\theoremstyle{plain}
\newtheorem{theorem}{Theorem}[section]
\newtheorem{conj}[theorem]{Conjecture}
\newtheorem{defi}[theorem]{Definition}
\newtheorem{exa}[theorem]{\it Example}
\newtheorem{lem}[theorem]{Lemma}
\newtheorem{prop}[theorem]{Proposition}
\newtheorem{rem}[theorem]{\it Remark}
\newcommand{\sect}[1]{\setcounter{equation}0\section{#1}}
\def\WH{W\times H}
\def\etal{\emph{et al.\ }}
\def\rec{1/\kern-2pt\sqrt3}
\def\tt{t}
\def\j{\\[-.7pt]}
\def\eqref#1{(\ref{#1})}
\def\ge{\geqslant}
\def\le{\leqslant}
\def\Re{\mathop{\mathrm{Re}}}
\def\y{\\[4pt]}
\def\fr#1#2{\hbox{\Large$\frac{#1}{#2}$}}
\def\ha{\frac12}
\def\sa{\frac16}
\def\oz{\overline{z}}
\def\cd{\ \hbox{\footnotesize$\bullet$}\ }
\def\tr{\mathop{\mathrm{tr}}}
\def\SO{\mathrm{SO}}
\def\SU{\mathrm{SU}}
\def\su{\mathfrak{su}}
\def\U{\mathrm{U}}
\def\alp{\alpha}
\def\Ga{\Gamma}
\def\om{\omega}
\def\si{\sigma}
\def\th{\theta}
\def\ds{\displaystyle}
\def\ts{\textstyle}
\def\ba{\begin{array}}
\def\ea{\end{array}}
\def\+{\!+\!}
\def\-{\!-\!}
\def\lra{\longrightarrow}
\def\ol{\overline}
\def\C{\mathbb{C}}
\def\CP{\mathbb{CP}}
\def\sH{\mathscr{H}}
\def\HP{\mathbb{HP}}
\def\LL{\mathbb{L}}
\def\R{\mathbb{R}}
\def\SS{\mathbb{S}}
\def\SSt{\mathbb{S}_\th}
\def\Z{\mathbb{Z}}
\def\uu{\mathbf{u}}
\def\vv{\mathbf{v}}
\def\w{\mathbf{w}}
\def\yy{\mathbf{y}}
\def\z{\mathbf{z}}
\def\sC{\mathscr{C}}
\def\sR{\mathscr{R}}
\def\sT{\mathscr{T}}
\begin{document}
\large

\parskip2pt
\parindent20pt
\mathsurround1pt

\def\tif#1{\fontsize{20}{24}\selectfont\sc #1}
\def\auf#1{\fontsize{16}{18}\selectfont\sc #1}
\def\add#1{\fontsize{12}{16}\selectfont\it #1}

\begin{center}

{\tif Surveying points in the complex projective plane}

\vskip15pt

{\auf Lane Hughston and Simon Salamon}


\vskip25pt

\begin{quote}\small
We classify SIC-POVMs of rank one in $\CP^2,$ or equivalently sets of
nine equally-spaced points in $\CP^2,$ without the assumption of group
covariance. If two points are fixed, the remaining seven must lie on a
pinched torus that a standard moment mapping projects to a circle in
$\R^3.$ We use this approach to prove that any SIC set in $\CP^2$ is
isometric to a known solution, given by nine points lying in
triples on the equators of the three 2-spheres each defined by the
vanishing of one homogeneous coordinate. We set up a system of
equations to describe hexagons in $\CP^2$ with the property that any
two vertices are related by a cross ratio (transition probability) of
$1/4.$ We then symmetrize the equations, factor out by the known
solutions, and compute a Gr\"obner basis to show that no SIC sets
remain. We do find new configurations of nine points in which 27 of
the 36 pairs of vertices of the configuration are equally spaced.
\end{quote}

\end{center}

\vskip10pt

\section*{Introduction}

A symmetric, informationally complete, positive-operator valued
measure or SIC-POVM on the Hermitian vector space $\C^n$ is a set
$\{P_j\}$ of $n^2$ rank-one projection operators such that
\begin{equation*}
\frac1n\!\sum_{i=1}^{n^2} P_i = I,
\end{equation*}
and 
\begin{equation*}
\tr(P_jP_k)=\frac1{n+1}(n\delta_{jk}+1)
\end{equation*}
for all $j,k.$ Such objects attracted wide attention following
conjectures about their existence made by Zauner \cite{Z} in 1999 and
Renes \etal \cite{RSC} in 2004, and since then have been investigated
by a large number of authors, along with higher rank versions and the
allied concept of mutually unbiased basis. See, for example,
\cite{App,AFZ,AYZ,DBBA,Durt,Gour,Grl,Hogg,Hugh,SG,Woo,Zhu}, and
references cited therein.

SIC-POVMs arise in the theory of quantum measurement (see Davies
\cite{Dav} and Holevo \cite{Hol} for the significance of general
POVMs), and are of great interest in connection with their potential
applications to quantum tomography. The idea is the following. Suppose
that one has a large number of independent identical copies of a
quantum system (say, a large molecule), the state (or `structure') of
which is unknown and needs to be determined. A SIC-POVM can be thought
of as a kind of symmetrically oriented machine that can be used to
make a single tomographic measurement on each independent copy of the
molecule, with the property that once the results of the various
measurements have been gathered for a sufficiently large number of
molecules, the state of the molecule can be efficiently determined to
a high degree of accuracy.  The `symmetric orientation' is not with
respect to ordinary three-dimensional physical space (as in the
classical tomography of medical imaging), but rather with respect to
the space of pure quantum states.

Since each element $P_i$ of a SIC-POVM is a matrix of rank one and
trace unity, it determines a point in complex projective space
$\CP^{n-1}.$ It is well known that a SIC-POVM can then be defined as a
configuration of $n^2$ points in $\CP^{n-1}$ that are mutually
equidistant under the standard K\"ahler metric \cite{LS,Wel}. This is
the definition that we shall adopt in \S\ref{HW}, and the distance is
determined by Lemma~\ref{CSI}. Such a set of points is often called a
`SIC', but we favour the expression `SIC set'.

The existence of such configurations (for example, nine equidistant
points in $\CP^2,$ or sixteen equidistant points in $\CP^3$) is
counterintuitive to our everyday way of thinking in which a regular
simplex in $\R^n$ has $n+1$ vertices (but see \cite{GKMS}). It has
been conjectured that $\CP^{n-1}$ possesses such a configuration for
every $n$ \cite{RSC,Z}. There is evidence for this for $n$ up to at
least $67,$ and various explicit solutions have been found in lower
dimensions. Most of the known SIC sets in higher dimensions are
constructed as orbits of a Heisenberg group $\WH$ acting on
$\CP^{n-1}$ (see Section~\ref{HW}), and representative vectors occur
as eigenvectors of an isometry that is an outer automorphism of $\WH.$
In the case $n=5$, the automorphisms of $\WH$ play a key role in the
construction of the celebrated Horrocks-Mumford bundle over $\CP^4$ in
\cite{HM}, which is an excellent reference for this group theory. In
the case $n=3$ (and more generally, when $n$ is prime) any finite
group of isometries whose orbit is a SIC set must be conjugate to
$\WH$ \cite{Zhu}, but in this paper we work without the assumption of
group covariance (see Grassl \cite{Grl}).

The space $\CP^1,$ endowed with the Fubini-Study metric, is isometric
to the standard two-sphere, and embedding this in $\R^3$ is a simple
example of the representation of $\CP^{n-1}$ as an adjoint orbit in
the Lie algebra $\su(n)$ of its isometry group. The existence of a SIC
set can then be interpreted as a statement about the placement of such
orbits. The problem can also be formulated so as to apply to more
general (co-)adjoint orbits in a Lie algebra.

The vertices of any inscribed regular tetrahedron in $S^2$ provide a
SIC set for $\CP^1$ ($n=2$). The situation for the projective plane
$\CP^2$ is already surprisingly intricate, and the case $n=3$ is
characterized by the existence of continuous families of non-congruent
SIC sets. It is easy to begin their study. Using homogeneous
coordinates, any three equally-spaced points on the equator
$\{[0,z_2,z_3]:|z_2|=|z_3|\}$ of the two-sphere $z_1=0$ lie in a SIC
set formed by adding three equally-spaced points from each of the
equators of the two-spheres $z_2=0$ and $z_3=0.$ If the diameter of
$\CP^2$ is chosen to be $\pi,$ all nine points are a distance $2\pi/3$
apart. Moreover, if the three triples match up so as to lie on a total
of twelve projective lines, the nine points are the flexes of a plane
cubic curve \cite{Hugh}.

In this paper, we show that any SIC set in $\CP^2$ is congruent to one
of those just described (see Theorem~\ref{strong}). This result will
not surprise the experts; it has perhaps been verified numerically,
and is apparently a consequence of computer-aided results in
\cite{Sz}. Our proof relies on a computation for its final step but is
predominantly analytical. We use the two-point homogeneity of $\CP^2$
to fix two points of a SIC set; applying the moment mapping relative
to a maximal torus shows that the remaining seven points lie in a
pinched torus above a circle $\sC$ in $\R^3$ (illustrated in
Figure~1). We exhibit the known solutions in a different form
(Proposition~\ref{7}) and characterize them by a symmetry condition
(Lemma~\ref{LR}). Adding three more points a distance $2\pi/3$ from
the first two and from each other leads to a polynomial equation that
is symmetric in three variables $x,y,z$ that represent the tangents of
angles measured around $\sC$ (Theorem~\ref{fa}). The resulting
geometry is illustrated in Section~\ref{Gi}.

Adding a sixth point allows us to write down four equations in four
variables $t,x,y,z.$ When these are totally symmetrized, we obtain a
system that represents a necessary condition for the six points to
form part of a SIC set. For the known solutions, at least one of the
four points on the pinched torus must project to $\sC$ with an angle
equal to $\pm\pi/6.$ This fact enables us to focus attention on the
so-called quotient ideal that parametrizes `extra' solutions, and to
describe it by means of an appropriate Gr\"obner basis. Once one root
$t$ is fixed, the extra solutions form a finite set and the final step
is to determine its size. There are too few extra solutions for these
to arise from an undiscovered SIC set.

This paper had its origins in a number of survey talks aimed at
bringing elements of the SIC-POVM problem in various low dimensions to
the attention of a wider audience, and the title and figures reflect
this.  We focus on the case $n=3$ from Section~\ref{mt} onwards, and
Sections~6--10 contain the more specialized material required to
achieve our goal. The Fubini-Study metric on an ambient projective
space plays a central role in the construction or approximation of
K\"ahler-Einstein metrics on algebraic varieties, and it is our hope
that more general theory may shed further light on the discrete
problem outlined above.

\sect{Hermitian preliminaries}

\noindent We begin with a few remarks to fix conventions. The complex
vector space
\begin{equation} 
\C^n=\{\z=(z_1,\ldots z_n)^\top:z_i\in\C\}
\end{equation}
of column vectors comes equipped with a Hermitian form
\begin{equation}\left<\w,\z\right> = \left<\w|\,\z\right> = 
\sum_{i=1}^n\ol w_iz_i\end{equation}
which is anti-linear in the first (bra) position. Each fixed $\w$
defines a linear functional $\z\mapsto\left<\w,\z\right>,$ and
\begin{equation} 
\w\ \mapsto\ \left<\w,\cd\right>
\end{equation} is an anti-linear
bijective mapping $h\colon V \to V^*,$ equivalently an
\emph{isomorphism} $V\cong\ol V^*$ of complex vector spaces.  Complex
projective space is the quotient
\begin{equation} 
\CP^{n-1}=\frac{\C^n\setminus 0}{\C^*},
\end{equation} 
consisting of one-dimensional subspaces of $\C^n$ or \emph{rays}, and
is a compact topological space. For any non-zero $\w\in\C^n$ the
associated point in $\CP^{n-1}$ will be denoted by $[\w].$ Each such
point determines a conjugate hyperplane $W$ defined by
\begin{equation} 
W=\mathbb{P}(\ker h(\w))\cong\CP^{n-2}\subset\CP^{n-1}.
\end{equation} This is
the geometrical content of the map $h.$ 

Two points $[\w],[\z]$ lie on a unique projective line
$\LL\cong\CP^1.$ The associated conjugate hyperplanes $W,Z$ intersect
  $\LL$ in $[\w'],[\z'],$ where
\begin{equation}
\w' = \left<\w,\z\right>\w-\left<\w,\w\right>\z,
\qquad
\z' = \left<\z,\z\right>\w-\left<\z,\w\right>\z.
\end{equation} 
The resulting four points, taken in the order $[\w],[\z],[\z'],[\w'],$
have inhomogeneous coordinates
\begin{equation} \infty,\quad 0,\quad
-\big<\z,\z\big>/\big<\z,\w\big>,\quad
-\left<\w,\z\right>/\big<\w,\w\big>,\end{equation}
and a real cross ratio
\begin{equation} 
\kappa([\w],[\z]) = 
\frac{\big<\w,\z\big>\big<\z,\w\big>}{\big<\w,\w\big>\big<\z,\z\big>}
= \frac{|\big<\w,\z\big>|^2}{\|\w\|^2\|\z\|^2}\in[0,1].
\end{equation} 
When the points of $\CP^{n-1}$ are interpreted as pure quantum states,
$\kappa$ can be regarded as a transition probability \cite{AshS,
  BH,Gib, Hugh2,Hugh3}. The Fubini-Study distance $d$ between the
points $[\w]$ and $[\z]$ is defined by expressing the cross ratio as
$\cos^2(d/2),$ so that
\begin{equation}\label{FS}
d([\w],[\z]) = 
2\arccos\left(\frac{|\big<\w,\z\big>|}{\|\w\|\|\z\|}\right)\in[0,\pi].
\end{equation}
When $n=2$ we get $\CP^1\cong S^2.$ We shall see in Example~\ref{abc}
that $d$ is the spherical distance
\begin{equation}
\th=\arccos\big|\big<\uu,\vv\big>\big|,\qquad \uu,\vv\in S^2,
\end{equation}
measuring the arclength of a great circle joining $\uu$ and $\vv.$ The
$\CP^1$ calculation confirms that $d$ is the usual distance measured
along geodesics of $\CP^{n-1}$ since any two points of the latter lie
on a unique projective line $\CP^1.$ The distance \eqref{FS} satisfies
the triangle inequality
\begin{equation} 
d([\w],[\z])\le d([\w],[\yy])+d([\yy],[\z]).
\end{equation} 
This can be verified by working inside the $\CP^2$ that contains
$[\w],$ $[\yy],$ $[\z].$

The so-called Fubini-Study metric is the square $ds^2$ of the
infinitesimal distance between $[\z]$ and $[\z+d\z],$ computed using
\begin{equation}\label{taylor}
\ba{rcl}
 \kappa([\z],[\z+d\z]) &=& \ds
\frac{\|\z\|^2+2\Re\big<\z,d\z\big>+|\big<\z,d\z\big>|^2/\|\z\|^2}
{\|\z\|^2+2\Re\big<\z,d\z\big>+\|d\z\|^2}\\[20pt]
&=& \ds
1-\frac{\|d\z\|^2}{\|\z\|^2}+\frac{|\big<\z,d\z\big>|^2}{\|\z\|^4}
+ O(\|d\z\|^3).
\ea
\end{equation}
There are no first-order terms, and we obtain the Riemannian metric
$g=ds^2$ where
\begin{equation} 
ds^2\ =\ 4\frac{\|\z\|^2\|d\z\|^2-|\big<\z,d\z\big>|^2}{\|\z\|^4}.
\end{equation}
If we set $z_n=1,$ and use the summation convention over the remaining
indices $z_1,\ldots,z_{n-1},$ then in the traditional notation we have
\begin{equation} 
g_{\alp\beta}dz^\alp d\ol z^\beta\ =\ 
4\frac{(\ol z_\alp z^\alp+1)dz_\beta d\ol z^\beta-
\ol z_\alp z_\beta dz^\alp d\ol z^\beta}{(\ol z_\alp z^\alp+1)^2}.
\end{equation}
See, for example, Arnold \cite{Arn} and Kobayashi and Nomizu
\cite{KN}. When $n=2,$ we obtain the classical first fundamental form
\begin{equation} 
ds^2\ =\ \frac{4\,dz d\ol z}{(1+|z|^2)^2}\ =\
\frac{8(dx^2+dy^2)}{(1+x^2+y^2)^2}
\end{equation} 
on the two-sphere $S^2,$ in which $x,y$ are isothermal coordinates.

\sect{The special unitary group}

\noindent The Hermitian form $h$ is invariant under the action of the
unitary group
\begin{equation} 
\U(n)=\{X\in\C^{n,n}:\ol X\!X^\top=I\}.
\end{equation}
Its centre consists of scalar multiples $e^{it}I$ that act trivially
on $\CP^{n-1}.$ So we consider the special unitary group
\begin{equation} 
\SU(n)=\{X\in\U(n):\det X=1\},
\end{equation}
whose centre is $\Z_n=\big<e^{2\pi i/n}I\big>.$ The next result is due
to Wigner \cite{Wig}; a modern treatment is given in \cite{Fr}.

\begin{theorem} 
The isometry group of the Fubini-Study space $\CP^{n-1},$ i.e.\ the
group of bijections preserving the distance $d,$ is generated by
$\SU(n)/\Z_n$ and $[\z]\mapsto[\ol\z].$
\end{theorem}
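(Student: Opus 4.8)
The plan is to verify that the stated generators are isometries, and then prove that nothing else can occur by a rigidity-plus-lifting argument. First I would check the easy direction: every $X\in\SU(n)$ preserves the Hermitian form $h$, hence preserves $|\langle\w,\z\rangle|/(\|\w\|\,\|\z\|)$, and therefore preserves $d$ by formula~\eqref{FS}; since scalars act trivially, the action descends to $\SU(n)/\Z_n$. Likewise $[\z]\mapsto[\ol\z]$ sends $\langle\w,\z\rangle$ to $\overline{\langle\w,\z\rangle}=\langle\ol\w,\ol\z\rangle$ up to conjugation, so it preserves the modulus $|\langle\w,\z\rangle|$ and the norms, hence preserves $d$. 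So the group generated by these maps sits inside $\mathrm{Isom}(\CP^{n-1})$.

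For the converse, let $f$ be an arbitrary isometry. The key structural fact is that $\CP^{n-1}$ with the Fubini-Study metric is a compact irreducible Hermitian symmetric space, so its isometry group has two components and the identity component is exactly the holomorphic (and anti-holomorphic maps form the other coset). I would argue more elementarily, though, staying within the paper's toolkit: projective lines $\CP^1\cong S^2$ are precisely the totally geodesic $2$-spheres of maximal curvature, so $f$ permutes projective lines; hence $f$ is induced by a collineation of $\CP^{n-1}$, i.e.\ by the fundamental theorem of projective geometry $f$ comes from a semilinear map $A\colon\C^n\to\C^n$, semilinear with respect to a field automorphism $\tau$ of $\C$. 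Preservation of the real-valued cross ratio $\kappa$ of~(1.8) forces $\tau$ to fix $\kappa([\w],[\z])\in\R$ for all configurations; choosing coordinates this pins $\tau$ down to either the identity or complex conjugation (the only field automorphisms of $\C$ that are ``measurable'' enough to respect the metric — concretely, $\tau$ must be continuous on the values of $\kappa$, forcing $\tau|_\R=\mathrm{id}$ and then $\tau(i)=\pm i$). Composing with $[\z]\mapsto[\ol\z]$ if necessary, we reduce to the case where $f$ is induced by a genuine $\C$-linear invertible $A$.

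It then remains to show a $\C$-linear $A$ inducing an isometry is, up to a scalar, unitary. Here I would use that for all $\w,\z$,
\begin{equation*}
\frac{|\langle A\w,A\z\rangle|^2}{\|A\w\|^2\|A\z\|^2}
=\frac{|\langle\w,\z\rangle|^2}{\|\w\|^2\|\z\|^2}.
\end{equation*}
Taking $\z=\w$ gives nothing; taking orthogonal pairs shows $A$ maps $h$-orthogonal lines to $h$-orthogonal lines, so $A$ preserves orthogonality. A linear map preserving orthogonality for a definite form is a scalar multiple of an isometry of that form (polarize: if $\langle\w,\z\rangle=0\Rightarrow\langle A\w,A\z\rangle=0$, then the two Hermitian forms $\langle\cdot,\cdot\rangle$ and $\langle A\cdot,A\cdot\rangle$ have the same null pairs, and a standard argument forces $\langle A\w,A\z\rangle=\lambda\langle\w,\z\rangle$ for a constant $\lambda>0$). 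Hence $A=\sqrt{\lambda}\,U$ with $U$ unitary, and rescaling $U$ by a phase puts $\det U=1$, so $f$ comes from $\SU(n)/\Z_n$. Combined with the possible conjugation factored out earlier, this gives exactly the asserted generating set.

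The main obstacle is the middle step: ruling out exotic field automorphisms of $\C$ and, relatedly, justifying the passage from ``isometry'' to ``collineation''. Wild automorphisms of $\C$ exist in abundance, so one genuinely needs to exploit continuity/metric information — the cleanest route is to observe that $f$, being a distance isometry, is a homeomorphism, and a homeomorphic collineation of $\CP^{n-1}$ ($n\ge 3$) must come from a \emph{continuous} semilinear map, whose associated field automorphism is continuous, hence $\mathrm{id}$ or conjugation. (For $n=2$ the collineation argument degenerates since every bijection of $\CP^1=S^2$ permutes the unique ``line'', so one argues directly that an isometry of the round $S^2$ lies in $\mathrm{O}(3)$, matching $\SU(2)/\Z_2\cong\SO(3)$ together with an orientation-reversing reflection realized by $[\z]\mapsto[\ol\z]$.) I would cite Wigner~\cite{Wig} and the modern account~\cite{Fr} for the technical core and present the above as the conceptual skeleton.
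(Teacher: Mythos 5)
The paper does not actually prove this theorem: it states it as Wigner's result and refers to \cite{Wig} and \cite{Fr}, so there is no internal proof to compare yours against line by line. Your sketch is the standard argument and is essentially sound: the forward direction is immediate, and for the converse your decomposition (isometry $\Rightarrow$ collineation $\Rightarrow$ semilinear map by the fundamental theorem of projective geometry for $n\ge3$ $\Rightarrow$ continuity excludes wild automorphisms of $\C$ $\Rightarrow$ an orthogonality-preserving linear map is a scalar multiple of a unitary) is the right skeleton, and you correctly isolate the two delicate points, namely the wild automorphisms and the degenerate case $n=2$, which you handle by the classical $\mathrm{O}(3)$ argument. One step I would tighten: the claim that projective lines are ``the totally geodesic $2$-spheres of maximal curvature'' presupposes that a distance-preserving bijection is smooth (Myers--Steenrod) before curvature and geodesics make sense. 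A cleaner, purely metric route within the paper's own toolkit is to note from \eqref{FS} that the conjugate hyperplane $W$ of a point $[\w]$ is exactly the set $\{q: d([\w],q)=\pi\},$ so any distance isometry permutes hyperplanes; since the line through two points is the intersection of all hyperplanes containing both, $f$ permutes projective lines and the fundamental theorem of projective geometry applies directly. Your final polarization step is fine: preservation of orthogonality gives $\ol A^\top\! A\,\w\in\C\w$ for every $\w,$ hence $\ol A^\top\! A=\lambda I$ with $\lambda>0,$ and a phase adjustment of $U=\lambda^{-1/2}A$ puts the determinant equal to $1,$ landing you in $\SU(n)/\Z_n$ up to the conjugation you factored out.
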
 

The Lie algebra $\su(n)$ can (as a vector space) be defined as the
tangent space $T_I\SU(n)$ at the identity. It consists of tangent
vectors $A=\dot X_0$ to curves $X_t=I+tA+O(t^2)$ in $U(n).$ Thus
\begin{equation}
\su(n)=\{A\in \C^{n,n}:\ol A+A^\top=0,\ \tr A=0\}.
\end{equation}
A matrix $M\in\SU(n)$ acts on $\su(n)$ by the adjoint representation
\begin{equation} 
A\mapsto M\!AM^{-1}=M\!AM^\top.
\end{equation}
The space $\su(n)$ carries an invariant inner product
\begin{equation} \big<A,B\big>=-\tr(AB),\end{equation}
and $\SU(n)$ itself carries a bi-invariant Riemannian invariant.
We shall work with the corresponding affine space
\begin{equation}
\sH_n=\{A\in \C^{n,n}: \ol A=A^\top,\ \tr A=1\}
\end{equation}
of Hermitian matrices of trace one. There is an obvious bijection
\begin{equation}\label{obv}
\sH_n\ \stackrel\cong\lra\ \su(n),
\end{equation} 
given by $A\mapsto i(A-n^{-1}I).$

\bigbreak

The \emph{canonical embedding} of $\CP^{n-1}$ into $\sH_n$ is a
variant of the moment mapping for the adjoint action of $\SU(n).$ To
describe it, assume for convenience that all vectors are
normalized. Thus, we set $\|\z\|=1$ ($\z\in S^{2n-1}$) and there
remains only a phase ambiguity in passing to a point $[\z]=[e^{it}\z]$
of $\CP^{n-1}.$ Map $[\z]$ to

\begin{equation}\label{can}
P_\z = \z\ol\z^\top = \ts
\left(\ba{l}
|z_1|^2 \quad z_1\ol z_2 \quad z_1\ol z_3\cdots\\
z_2\ol z_1 \quad |z_z|^2 \quad z_2\ol z_3\cdots\\
z_3\ol z_1\quad \cdots\\
\cdots\quad \cdots\ea\right),
\end{equation}
which is a projection operator (meaning $P^2=P$) of rank one. 
The injective map 
\begin{equation}\label{i}
i\colon\CP^{n-1}\hookrightarrow\sH_n
\end{equation} 
defined by $[\z]\mapsto P_\z$ is $\SU(n)$-equivariant. We can use it
to measure distances since
\begin{equation}
\kappa([\w],[\z]) = \big|\big<\w,\z\big>\big|^2 = \tr(P_\w P_\z),
\end{equation}
assuming $\|\z\|=1=\|\w\|.$ Moreover, the derivative
\begin{equation}
i_*\colon T_x\CP^{n-1}\hookrightarrow T_x\sH_n\cong\R^N
\end{equation} 
is $\U(n-1)$-equivariant, and \eqref{i} is an isometric embedding.

\begin{exa} [The Bloch sphere] \label{abc} \rm
For $n=2,$ the image of this map consists of
the matrices
\begin{equation} \left(\!\ba{cc} 
|z_1|^2 & \ol z_1z_2\\[3pt] z_1\ol z_2 & |z_2|^2\ea\!\right) =
\fr12\!\left(\!\ba{cc} 1+a & b+ic\\[2pt] b-ic &
1-a\ea\!\right)\end{equation} with $|z_1|^2+|z_2|^2=1$ and
$a^2+b^2+c^2=1.$ This provides the well-known isomorphism $\CP^1\cong
S^2.$ The angle $\th$ between two unit vectors in $\R^3$ is given by
\begin{equation} 
aa'+bb'+cc'=\cos\th.
\end{equation}
The inner product in $\sH_2$ is then
\begin{equation} 
\ts \fr12(1+aa'+bb'+cc')=\fr12(1+\cos\th)=\cos^2(\th/2).
\end{equation}
But $\th$ is also the standard distance $d$ along the great circle on
the surface of the sphere joining the endpoints of the two unit
vectors.
\end{exa}

In the example above, fix (say) the north pole $p\in S^2,$ and
consider the function $\kappa_p=\sin^2(\th/2)$ where $\th$ is now
latitude in radians. Its gradient $\nabla\kappa_p$ is tangent to the
meridians joining $p$ to the south pole $p',$ whereas
$J(\nabla\kappa_p)$ is a vector field that represents rotation about
$pp'.$ This situation is generalized to higher dimensions as follows.
The composition
\begin{equation}
\CP^{n-1}\ \lra\ \sH_n\ \stackrel\cong\lra\ \su(n),
\end{equation} 
where $[\z]\mapsto i(P_\z-n^{-1}I),$ is a \emph{moment mapping} of the
type determined whenever a Lie group acts on a symplectic
manifold. The image (isomorphic to $\CP^{n-1}$) inside $\su(n)$ is an
orbit for the action of $\SU(n).$ Any such adjoint orbit carries a
K\"ahler metric by general principles. Fix a point
$p=[\z]\in\CP^{-1},$ and consider the function $\kappa_p$ defined by
\begin{equation} 
\kappa_p([\w])=\kappa([\z],[\w])=\tr(P_\z P_\w).
\end{equation}
We have

\begin{prop} 
The rotated gradient $J(\nabla\kappa_p)$ is the infinitesimal isometry
(Killing field) associated to $i(P_\z-n^{-1}I).$
\end{prop}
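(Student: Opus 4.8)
The plan is to recognise the statement as the specialisation, to this particular moment mapping, of two standard facts: the defining property of a moment map, and the K\"ahler identity linking Hamiltonian vector fields with rotated gradients. Write $\omega$ for the Fubini--Study K\"ahler form, normalised so that $\omega(X,Y)=g(X,JY)$. The map $\mu\colon\CP^{n-1}\to\su(n)$, $[\w]\mapsto i(P_\w-n^{-1}I)$, being the moment mapping for the adjoint action, satisfies $\iota_{A^*}\omega=d\big<\mu,A\big>$ for every $A\in\su(n)$, where $A^*$ denotes the infinitesimal isometry generating the flow of $\exp(tA)$.

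First I would identify $\kappa_p$ with the component of $\mu$ in the direction $A=i(P_\z-n^{-1}I)$. Using the invariant inner product $\big<B,C\big>=-\tr(BC)$ on $\su(n)$ together with $\tr P_\w=\tr P_\z=1$ and $\tr I=n$, a one-line computation gives
\begin{equation*}
\big<\mu([\w]),A\big> = \tr\big((P_\w-n^{-1}I)(P_\z-n^{-1}I)\big) = \tr(P_\w P_\z)-\frac1n = \kappa_p([\w])-\frac1n ,
\end{equation*}
so that $d\big<\mu,A\big>=d\kappa_p$ and the moment-map relation becomes $\iota_{A^*}\omega=d\kappa_p$. Next I would invoke the K\"ahler identity: with $\omega(X,Y)=g(X,JY)$, $g(\nabla f,\cdot)=df$, and $J$ skew for $g$, the equation $\iota_X\omega=df$ forces $X=J(\nabla f)$; applied with $f=\kappa_p$ and $X=A^*$ this yields $A^*=J(\nabla\kappa_p)$, which is the assertion.

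The one delicate point is the bookkeeping of the scalar constants tying together $g$, $\omega$ and the chosen inner product on $\su(n)$: the constant relating $A^*$ and $J(\nabla\kappa_p)$ is a positive number which, in the conventions adopted here, is to equal $1$. I would cross-check this, and the sign, against Example~\ref{abc}, where $\nabla\kappa_p$ is tangent to the meridians through $p$ while $J(\nabla\kappa_p)$ generates rotation about the polar axis $pp'$ --- visibly the Killing field of the corresponding element of $\su(2)$. I expect this normalisation bookkeeping, rather than any conceptual difficulty, to be the main obstacle, since the underlying symplectic facts are entirely standard.

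A self-contained alternative avoids the symplectic language. Since $i$, $g$ and $J$ are $\SU(n)$-invariant and both sides transform under $M\in\SU(n)$ by $A^*\mapsto(MAM^{-1})^*$, it suffices to treat $p=[1,0,\dots,0]$. In the affine chart $u_j=w_{j+1}/w_1$ one has $\kappa_p=(1+\sum_j|u_j|^2)^{-1}$; inverting the Fubini--Study metric --- whose inverse is a rank-one modification of a multiple of the identity, so that the Sherman--Morrison formula applies --- gives $\nabla\kappa_p$ as a multiple of $-\sum_j u_j\partial_{u_j}-\sum_j\bar u_j\partial_{\bar u_j}$, and applying $J$ produces a multiple of $-i\sum_j u_j\partial_{u_j}+i\sum_j\bar u_j\partial_{\bar u_j}$. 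But this last field generates the flow $u_j\mapsto e^{-it}u_j$, which is precisely the action on this chart of $\exp(t\,i(P_\z-n^{-1}I))$; as before, only the scalar must be pinned down.
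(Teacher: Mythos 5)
The paper states this proposition without proof --- it is placed immediately after the remark that $[\z]\mapsto i(P_\z-n^{-1}I)$ is a moment mapping, and is in effect the defining property of that moment map --- so your argument, which unpacks exactly that property via the computation $\langle\mu([\w]),i(P_\z-n^{-1}I)\rangle=\tr(P_\w P_\z)-\tfrac1n$ together with the K\"ahler identity $\iota_{J\nabla f}\,\omega=df$, is the intended one and is correct. The only loose end is the one you flag yourself, the overall positive normalisation constant tying $g$, $\omega$ and $-\tr(AB)$ together, and your proposed cross-check against the Bloch-sphere picture (where the paper's preceding paragraph already identifies $J(\nabla\kappa_p)$ with the rotation field about $pp'$) is the right way to settle it.
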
 

For further details of various aspects of the K\"ahlerian geometry of
the space of pure quantum states, see Anandan and Aharonov \cite{AA},
Ashtekar and Schilling \cite{AshS}, Bengtsson and Zyczkowski
\cite{BZ}, Brody and Hughston \cite{BH}, Gibbons \cite{Gib}, Hughston
\cite{Hugh2,Hugh3}, and Kibble \cite{Kib}.

\sect{Sets of points in projective space}\label{HW}

\noindent We choose to begin with

\begin{defi} 
A SIC-POVM or SIC set is a collection $\SS$ of $n^2$ points $[\z_i]$
in $\CP^{n-1}$ that are mutually equidistant, so if $\|\z_i\|=1$ then
\begin{equation}
|\big<\z_i,\z_j\big>|^2=\kappa,\qquad i\ne j,
\end{equation}
for some fixed cross ratio $\kappa\in[0,1).$
\end{defi}

We can associate to $[\z_i]$ the point $P_i=P_{[\z_i]}$ in $\sH_n.$ A SIC set then
consists of a regular simplex embedded in 
\begin{equation} \sH_n\cong\su(n)\cong\R^N,\quad N=n^2\-1\end{equation}
with $n^2$ vertices $\{P_i\}$ that lie in the adjoint orbit
$\CP^{n-1}.$ The latter requirement is the crucial one, since a
regular simplex with $n^2$ vertices in $\R^N$ is readily obtained by
projecting an arbitrary orthonormal basis of $\R^{N+1}.$

Do SIC sets exist? 

\begin{exa}\rm
  A SIC set in $\CP^1=S^2$ is an inscribed tetrahedron in the
  two-sphere. Any two such tetrahedrons are congruent by
  $\SO(3)=\SU(2)/\Z_2,$ though that does not stop us seeking the
  `neatest' set of vertices to write down. One set is
\begin{equation}\Big\{\ [0,1],\quad [\sqrt2,1],\quad
[\sqrt2,\om],\quad [\sqrt2,\om^2]\ \Big\},\end{equation} where
$\om=e^{2\pi i/3}.$ Another set of vertices, which is perhaps less obvious, is
\begin{equation}\label{tet2}
\Big\{\ [1,\varpi],\quad [\varpi, 1],\quad
[1,-\varpi],\quad [\varpi,-1]\ \Big\},
\end{equation} 
where $\varpi=(1+i)/(1+\sqrt3).$ This second set nevertheless plays an
important role, as we shall see.
\end{exa}

If $n\ge3,$ any two SIC sets in $\CP^{n-1}\subset\R^N$ are congruent by
$\SO(N)$ (where $N=n^2-1$), but \emph{not} in general by $\SU(n).$

One can present more SIC sets by generalizing the second tetrahedron
\eqref{tet2}. We define two cyclic groups of order $n.$ Let $W$ be the
group generated by the cyclic permutation
\begin{equation}\label{cyc} 
[z_1,z_2,\ldots,z_n]\mapsto[z_n,z_1,\ldots,z_{n-1}];
\end{equation}
let $\om=e^{2\pi i/n},$ and denote by $H$ the group generated by
\begin{equation}\label{run} 
[z_1,z_2,\ldots,z_n]\mapsto[z_1,\om z_2,\ldots,\om^{n-1}z_n].
\end{equation}
$\WH$ acts on $\CP^{n-1}$ as a subgroup of $\SU(n)$ isomorphic to
$\Z_n\times\Z_n.$ This subgroup is sometimes called the
Weyl-Heisenberg group after \cite{Weyl}. It can be regarded as the
projectivization of an extended finite group, namely the Heisenberg
group of three-by-three matrices with coefficients in the ring $\Z_n.$
For this reason, it is legitimate to refer to the action of $\WH$
simply as that of the \emph{Heisenberg group}. The following two
results can be verified by direct calculation:

\begin{prop}\label{011}
The orbit
\begin{equation} 
(\WH)\cdot[0,1,1]
\end{equation} 
is a SIC set consisting of nine points in $\CP^2.$
\end{prop}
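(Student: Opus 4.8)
The plan is to verify directly that the orbit consists of nine mutually equidistant points, arranging the arithmetic so as to use the symmetry of the orbit. First note that $\WH\subset\SU(3)$: the generator~\eqref{cyc} of $W$ is the permutation matrix of a $3$-cycle, which has determinant $+1$, while the generator~\eqref{run} of $H$ is $\mathrm{diag}(1,\om,\om^2)$, of determinant $\om^3=1$. Hence $\WH$ acts on $\CP^2$ by isometries, and in particular it preserves the cross ratio $\kappa$. Writing $\z_0=(0,1,1)^\top$, it follows that for any two orbit points $g\cdot[\z_0]$ and $g'\cdot[\z_0]$,
\begin{equation*}
\kappa\big(g\cdot[\z_0],\,g'\cdot[\z_0]\big)=\kappa\big([\z_0],\,g^{-1}g'\cdot[\z_0]\big),
\end{equation*}
and when the two points are distinct, $g^{-1}g'\cdot[\z_0]$ is an orbit point different from $[\z_0]$. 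So it is enough to list the orbit and then to evaluate $\kappa$ between $[\z_0]$ and each of the other orbit points.

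Applying $H$ to $\z_0$ gives $(0,\om^b,\om^{2b})^\top$ for $b=0,1,2$, and $W$ cyclically permutes coordinates; thus the orbit is represented by the nine vectors $W^aH^b\z_0$, $a,b\in\{0,1,2\}$, each of squared norm $0+1+1=2$. Reading off, for each of these vectors, which coordinate vanishes and what the ratio of the two nonzero coordinates is, shows that they represent nine \emph{distinct} points of $\CP^2$, three of them on each of the lines $z_1=0$, $z_2=0$, $z_3=0$; this is the configuration described in the Introduction.

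Finally, using $W(0,\om^b,\om^{2b})^\top=(\om^{2b},0,\om^b)^\top$ and $W^2(0,\om^b,\om^{2b})^\top=(\om^b,\om^{2b},0)^\top$, one computes that $\langle\z_0,\,W^aH^b\z_0\rangle$ equals $\om^b+\om^{2b}$, $\om^b$, or $\om^{2b}$ according as $a=0,1,2$. For $a=1$ or $2$ this has modulus $1$ for every $b$; for $a=0$ and $b=1,2$ it equals $-1$ because $1+\om+\om^2=0$, again of modulus $1$ (the remaining case $a=b=0$ gives modulus $2$ and is the base point itself). Since all nine vectors have squared norm $2$, the corresponding unit vectors satisfy $|\langle\z_i,\z_j\rangle|^2=1/4$ for every $i\neq j$, so the orbit is a SIC set with $\kappa=1/4\in[0,1)$. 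The argument involves no genuine difficulty; the two things to be careful about are confirming that the orbit has nine distinct points, and the passage — via $\SU(3)$-equivariance — from all $36$ pairs to the eight inner products computed from $[\z_0]$.
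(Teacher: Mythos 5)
Your proof is correct and is essentially the direct calculation that the paper invokes (it states only that the result ``can be verified by direct calculation'' and gives no further detail). Your use of $\SU(3)$-equivariance to reduce the $36$ pairs to the eight inner products $\langle\z_0,W^aH^b\z_0\rangle$, together with the explicit check that these all have modulus $1$ while $\|\z_0\|^2=2$, is a clean and complete way to carry that verification out.
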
 

\begin{prop} 
Let $r=\sqrt2$ and $s=\sqrt{2+\sqrt5}.$ Then
\begin{equation}
(\WH)\cdot[-s-i(r\+s),\ 1\-r+i,\ s+i(s\-r),\ 1+r+i]
\end{equation}
is a SIC set of sixteen points in $\CP^3.$
\end{prop}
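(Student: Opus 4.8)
The plan is to verify the claim by direct computation, exactly as the text suggests (``can be verified by direct calculation''). First I would fix the starting vector
\[
\z_0 = (-s-i(r\+s),\ 1\-r+i,\ s+i(s\-r),\ 1+r+i)^\top,\qquad r=\sqrt2,\ s=\sqrt{2+\sqrt5},
\]
and compute $\|\z_0\|^2 = \sum_{k=1}^4 |(\z_0)_k|^2$. Each modulus-squared is a rational expression in $r$ and $s$; using $r^2=2$ and $s^2 = 2+\sqrt5$ (so that $s^4 = 9+4\sqrt5$ and $\sqrt5 = s^2-2$) one reduces everything to the form $a+b\sqrt5$ with $a,b\in\Q$. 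After collecting terms the irrational part should cancel, leaving $\|\z_0\|^2$ a concrete rational number; call it $M$. The normalized vector is then $\z_0/\sqrt M$, and all subsequent inner products carry an overall factor $1/M$.

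Next I would enumerate the sixteen points. The group $\WH$ is generated by the shift operator $W$ of \eqref{cyc} and the phase operator $H$ of \eqref{run} with $\om = e^{2\pi i/4} = i$; since $W^4 = I = H^4$ (projectively) and $WH = i\,HW$, the orbit of $[\z_0]$ consists of the sixteen rays $[W^a H^b \z_0]$ for $a,b \in \{0,1,2,3\}$. Concretely $W^a H^b$ sends $\z_0$ to the vector whose $k$-th entry is $i^{b(k-a)}(\z_0)_{k-a}$, indices read mod $4$. Because $\WH \subset \SU(4)$ acts by isometries of the Fubini--Study metric, it suffices by homogeneity to check that
\[
|\big<\z_0,\ W^a H^b\z_0\big>|^2 = \kappa\,M^2
\]
is the \emph{same} value for every $(a,b) \neq (0,0)$ — this is the single orbit through $[\z_0]$, so pairwise distances are governed by the sixteen numbers $\big<\z_0,\,W^aH^b\z_0\big>$ and I only need each $(a,b)\neq(0,0)$ to give modulus$^2$ equal to $\kappa M^2$, with $\kappa = 1/(n+1) = 1/5$ for $n=4$. (I should also confirm the sixteen rays are distinct, i.e.\ no $W^aH^b$ with $(a,b)\neq(0,0)$ fixes $[\z_0]$, which follows once $\kappa \neq 1$.)

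The computation itself: $\big<\z_0,\,W^aH^b\z_0\big> = \sum_{k} i^{b(k-a)}\,\ol{(\z_0)_k}\,(\z_0)_{k-a}$. For each of the fifteen nontrivial $(a,b)$ this is a sum of four products of the entries of $\z_0$, each product again reducible to $p+q\sqrt5$ with $p,q\in\Q[i]$ after substituting $r^2=2$, $s^2=2+\sqrt5$. One then forms $|{\cdot}|^2$ by multiplying by the conjugate (also substituting $\sqrt5$ where it appears under $\ol{\phantom x}$, noting $\sqrt5$ is real) and checks the result equals $M^2/5$ in all fifteen cases. I would organize this by first computing the sixteen ``Gram entries'' $\ol{(\z_0)_j}(\z_0)_k$ once, then assembling each inner product as a weighted sum of those. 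The main obstacle is purely bookkeeping: keeping the nested surds $\sqrt2$ and $\sqrt{2+\sqrt5}$ straight through roughly sixty complex-number multiplications without arithmetic slips. There is no conceptual difficulty — the identity $WH = iHW$ together with $\WH \subset \SU(4)$ guarantees the orbit structure, and equidistance is then the single numerical fact $|\big<\z_0, g\z_0\big>|^2 = \|\z_0\|^4/5$ for all $g \in \WH \setminus \{I\}$ — so the proof reduces to exhibiting that one number, most cleanly by a short symbolic computation in $\Q(i,\sqrt2,\sqrt5)$.
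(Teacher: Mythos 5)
Your proposal is correct and is essentially the approach the paper intends: the paper offers no written proof, stating only that the result "can be verified by direct calculation," and your plan (reduce to the fifteen inner products $\big<\z_0,W^aH^b\z_0\big>$ via the isometric group action, then verify each has modulus squared $\|\z_0\|^4/5$ by symbolic arithmetic in $\mathbb{Q}(i,\sqrt2,\sqrt5)$) is precisely that calculation, correctly organized. The minor phase discrepancy in your formula for $(W^aH^b\z_0)_k$ is an overall scalar and is harmless projectively.
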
 

\noindent An element $\z\in\C^n$ such that the orbit $(\WH) \cdot[\z]$
is a SIC set is called a \emph{fiducial vector} for the action of
$\WH.$\smallbreak

In his 1999 Vienna PhD thesis \cite{Z}, Zauner made a number of 
conjectures that extended the basic
 
\begin{conj}
$\CP^{n-1}$ possesses a SIC set for all $n.$
\end{conj}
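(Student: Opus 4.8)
The statement is Zauner's conjecture, which remains open; what follows is therefore a research programme, not a proof we can close here, and I shall be explicit about where it stalls. The plan is to trade the problem of placing $n^2$ mutually equidistant points for the far more economical task of producing a single \emph{fiducial vector} $\mathbf{f}\in\C^n$ for the Heisenberg group $\WH,$ in the sense defined above. Because $\WH$ acts on $\CP^{n-1}$ by isometries with an orbit of $n^2$ distinct points through a generic point, and because $\kappa$ is $\SU(n)$-invariant, the distance between two orbit points $[W^{a_1}H^{b_1}\mathbf{f}]$ and $[W^{a_2}H^{b_2}\mathbf{f}]$ depends only on the difference $(a_2\-a_1,b_2\-b_1)$ in $\Z_n\times\Z_n.$ Hence $(\WH)\cdot[\mathbf{f}]$ is a SIC set as soon as the single system
\begin{equation}
\bigl|\langle\mathbf{f},\,W^aH^b\mathbf{f}\rangle\bigr|^2=\frac1{n+1},\qquad (a,b)\neq(0,0),\ \ \|\mathbf{f}\|=1,
\end{equation}
is solvable; this is Zauner's stronger, group-covariant form of the conjecture, of which Proposition~\ref{011} is the case $n=3.$

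First I would record these as real polynomial equations in the $2n$ real coordinates of $\mathbf{f}$ modulo an overall phase, a system whose size grows only polynomially in $n.$ The decisive step is then to \emph{exhibit} a solution uniformly in $n$ rather than merely argue for its existence, and here the plan follows the algebraic programme initiated by Appleby and collaborators \cite{App,AYZ}. Exact computations across a wide range of dimensions indicate that the entries of $\mathbf{f}$ are algebraic numbers lying in specific abelian extensions of the real quadratic field $\mathbb{Q}(\sqrt D),$ with $D$ the squarefree part of $(n\-3)(n\+1),$ and that their Galois structure is predicted by the Stark conjectures. The strategy is to write the fiducial entries explicitly as values built from Stark units attached to ray class characters of $\mathbb{Q}(\sqrt D),$ and then to verify directly that the resulting $\mathbf{f}$ satisfies the displayed system for all~$n.$

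The hard part is exactly this construction, and it is where the programme presently halts. Producing the entries of $\mathbf{f}$ explicitly for \emph{every} $n$ amounts to a canonical analytic description of generators of abelian extensions of real quadratic fields---that is, to Hilbert's twelfth problem for such fields, which is open---and the Stark conjectures underpinning any such description are themselves unproven in the generality required. So the reduction to $\WH$-covariance does not dispose of Zauner's conjecture; it converts it into a question at least as deep in analytic number theory, and no construction valid for all $n$ is known, only infinitely many individually verified dimensions. I would therefore treat the number-theoretic step as the principal obstacle and not a routine verification. A purely variational alternative---minimising the frame potential $\sum_{i,j=1}^{n^2}|\langle\z_i,\z_j\rangle|^4$ over configurations of $n^2$ unit vectors, a quantity bounded below by the Welch bound with equality exactly for SIC sets---meets the mirror-image difficulty: proving the bound is attained is tantamount to the existence statement itself, so it supplies a clean reformulation but no independent leverage.
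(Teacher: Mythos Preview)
Your reading is correct: the statement is presented in the paper as a conjecture, not a theorem, and no proof is offered there. The paper simply records it as Zauner's basic conjecture, notes the extensive numerical and exact evidence in low dimensions, and then restricts attention to $n=3$ for the remainder. So there is nothing to compare your attempt against; you have rightly flagged that the problem is open and that any ``proof'' would amount to resolving a major outstanding question.

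Your outline of the standard reduction to a Heisenberg fiducial and the subsequent number-theoretic programme (ray class fields of $\mathbb{Q}(\sqrt{(n-3)(n+1)})$, Stark units, the link to Hilbert's twelfth problem) is an accurate summary of the current state of the art, and you are candid about where it stalls. The frame-potential remark is likewise a fair assessment: the Welch bound gives a clean equivalent formulation but no independent traction. One small caveat worth adding is that even the weaker, non-covariant form of the conjecture is open, so the reduction to $\WH$-covariance is itself a strengthening rather than a simplification in the logical sense---though all known examples (with one sporadic exception in dimension~$8$) are of this type.
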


\noindent It is widely believed that such a set can always be realized
as an orbit of $\WH,$ and that the number of non-congruent solutions
(meaning solutions that are not related to one another by an isometry
or element of $\SU(n)$) increases with $n.$ There are sporadic
constructions of SIC sets using different finite groups (see
Remark~\ref{HP3}).\medbreak

Explicit algebraic solutions are known for
$n=2,3,4,\ldots,15,19,24,35$ and $48,$ from work of Zauner \cite{Z},
Appleby \cite{App}, Renes \etal \cite{RSC}, Flammia \cite{Fla},
Grassl \cite{Grl}, Zhu \cite{Zhu}, and many other authors (see
\cite{AFZ,DBBA} and references cited therein). All such examples lie
(up to isometry) in solvable extensions of $\mathbb Q$
\cite{AYZ}. Extensive numerical verification has been carried out for
$n\le67$ (Scott and Grassl \cite{SG}).

The next result is well known, but we include it
for completeness. Let $\{[\z_i]\}$ be a SIC set in $\CP^{n-1}$ and
$\{P_i\}$ its image in $\sH_n.$ Recall that $\tr(P_iP_j)=\kappa$ if $i\ne
j.$ Thus $\kappa$ is the cross ratio or transition probability between
any two points in the SIC set.

\begin{lem}\label{CSI}
Any SIC set in $\CP^{n-1}$ satisfies
$\kappa=1/(n+1),$ and \begin{equation}\frac1n\sum P_i=I.\end{equation}
\end{lem}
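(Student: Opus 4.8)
The plan is to extract both claims from the defining conditions of a SIC set together with the rank-one structure of the operators $P_i$. First I would record the two facts that are available: each $P_i$ is a rank-one orthogonal projection, so $\tr(P_i)=1$ and $\tr(P_i^2)=\tr(P_i)=1$; and $\tr(P_iP_j)=\kappa$ for $i\ne j$, where there are exactly $n^2$ of the $P_i$. The idea is to compute $\tr\big((\sum_i P_i)^2\big)$ in two ways and to compare with a lower bound coming from the constraint that $\sum_i P_i$ has trace $n^2$ and lives in the $N{+}1=n^2$-dimensional real space of Hermitian matrices.

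The key computation: expanding the square gives
\begin{equation}
\tr\Big(\big(\ts\sum_i P_i\big)^2\Big)=\sum_i\tr(P_i^2)+\sum_{i\ne j}\tr(P_iP_j)=n^2+n^2(n^2-1)\kappa.
\end{equation}
On the other hand, writing $S=\sum_iP_i$, we have $\tr S=n^2$, and by Cauchy--Schwarz (or the fact that for a Hermitian matrix the sum of squares of eigenvalues is minimized, for fixed trace, when all eigenvalues are equal) applied to the $n$ eigenvalues of $S$,
\begin{equation}
\tr(S^2)\ \ge\ \frac{(\tr S)^2}{n}=\frac{n^4}{n}=n^3,
\end{equation}
with equality if and only if $S$ is a scalar multiple of the identity, namely $S=nI$. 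Combining the two displays yields $n^2+n^2(n^2-1)\kappa\ge n^3$, i.e. $\kappa\ge 1/(n+1)$.

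To finish I would invoke the reverse inequality, which is exactly the content of Lemma~\ref{CSI}'s companion hypothesis hidden in the statement --- but since we only want to prove $\kappa=1/(n+1)$ and $S=nI$ \emph{simultaneously}, the cleanest route is a second application of Cauchy--Schwarz in the opposite direction, or more simply: a SIC set is a regular simplex with $n^2$ vertices inscribed in the sphere of radius $\sqrt{1-1/n}$ about $n^{-1}I$ in the hyperplane $\tr=1$ of $\sH_n\cong\R^N$ with $N=n^2-1$, and a regular simplex with $N+1$ equally spaced vertices in $\R^N$ has its centroid equal to the centre of the circumscribed sphere. Hence $\frac1{n^2}\sum_iP_i=\frac1nI$, which is the second claim, and feeding $S=nI$ back into the expansion above forces $\kappa=1/(n+1)$. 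The main obstacle is arguing cleanly that the centroid coincides with $n^{-1}I$: this is where the rank-one condition is genuinely used (it pins each $P_i$ to a definite sphere about $n^{-1}I$, since $\|P_i-n^{-1}I\|^2=\tr(P_i^2)-2n^{-1}\tr(P_i)+n^{-2}=1-1/n$), and one should note that a set of $N+1$ points in $\R^N$ that are pairwise equidistant \emph{and} equidistant from a common point $q$ must have $q$ as centroid, since the $N+1$ unit vectors from $q$ to the vertices sum to zero (being a regular simplex's vertex directions). I would present this last linear-algebra fact as the one line that closes both parts at once.
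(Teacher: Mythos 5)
Your argument is correct, but it is genuinely different from the paper's. The paper's proof is purely algebraic: setting $Q_j=P_j-\kappa I$ one finds $\tr(P_iQ_j)=(1-\kappa)\delta_{ij}$, so the $n^2$ operators $P_i$ form a basis (a quorum) of the $n^2$-dimensional real space of Hermitian matrices; expanding $I=\sum c_iP_i$ and pairing with each $Q_j$ shows all $c_i$ are equal, and taking the trace then forces $c_i=1/n$ and $\kappa=1/(n+1)$ simultaneously. You instead argue geometrically inside the affine hyperplane $\sH_n$ of trace-one Hermitian matrices: each $P_i$ lies on the sphere of radius $\sqrt{1-1/n}$ about $n^{-1}I$, the $n^2=N+1$ pairwise-equidistant points form a nondegenerate regular simplex in $\R^N$, and the unique circumcentre of such a simplex is its centroid, whence $\sum P_i=nI$; expanding $\tr\big((\sum P_i)^2\big)=n^2+n^2(n^2-1)\kappa=n^3$ then gives $\kappa$. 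Both routes are sound. The paper's buys the informational-completeness statement (the $P_i$ are a basis) as a by-product, which is why the authors phrase it that way; yours makes transparent the simplex picture the paper alludes to just before the lemma, and your opening Cauchy--Schwarz step, though ultimately redundant, gives the a priori bound $\kappa\ge 1/(n+1)$ for free. One small tightening: your parenthetical ``the unit vectors sum to zero, being a regular simplex's vertex directions'' presupposes what you want; the clean one-line closure is that $N+1$ pairwise-equidistant points are affinely independent (their Gram matrix is $\frac{\delta^2}{2}(I+J)\succ0$), so the circumcentre is the unique intersection of the $N$ perpendicular-bisector hyperplanes, and by symmetry the centroid is equidistant from all vertices, hence equals $q=n^{-1}I$.
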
 

\begin{proof} 
Define $Q_j=P_j-\kappa I.$ Then
\begin{equation} \tr(P_iQ_j)=
\left\{\ba{ll} 1-\kappa\quad & i=j\\0& i\ne j\ea\right.\end{equation}
So $(P_i)$ is a basis of $i\mathfrak{u}(n)$ (called a \emph{quorum})
and we can set
\begin{equation}\label{I}
I=\sum_{i=1}^{n^2} c_iP_i.
\end{equation}
Applying $\tr(Q_j\,\cdot),$ we get $1-\kappa n=c_j(1-\kappa),$ so all the
$c_i$ are equal. To complete the proof, take the trace of
\eqref{I}. This gives
\begin{equation} n=n^2\frac{1-\kappa n}{1-\kappa},\end{equation}
and $\kappa=(1-n)/(1-n^2)=1/(1+n).$
\end{proof}

It will be convenient in our analysis of SIC sets to introduce the
following.

\begin{defi}\label{cs} 
Two points in $\CP^{n-1}$ will be said to be `correctly separated' if
the cross ratio that they define equals $1/(n+1).$
\end{defi}

Suppose that $\CP^{n-1}$ admits a SIC set $\SS.$ Then, up to isometry,
two points form part of a SIC set if and only if they are correctly
separated.  This follows from the fact that $\CP^{n-1}$ is a
\emph{two-point homogeneous space}, meaning that there exists an
isometry that maps any two points to any other two points the same
distance apart \cite{Wang}. The lemma above is then a key result that
enables one to go some way in attempting to construct a SIC set
without knowing for sure that it exists.

\begin{rem}\label{HP3}\rm 
Lemma~\ref{CSI} precludes the existence of four or more points of a
SIC set from lying on a projective line $\CP^1$ whenever $n\ge3,$
since their cross ratio would have to be that for $n=2,$ namely $1/3.$
An application relates to the SIC set in $\CP^7$ constructed by Hoggar
\cite{Hogg}. It consists of a $(\Z_2)^6$ orbit of 64 points that the
Hopf fibration $\pi\colon\CP^7\to\HP^3$ projects down to an equal
number of points in the quaternionic projective space $\HP^3.$ It
would be impossible to find a SIC set in $\CP^7$ with four points in
each fibre of $\pi,$ but we wonder whether there exists a SIC set
arising from 32 points in $\HP^3$ with two points in each fibre. Such
questions are related to work by Armstrong \etal on twistor lifts
\cite{APS}.
\end{rem}

\sect{The action of a maximal torus}\label{mt}

Starting in this section, we restrict the discussion mainly to the
case $n=3.$ We shall develop the concept of moment mapping, but
restricted to a maximal torus in $\SU(3),$ acting on $\CP^2.$ Fix the
torus
\begin{equation}\label{T}
T = \left\{\left(\!\ba{ccc} 
e^{ix_1} & 0 & 0\\
0 & e^{ix_2} & 0\\
0 & 0 & e^{ix_3}
\ea\!\right)\ :\ \ts
\sum\limits_{i=1}^3 x_i=0\hbox{ mod\,$2\pi$}\right\},
\end{equation}
which is, of course, homeomorphic to $S^1\times S^1.$ The hyperplane
$x_1+x_2+x_3=0$ in $\R^3$ represents the Lie algebra $\mathfrak t$ of
$T,$ which we also identify with $\mathfrak{t}^*$ using the induced
inner product. The moment mapping for $T$ acting on $\CP^2$ is then
the composition
\begin{equation}
\CP^2 \lra\su(3)\lra \mathfrak{t}
\end{equation}
obtained by projecting the adjoint orbit orthogonally to $\mathfrak t.$

When we pass from $\su(3)$ to $\sH_3$ via \eqref{obv}, we can identify
this composition with the mapping $[\z]\mapsto(x_1,x_2,x_3),$ where
\begin{equation}\label{mu}
(x_1,x_2,x_3)=\mu([\z])=\frac1{\|\z\|^2}(|z_1|^2,|z_2|^2,|z_3|^2)
\end{equation}
consists of the diagonal entries in \eqref{can}. Here,
$\|\z\|^2=|z_1|^2+|z_2|^2+|z_3|^2,$ though it is convenient to assume
$\|\z\|=1.$ After the shift from traceless matrices to $\sH_3,$ the
image of $\mu$ is the two-simplex $\sT,$ a filled equilateral triangle
lying in the plane $x_1+x_2+x_3=1,$ illustrated in Figure~1. The
residual three-fold symmetry visible is that of the Weyl group
$W=N(T)/T\cong\Z_3.$

It is well known that $\sT$ parametrizes the orbits of $T$ on $\CP^2$
via \eqref{mu}. See, for example, Guillemin and Sternberg \cite{GS}.
The inverse image of an interior point of $\sT$ is a two-torus
$T/\Z_3;$ the inverse image of a vertex is a single point in $\CP^2;$
and the inverse image of any other boundary point is a circle $S^1.$
Topologically, this leads to a description of the complex projective
plane as a quotient
\begin{equation}
 \CP^2 = \frac{\sT\times T^2}{{}\sim{}}.\end{equation}
 Here $\sim$ is the equivalence relation that collapses points over the
boundary of $\sT$ in accordance with the scheme outlined above.

\begin{figure}[b]
\scalebox{1.4}{\includegraphics{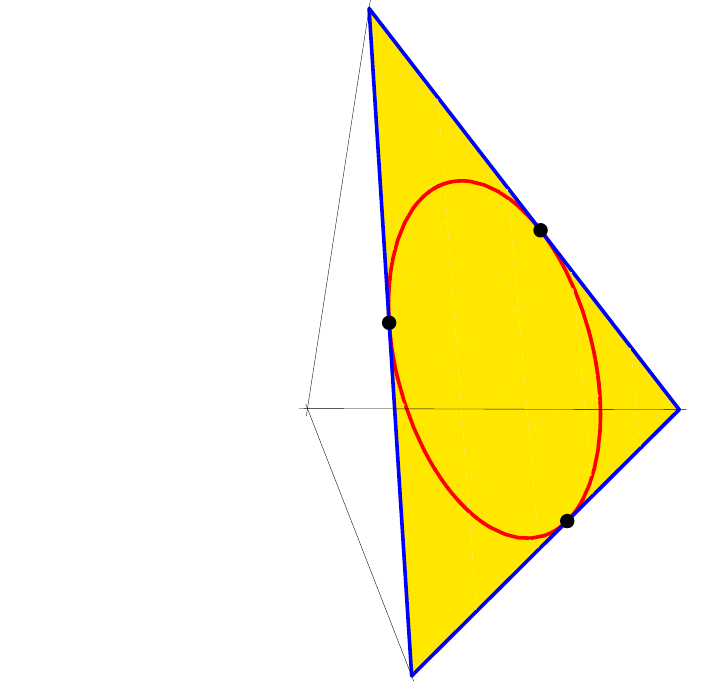}}
\caption{The image of the moment mapping $\mu\colon\CP^2\to\R^3$ is a
  two-simplex $\sT$ that takes the form of a filled equilateral
  triangle. If $\z=(z_1,z_2,z_3)$ is a unit vector in $\C^3,$ the
  point $[\z]$ is mapped to $(x_1,x_2,x_3)=(|z_1|^2,|z_2|^2,|z_3|^2).$
  The inscribed circle is the intersection of the plane
  $x_1+x_2+x_3=1$ containing the (coloured) image of $\mu$ and the
  (invisible) sphere $x_1^2+x_2^2+x_3^2=\ha.$}
\end{figure}

Let $m_1,m_2,m_3$ be the midpoints of the sides of $\sT,$ and consider
the circles
\begin{equation} \label{cimi} 
C_i=\mu^{-1}(m_i),\qquad i=1,2,3.
\end{equation} 
The first circle $C_1$ consists of those points $[0,z_2,z_3]$ of
$\CP^2$ with $|z_2|=|z_3|.$ Any set of three
equidistant points in $C_1$ has the form
\begin{equation}\label{phs}
 [0,e^{i\si},1],\quad [0,e^{i\si},\om],\quad [0,e^{i\si},\om^2],
\end{equation}
where $\om=e^{2\pi i/3}.$ The cross ratio defined by any two of these
points is given by
\begin{equation}\ts 
\left|\ha(1+\om)\right|^2 = \frac14,
\end{equation}
so they are indeed correctly separated. Similarly, $C_2$ consists of
points $[z_1,0,z_3]$ with $|z_1|=|z_3|,$ and $C_3$ points
$[z_1,z_2,0]$ with $|z_1|=|z_2|.$ Now choose three equidistant points
in $C_2,$ and three equidistant ones in $C_3.$ It is easy to check
that the resulting nine points constitutes a SIC set. This generalizes
Proposition~\ref{011}.

\begin{defi} 
By a midpoint solution, we mean a SIC set in $\CP^2$ consisting of
three points in each of the three circles $C_1,C_2,C_3.$
\end{defi}

\noindent This construction defines a one-parameter family of SIC sets
up to isometry, since the stabilizer in $\SU(3)$ of the points in
$C_1$ is a subgroup $\U(1)$ that can be used to remove the phase
ambiguity in $C_2.$ See the discussion surrounding \eqref{U1}.

Let $\sC$ denote the circle passing through the midpoints
$m_1,m_2,m_3,$ illustrated in Figure~1. As a curve in $\R^3,$ it is
the intersection of the plane $x_1+x_2+x_3=1$ with the sphere
$x_1^2+x_2^2+x_3^2=1/2.$ It was actually plotted using the next
result.

\begin{lem}\label{paraC} 
In $\R^3,$ the inscribed circle $\sC$ is parametrized by
\begin{equation}\fr23\Big(\!\cos^2\th,\,\cos^2(\th+\fr23\pi),\,
\cos^2(\th+\fr43\pi)\Big),\end{equation} for
$\th\in[-\ha\pi,\ha\pi).$
\end{lem}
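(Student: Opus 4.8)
The plan is to verify directly that the stated parametrization traces out exactly the set of points satisfying the two defining equations of $\sC$, namely $x_1+x_2+x_3=1$ and $x_1^2+x_2^2+x_3^2=1/2$, and that as $\th$ runs over a half-period the image is a full circle. First I would write $x_i=\fr23\cos^2(\th+\phi_i)$ with $\phi_1=0$, $\phi_2=\fr23\pi$, $\phi_3=\fr43\pi$, and use the identity $\cos^2\alpha=\ha(1+\cos 2\alpha)$ to get $x_i=\fr13\big(1+\cos(2\th+2\phi_i)\big)$. Summing over $i$ reduces the first equation to the statement that $\sum_{i=1}^3\cos(2\th+\fr43 i\pi - \fr43\pi)=0$ — i.e.\ that three unit vectors at mutual angles $2\pi/3$ sum to zero — which is immediate from $1+\om+\om^2=0$ after taking real parts, giving $x_1+x_2+x_3=1$.

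Next I would compute $\sum x_i^2$. Writing $c_i=\cos(2\th+2\phi_i)$, we have $\sum x_i^2=\fr19\sum(1+c_i)^2=\fr19\big(3+2\sum c_i+\sum c_i^2\big)=\fr19\big(3+0+\sum c_i^2\big)$ using the previous step. For $\sum c_i^2$ I would again use $c_i^2=\ha(1+\cos(4\th+4\phi_i))$, so $\sum c_i^2=\fr32+\ha\sum\cos(4\th+4\phi_i)$; since $4\phi_i$ runs through $0,\fr83\pi,\fr{16}3\pi$, which are congruent mod $2\pi$ to $0,\fr23\pi,\fr43\pi$, the same "three cube roots of unity sum to zero" argument gives $\sum c_i^2=\fr32$. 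Hence $\sum x_i^2=\fr19\cdot\fr92=\ha$, as required. So the parametrized curve lies on $\sC$.

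It remains to see that the map $\th\mapsto(x_1,x_2,x_3)$ is a bijection onto $\sC$ for $\th\in[-\ha\pi,\ha\pi)$. Since the $x_i$ depend only on $2\th\bmod 2\pi$, the natural period is $\pi$, so restricting to a half-open interval of length $\pi$ is the correct choice to avoid repeating points; I would check injectivity by noting that $(x_1,x_2)$ already determines $(\cos 2\th,\sin 2\th)$ linearly (the coefficient matrix coming from $\cos(2\th+2\phi_i)=\cos 2\th\cos 2\phi_i-\sin 2\th\sin 2\phi_i$ is invertible because $\phi_1,\phi_2$ are not antipodal mod $\pi$), hence determines $2\th\bmod 2\pi$ and thus $\th$ on the chosen interval. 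Surjectivity then follows because the image is a connected closed curve inside the circle $\sC$ (a one-dimensional manifold) and a non-constant continuous loop in a circle that is locally injective must be onto; alternatively one notes $\sC$ is itself a circle of radius $\rec$ centred at $(\fr13,\fr13,\fr13)$ and the parametrization has constant speed, so it is a genuine reparametrization.

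The only mildly delicate point is bookkeeping with the phase shifts — making sure that $2\phi_i$ and $4\phi_i$ both reduce mod $2\pi$ to a set of three angles in arithmetic progression with common difference $2\pi/3$, so that the cancellation $\sum\om^k=0$ applies in each case; everything else is the elementary identity $\cos^2=\ha(1+\cos 2\cdot)$ applied twice. I would also remark in passing that the three values $\th\equiv 0,\fr\pi3,\fr{2\pi}3\pmod\pi$ give the three midpoints $m_1,m_2,m_3$ (each of the form $\fr23$ times a permutation of $(1,\qa,\qa)$), confirming that $\sC$ does pass through them as claimed and pinning down the orientation of the parametrization.
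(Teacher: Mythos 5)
Your argument is correct, and it reaches the lemma by a genuinely different route from the paper's. The paper works with the square roots: it sets $a=\sqrt{2/3}\cos\th$, $b=\sqrt{2/3}\cos(\th+\fr23\pi)$, $c=\sqrt{2/3}\cos(\th+\fr43\pi)$, observes $a+b+c=0$ and $a^2+b^2+c^2=1$, and then invokes the symmetric-function identity $s_1(-a+b+c)(a-b+c)(a+b-c)=s_2^2-2s_4$ (with $s_k=a^k+b^k+c^k$) to conclude $s_4=\ha$, i.e.\ that $(a^2,b^2,c^2)$ lies on $\sC$; read in the other direction, the same identity shows that any point of $\sC$ is $(a^2,b^2,c^2)$ with $a\pm b\pm c=0$ for some signs, which is how the paper accounts for every point of $\sC$ being reached. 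You instead verify the two defining equations of $\sC$ directly, using $\cos^2\alpha=\ha(1+\cos2\alpha)$ together with the cancellation $\sum_{k=0}^{2}\cos(\beta+\fr{2k\pi}3)=0$, applied once at frequency $2\th$ for the linear equation and once at frequency $4\th$ for the quadratic one; and your treatment of injectivity and surjectivity on $[-\ha\pi,\ha\pi)$ is more explicit than anything in the paper. Both arguments are elementary; the paper's identity packages the $s_4=\ha$ computation neatly and is reused implicitly for the converse, while yours avoids having to produce or verify that identity at all.

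Two small slips in your closing remarks, neither of which affects the proof. First, the circle $\sC$ has radius $1/\sqrt6$, not $\rec$: its centre is $(\fr13,\fr13,\fr13)$, at distance $\rec$ from the origin, and it lies on the sphere of radius $1/\sqrt2$, so the radius is $\sqrt{\ha-\fr13}$. (This matters if you lean on the constant-speed/arclength version of surjectivity: the speed is $\sqrt{2/3}$, so the arclength over an interval of length $\pi$ is $\pi\sqrt{2/3}=2\pi/\sqrt6$, which matches the circumference only with the correct radius.) Second, the values $\th\equiv0,\fr\pi3,\fr{2\pi}3\pmod\pi$ give the points $\fr23$ times permutations of $(1,\qa,\qa)$, e.g.\ $(\fr23,\sa,\sa)$, which are diametrically \emph{opposite} the midpoints; the midpoints $m_i$, e.g.\ $(0,\ha,\ha)=\fr23(0,\fr34,\fr34)$, occur at $\th=\pm\ha\pi$, $-\sa\pi$, $\sa\pi$, as stated at the end of the paper's proof.
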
 

\begin{proof} 
First, consider the effect of $\mu$ on \emph{real} vectors. Suppose
that $\z=(a,b,c)$ is a unit vector with $a,b,c\in\R,$ and set
$s_k=a^k+b^k+c^k.$ There is an identity
\begin{equation}\label{Sid}
s_1(-a+b+c)(a-b+c)(a+b-c)=s_2^2-2s_4.
\end{equation}
If $\mu([\z])=(a^2,b^2,c^2)\in\sC$ then $s_2=1$ and $s_4=1/2,$ so the
right-hand side of \eqref{Sid} vanishes. It follows $a\pm b\pm c=0$
for some choice of signs. Now set
\begin{equation}\label{abc=}\ts
a=\sqrt{\frac23}\cos \th,\quad 
b=\sqrt{\frac23}\cos(\th+\fr23\pi),\quad 
c=\sqrt{\frac23}\cos(\th+\fr43\pi).
\end{equation}
Trig-expanding $b$ and $c$ shows that $a+b+c=0$ and $a^2+b^2+c^2=1.$
It follows from \eqref{Sid} that $s_4=1/2$ and $(a^2,b^2,c^2)\in\sC.$

The midpoints $m_1,m_2,m_3$ are given respectively by
$\th=\pm\pi/2,-\pi/6,\pi/6.$ This confirms the stated range for $t.$
\end{proof}

\sect{Weyl-Heisenberg orbits}

In this section, we show how the moment mapping \eqref{mu} helps one
to understand the action of the groups $W$ and $H$ defined in
\eqref{cyc} and \eqref{run} with $n=3.$ We shall see that the $\sC$
plays a prominent role, and Lemma~\ref{paraC} will be the basis
for the parametrization of elements of a SIC set.

\begin{lem}\label{nicefibres}
The $H$-orbit of a point $[\z]$ in $\CP^2$ consists of three points
that are correctly separated from one another if and only if
$\mu([\z])\in\sC.$
\end{lem}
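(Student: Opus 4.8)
The plan is to compute the three pairwise cross ratios within the $H$-orbit directly and to read off when they equal $\qa$, which is the value $1/(n+1)$ for $n=3$. Normalise $\z$ so that $\|\z\|=1$, and write $\mu([\z])=(x_1,x_2,x_3)$ with $x_i=|z_i|^2$, so that $x_1+x_2+x_3=1$. The generator of $H$ from \eqref{run} acts as the diagonal unitary $h=\mathrm{diag}(1,\om,\om^2)$, so the orbit is $\{[\z],[h\z],[h^2\z]\}$. Because $h$ is unitary, $\langle h\z,h^2\z\rangle=\langle\z,h\z\rangle$, while $\langle\z,h^2\z\rangle=\ol{\langle\z,h\z\rangle}$; hence all three pairwise cross ratios coincide and equal $|\langle\z,h\z\rangle|^2=|x_1+\om x_2+\om^2 x_3|^2$. (If two of the $x_i$ vanish the orbit collapses to a single point, so the first assertion of the lemma fails; this is consistent with the conclusion below, since then $\mu([\z])$ is a vertex of $\sT$ and does not lie on $\sC$.)

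Next I would simplify this quantity. As the $x_i$ are real,
\begin{equation*}
|x_1+\om x_2+\om^2 x_3|^2=(x_1+\om x_2+\om^2 x_3)(x_1+\om^2 x_2+\om x_3),
\end{equation*}
which expands, using $\om^3=1$ and $\om+\om^2=-1$, to $\sum_i x_i^2-\sum_{i<j}x_ix_j$. Eliminating the mixed terms via $2\sum_{i<j}x_ix_j=(\sum_i x_i)^2-\sum_i x_i^2=1-\sum_i x_i^2$ rewrites this as $\frac32(x_1^2+x_2^2+x_3^2)-\ha$. Thus the common cross ratio of the $H$-orbit is $\frac32\|\mu([\z])\|^2-\ha$, regarding $\mu([\z])$ as a point of $\R^3$.

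The conclusion is then immediate: this equals $\qa$ exactly when $x_1^2+x_2^2+x_3^2=\ha$, and that equation, together with $x_1+x_2+x_3=1$, is precisely the description of $\sC$ as the intersection of the plane $x_1+x_2+x_3=1$ with the sphere of squared radius $\ha$ (cf.\ Lemma~\ref{paraC} and Figure~1). Hence the three points of the $H$-orbit are correctly separated from one another if and only if $\mu([\z])\in\sC$.

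I do not anticipate a genuine obstacle here; the only places requiring a little care are the verification that the three pairwise cross ratios genuinely agree — which is why the unitarity of $h$ is invoked at the outset — and the bookkeeping in the algebraic simplification, together with a remark disposing of the degenerate orbits where fewer than three distinct points appear.
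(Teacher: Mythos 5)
Your proposal is correct and follows essentially the same route as the paper: both compute the common cross ratio of the $H$-orbit as a quadratic expression in the $x_i=|z_i|^2$ (the paper writes it as $\alp-\beta$ with $\alp+2\beta=1$, which is exactly your $\frac32\sum x_i^2-\ha$) and reduce correct separation to $\sum x_i^2=\ha$, i.e.\ $\mu([\z])\in\sC$. Your use of unitarity of $h$ to see that all three pairwise cross ratios agree is a slightly cleaner packaging of the paper's remark that the condition depends only on $\mu([\z])$ because $H\subset T$, but it is not a different argument.
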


\begin{proof}
Suppose that $\z=\z^{(0)}$ is a unit vector. The orbit $H\cdot[\z]$
consists of the projective classes of the vectors
\begin{equation}\label{Horb}
\z^{(0)}=(z_1,z_2,z_3),\quad 
\z^{(1)}=(z_1,\om z_2,\om^2 z_3),\quad
\z^{(2)}=(z_1,\om^2 z_2,\om z_3)
\end{equation}
generated by \eqref{run}. We can express
\begin{equation}
|\big<\z^{(0)},\z^{(1)}\big>|^2
=(|z_1|^2+ \om|z_2|^2+\om^2|z_3|^2)(|z_1|^2+\om^2|z_2|^2+\om|z_3^2|)
\end{equation}
in the form $\alp-\beta,$ where
\begin{equation} 
\alp=|z_1|^4+|z_2|^4+|z_3|^4,\quad 
\beta=|z_2|^2|z_3|^2+|z_3|^2|z_1|^2+|z_1|^2|z_2|^2.
\end{equation}
Therefore $\z^{(0)}$ and $\z^{(1)}$ are correctly separated if and only if
$\alp-\beta=1/4.$ But 
\begin{equation}\label{JK}
\alp+2\beta=(|z_1|^2+|z_2|^2+|z_3|^2)^2=1,
\end{equation}
since $\z=\z^{(0)}$ is normalized, so the condition of correct
separation is $\alp=1/2.$ Since $x_i=|z_i|^2$ are the Cartesian
coordinates in $\R^3,$ correct separation of $\z^{(0)}$ and $\z^{(1)}$
implies that $\mu([\z])\in\sC.$ This condition only depends on
$\mu([\z])$ since $H$ is a subgroup of $T$ and its action commutes
with all the elements of $T.$ Therefore if $\mu([\z])\in\sC,$ all
three points in \eqref{Horb} will be correctly separated.
\end{proof}

\begin{exa}\rm
Lemma~\ref{nicefibres} is really an assertion about the induced metric
on the fibres $\mu^{-1}(p)$ for $p\in\sC.$ This metric will depend
crucially on the position of $p$ in $\sC,$ since it degenerates as $p$
approaches any one of the midpoints $m_i$ (over which the fibres are
circles rather than 2-tori). This behaviour is illustrated in
Figure~2, which provides a visualization of the fibres $\mu^{-1}(p_i)$
for $i=1,2,$ where
\begin{equation}\label{p12}
p_1=\big(\fr23,\fr16,\fr16\big),\qquad
p_2=\big(\fr18(3+\sqrt5),\fr14,\fr18(3-\sqrt5)\big).
\end{equation}
are two points of $\sC$. Note that $p_1$ is the point
  diametrically opposite $m_1,$ whereas $p_2$ lies between $p_1$ and
  $m_3.$

The coordinates used in Figure~2 are derived from the action of the
maximal torus \eqref{T}, which is represented by translation. Scalar
multiplication by $\om=e^{2\pi i/3}$ on vectors in $\C^3$ generates
the action of the centre $\Z_3$ of $\SU(3),$ so that $(z_1,z_2,z_3)$
and $(\om z_1,\om z_2,\om z_3)$ appear as distinct points in the
diagrams, although they determine the same point of $\CP^2.$ The
centre is responsible for the evident three-fold symmetry, which is
best represented by the hexagonal fundamental domain on the right-hand
side. Comparing this with the left-hand parallogram and its
translates, one sees that a 2-torus can be formed by identifying the
opposite edges of a hexagon, a fact that is well known (see, for
example, Thurston \cite{Thur}).

\begin{figure}[t]
\scalebox{.95}{\includegraphics{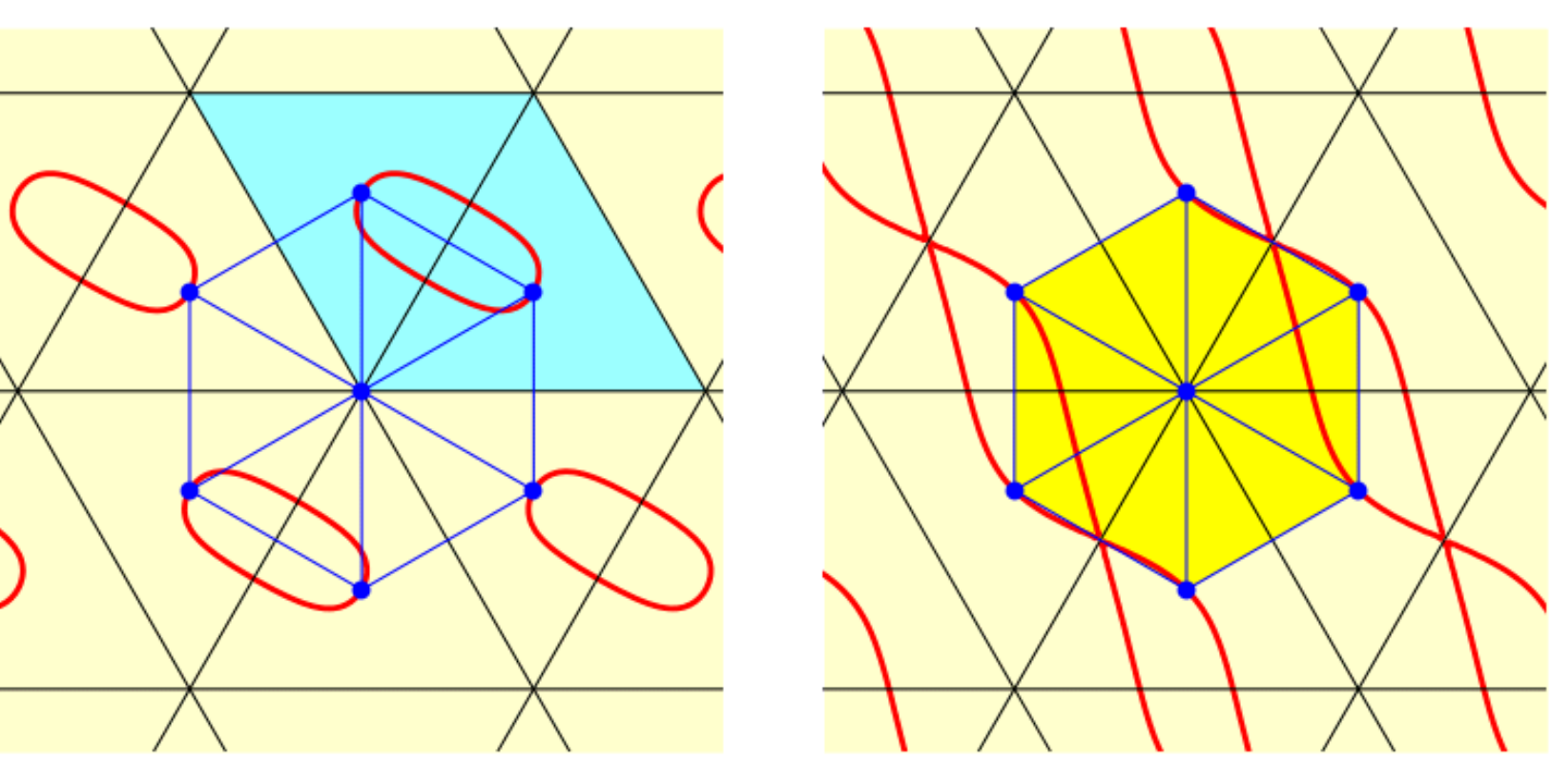}}
\caption{A representation of the fibre $\mu^{-1}(p_1)$ (left) and
  $\mu^{-1}(p_2)$ (right) in $\CP^2$ for the points \eqref{p12}. The
  coloured regions are two different fundamental regions for the
  torus, and the red curves are points a distance $2\pi/3$ from the
  centre point.}  \vskip10pt
\end{figure}

Both diagrams display exactly three distinct points of $\CP^2$ in the
closure of each coloured fundamental domain, and each of these triples
of points forms an equilateral triangle. This can be seen from an
inspection of the curves that are the loci of points a distance
$2\pi/3$ from the centre point. The latter is correctly separated from
each of the other two points, and these two points are correctly
separated from each other because distances are translation
invariant.
\end{exa}

We are now in a position to give a full description of those SIC sets
that are orbits of the group generated by \eqref{cyc} and \eqref{run}.

\begin{theorem}\label{class}
Let $\z=(z_1,z_2,z_3)\in\C^3.$ Then $(\WH)\cdot[\z]$ is a SIC set if
and only if one of the variables $z_1,z_2,z_3$ vanishes, or
\begin{equation}\label{cos3}
[\z] = 
\Big[\!\cos\th,\,\om^j\cos(\th+\fr23\pi),\,\om^k\cos(\th+\fr43\pi)\Big],
\end{equation}
for some $\th\in[-\ha\pi,\ha\pi)$ and $j,k\in\{0,1,2\}.$ 
\end{theorem}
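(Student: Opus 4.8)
The plan is to split a Weyl--Heisenberg orbit SIC set into two cases according to whether any coordinate of $\z$ vanishes, and in the non-vanishing case reduce the problem to understanding when the combined $W$- and $H$-orbit is correctly separated. First I would dispose of the easy direction: if one coordinate vanishes, say $z_1=0$, then $[\z]$ lies on the line $z_1=0$, and the $W$-orbit and $H$-orbit together produce the three triples sitting on the equators of the three coordinate 2-spheres described in the introduction (the midpoint solutions / Proposition~\ref{011}); one checks by the explicit formula \eqref{phs} that these nine points are mutually correctly separated. Conversely, if $[\z]$ is of the form \eqref{cos3}, then its coordinates are, up to phases $\om^j,\om^k$ on the second and third slots, the \emph{real} vector $(a,b,c)$ of Lemma~\ref{paraC}, so $\mu([\z])\in\sC$; one verifies this is a SIC set by the same kind of direct computation.

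The substance is the other converse: assuming $(\WH)\cdot[\z]$ is a SIC set with all $z_i\ne0$, show $[\z]$ has the form \eqref{cos3}. The key reduction is that correct separation within the $H$-orbit $\{\z^{(0)},\z^{(1)},\z^{(2)}\}$ forces $\mu([\z])\in\sC$, by Lemma~\ref{nicefibres}. So after normalizing $\|\z\|=1$ we have $(|z_1|^2,|z_2|^2,|z_3|^2)=\frac23(\cos^2\th,\cos^2(\th+\frac23\pi),\cos^2(\th+\frac43\pi))$ for some $\th$, i.e.\ the \emph{moduli} of the $z_i$ are pinned down, and it only remains to determine the \emph{phases}. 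Writing $z_i=|z_i|e^{i\phi_i}$, the phase $\phi_1$ and an overall phase are irrelevant (projective class, and one can apply $T$ or simply absorb), so there are effectively two free phase parameters, $\phi_2-\phi_1$ and $\phi_3-\phi_1$; the claim \eqref{cos3} is exactly that these must lie in $\frac{2\pi}{3}\Z$, i.e.\ equal $\arg\om^j$ and $\arg\om^k$.

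To pin the phases I would bring in the cyclic group $W$: a SIC set requires that $[\z]$ be correctly separated not just from the $H$-orbit of $\z$ but from the $H$-orbit of $w\cdot\z$ and of $w^2\cdot\z$, where $w$ is the shift \eqref{cyc}. Computing $|\langle \z, w^k\cdot\z^{(\ell)}\rangle|^2$ and setting each equal to $1/4$ yields a system of equations in which, now that the moduli are fixed by $\th$, only the phase differences appear. The expectation is that expanding these Hermitian inner products and using $\sum|z_i|^2=1$ together with the $\sC$-condition $\sum|z_i|^4=1/2$ collapses the system to a small number of trigonometric equations in $\phi_2-\phi_1,\phi_3-\phi_1$ whose only solutions are the nine phase choices $(\om^j,\om^k)$. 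The main obstacle I anticipate is precisely this phase computation: keeping the algebra manageable as one expands the six-or-so cross-ratio conditions among the $W$- and $H$-translates, and arguing cleanly that no spurious phase solutions survive once the $\sC$ constraint on the moduli is imposed (one must be careful that at the midpoints $m_i$, where one $|z_i|$ vanishes or the fibre degenerates, the argument still applies or reduces to the first case). A possible shortcut is to observe that $(\WH)\cdot[\z]$ being a SIC set and $\mu([\z])\in\sC$ together force the nine image points to sit inside the $\mu$-preimages of $\sC$, which are the pinched tori of the introduction; comparing with the explicitly known midpoint-type solution and using that the SIC condition is rigid along each such fibre (Lemma~\ref{nicefibres} and its Example) should force the phases without a brute-force expansion.
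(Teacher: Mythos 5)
Your overall strategy is the same as the paper's: invoke Lemma~\ref{nicefibres} to force $\mu([\z])\in\sC$, so that the moduli $|z_i|$ are pinned down by \eqref{abc=}, and then determine the admissible phases by demanding correct separation between the $H$-orbit of $\z$ and the $H$-orbit of a $W$-translate. The problem is that the decisive step --- showing that the resulting phase equations admit \emph{only} the nine solutions $(\om^j,\om^k)$ --- is exactly what you leave as an ``expectation,'' and it is where the entire content of the theorem lies; as written, the proposal does not rule out spurious phase solutions. The paper closes this gap with one clean identity. Setting $\z'=(z_3,z_1,z_2)$, one finds $|\langle\z^{(k)},\z'\rangle|^2=\tfrac14+2\Re\big[\om^{2k}\Delta\big]$, where $\Delta=z_1^2\oz_2\oz_3+z_2^2\oz_3\oz_1+z_3^2\oz_1\oz_2$ and the constant $\tfrac14$ is the value of $\sum_{i<j}|z_i|^2|z_j|^2$ on $\sC$. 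Requiring this to equal $\tfrac14$ for $k=0,1,2$ forces $\Re(\om^{2k}\Delta)=0$ for all $k$, hence $\Delta=0$. Writing $\z=(a,e^{i\beta}b,e^{i\gamma}c)$ with $a,b,c$ real as in \eqref{abc=} (so $a+b+c=0$) and assuming $abc\ne0$, the single equation $\Delta=0$ becomes $e^{3i\beta}b+e^{3i\gamma}c=b+c$; taking moduli gives $\cos(3\beta-3\gamma)=1$, and then $b+c\ne0$ forces $e^{3i\beta}=1=e^{3i\gamma}$, i.e.\ the phases are powers of $\om$. Some such explicit computation must be carried out; asserting that the system ``collapses'' is not a proof.

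A secondary point: your treatment of the vanishing-coordinate case is too quick. If $z_1=0$ but $|z_2|\ne|z_3|$, then for a unit vector the cross ratio between $[0,z_2,z_3]$ and its $H$-translate is $1-3|z_2|^2|z_3|^2\ne\tfrac14$, so the orbit is \emph{not} a SIC set; the ``if'' direction of the vanishing case requires the two nonzero coordinates to have equal moduli, i.e.\ $[\z]$ must lie on one of the circles $C_i$ of \eqref{cimi}, and only then does \eqref{phs} apply. (This is arguably an imprecision in the statement of the theorem itself, but your argument should not paper over it by claiming that an arbitrary point of the line $z_1=0$ generates a midpoint solution.)
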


\begin{proof}
Suppose that $\z$ is a unit vector and that $(\WH)\cdot[\z]$ is a SIC
set.

Let $\z'=(z_3,z_1,z_2).$ In the notation \eqref{Horb}, we have
\begin{equation}\label{RDe}
|\big<\z^{(k)},\z'\big>|^2 
= \big|z_1\oz_3+\om^kz_2\oz_1+\om^{2k}z_3\oz_2\big|^2
= \beta + 2\,\mathrm{Re}\big[\om^{2k}\Delta\big],
\end{equation} 
where
\begin{equation}\label{De}
\Delta=z_1^2\oz_2\oz_3+z_2^2\oz_3\oz_1+z_3^2\oz_1\oz_2.
\end{equation}
By assumption, \eqref{RDe} equals $1/4$ for all $k=0,1,2.$ From
\eqref{JK} we have $\beta=1/4,$ so the expression in square brackets
above must be purely imaginary. This happens for all $k$ if and only
if $\Delta=0.$

By assumption, $\mu([\z])\in\sC,$ so $[\z]$ must lie in a $T$-orbit of
$[a,b,c]$ where $a,b,c$ are given by \eqref{abc=} for some $\th.$
Since $a+b+c=0,$ \eqref{De} and \eqref{RDe} tell us that $\z=(a,b,c)$
is a fiducial vector. Let us look for other fiducials in the same
$T$-orbit by considering
\begin{equation}\label{zzz}
(z_1,z_2,z_3)=(a,\,e^{i\beta}b,\,e^{i\gamma}c),
\end{equation}
having normalized the coefficient of $a.$ Let us assume that
$abc\ne0,$ so $b+c\ne0.$ Since $\Delta=0,$ we have
\begin{equation}
e^{3i\beta}b+e^{3i\gamma}c = b+c.
\end{equation}
Taking the moduli of both sides gives $\cos(3\beta-3\gamma)=1,$ so
$\beta$ equals $\gamma$ mod $2\pi/3.$ It follows that both $\beta$
and $\gamma$ are multiples of $2\pi/3,$ and that $[\z]$ has the
form \eqref{cos3}.

Conversely, the vector \eqref{cos3} satisfies \eqref{RDe} and projects
to $\sC.$ Thus its $\WH$ orbit is a SIC set.
\end{proof}

To summarize, any three equally-spaced points on $\sC$ form the `base'
of a group covariant SIC set. If these are the three midpoints $m_i$
of the sides then \emph{any} point in $\mu^{-1}(m_i)$ is a fiducial
vector. But for a generic point $p\in\sC,$ the choices are restricted
to nine points on the two-torus $\mu^{-1}(p).$ As $p$ approaches a
midpoint, these nine points become three.

\begin{rem}\rm 
The methods of this section can be extended to the study of SIC sets
in $\CP^{n-1}$ that arise as orbits of $\WH$ for $n>3.$ Using the
moment mapping $\mu\colon\CP^{n-1}\to\R^n,$ one can define a subset of
the simplex $\mu(\CP^{n-1})$ consisting of points whose inverse image
contains $H$-orbits of correctly-separated points. For $\CP^3$ the
relevant subset consists of two circular arcs inside a solid
tetrahedron, but is no longer one-dimensional if $n>4,$ as discussed
by Lora Lamia \cite{NLLD}. For applications of the use of
$\mu\colon\CP^3\to\R^4$ in classifying almost-Hermitian structures on
manifolds of real dimension six, see Mihaylov \cite{Mih}.
\end{rem} 

The SIC sets in $\CP^2$ described above have been discussed by Renes
\etal \cite{RSC}, Zhu \cite{Zhu}, and various other authors. In
particular, it is known that any SIC set arising from
Theorem~\ref{class} is isometric to a midpoint solution. This can be
proved by adapting the proof of Proposition~\ref{7} below, but we
shall prove a much stronger result in this paper, namely

\begin{theorem}\label{strong}
Any SIC set $\SS$ in $\CP^2$ is congruent modulo $\SU(3)$ to a
midpoint solution.
\end{theorem}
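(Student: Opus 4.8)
The plan is to reduce the classification of an arbitrary SIC set in $\CP^2$ to a computer-assisted polynomial calculation by exploiting the two-point homogeneity of $\CP^2$ together with the moment-map geometry developed in \S\ref{mt}--\S\ref{HW}. First I would fix two points $[\z_1],[\z_2]$ of $\SS.$ Since any two correctly separated points can be moved to any other such pair by an element of $\SU(3),$ we may take these to be a convenient normalized pair; Lemma~\ref{nicefibres} (and the discussion around it) shows that up to the residual torus and Weyl symmetry we can arrange that the remaining seven points $[\z_3],\dots,[\z_9]$ each lie over the inscribed circle $\sC,$ i.e.\ $\mu([\z_i])\in\sC$ for $i\ge3,$ so that each is correctly separated from the two fixed points. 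By Lemma~\ref{paraC} each such point is parametrized (up to phase) by an angle $\th_i\in[-\ha\pi,\ha\pi),$ whose tangent $x=\tan\th_i$ becomes the working coordinate; the fibre $\mu^{-1}(p)$ is the pinched torus referred to in the introduction, collapsing to a circle exactly over the midpoints $m_1,m_2,m_3.$

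Next I would translate the condition that the seven points be correctly separated from each other into algebra. Each triple contributes essentially one cross-ratio equation of the type computed in \eqref{RDe}--\eqref{De}, involving the angles $\th_i$ and the residual phases on the torus fibres. The strategy indicated in the introduction is incremental: adding a third point gives an equation symmetric in three tangent variables $x,y,z$ (this is Theorem~\ref{fa}, which I may assume); adding a fourth gives four equations in four variables $t,x,y,z.$ I would then totally symmetrize this system, which yields a necessary condition for six of the nine points to form part of a SIC set, expressed through symmetric functions of the roots. A key structural observation — to be established along the way — is that for every \emph{known} (midpoint) solution at least one of the four torus points projects to $\sC$ with angle $\pm\pi/6,$ i.e.\ lands on a midpoint $m_2$ or $m_3$; equivalently one of the root values is the corresponding special value of $t.$ Factoring this known locus out of the ideal produces the \emph{quotient ideal} $(I:J)$ parametrizing any putative \emph{extra} solutions.

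The final step is purely computational: compute a Gr\"obner basis of the quotient ideal, fix one root $t,$ and show the remaining solution set is finite and too small — fewer than the seven points a genuine new SIC set would require — so no SIC set other than the midpoint solutions can occur. Combined with the reduction in the first paragraph and with Theorem~\ref{class} handling the group-covariant case, this shows every SIC set in $\CP^2$ is $\SU(3)$-congruent to a midpoint solution, which is the assertion of Theorem~\ref{strong}. Two points deserve care as likely obstacles. The first, genuinely analytic, is the reduction itself: one must verify that fixing two points and applying the torus moment map really does force \emph{all} seven remaining points onto $\sC$ (rather than merely permitting it) and that the residual phases can be normalized consistently; this rests on the correct-separation identities \eqref{RDe}--\eqref{De} and on the stabilizer computation around \eqref{U1}. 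The second obstacle is organizational: identifying precisely the right ideal $J$ to quotient by (so that exactly the midpoint solutions, and nothing else, are removed) and then certifying the Gr\"obner-basis computation — counting the extra solutions for fixed $t$ and checking the count is incompatible with nine mutually equidistant points. The hard part will be this last certification, since it is where the proof becomes machine-dependent; the preceding geometric reductions, while intricate, are the parts under full analytic control.
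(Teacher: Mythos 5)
Your proposal follows essentially the same route as the paper: fixing $[\z_1],[\z_2]$ by two-point homogeneity, forcing the remaining points onto the pinched torus over $\sC$ (the paper's Lemma~\ref{Zst}), passing to tangent coordinates and the symmetric system of Theorem~\ref{fa}, symmetrizing in $a,b,c,d,$ saturating by the ideal of midpoint solutions via the quotient ideal $I:\left<G\right>,$ and concluding by a Gr\"obner-basis count of the surviving solutions. The two obstacles you flag are exactly where the paper does its work, and the final count is made precise there as $\binom63=20$ required solution triples against at most six roots in the generic case.
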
 

\noindent In the next section, we shall work with yet another
description of the isometry class of a midpoint solution, in which
each circle $C_i$ contains exactly two points of the SIC set.

\sect{Two-point homogeneity}\label{2pH}

Suppose, going forward, that $\SS$ is a SIC set in $\CP^2,$ consisting
of nine points $[\z_i],$ $i=1,\ldots,9.$ Up to the action of the
isometry group, we are free to assume that $\SS$ contains the two
points of $C_1$ represented by the unit vectors
\begin{equation}\label{zz12}
\ts\z_1=\fr1{\sqrt2}(0,1,-\om),\quad \z_2=\fr1{\sqrt2}(0,1,-\om^2),
\end{equation} 
which are a distance $2\pi/3$ apart. This is on account of the
two-point homogeneity of $\CP^2.$ Lemma~\ref{CSI} tells us that any
other point $[\z]$ of $\SS$ must satisfy
\begin{equation}\label{must}
\big|\big<\z,\z_j\big>\big|^2 = \fr14\|\z\|^2,\qquad j=1,2.
\end{equation}
Using this equation, we can prove another lemma that emphasizes the
important role played by the incircle $\sC.$

\begin{lem}\label{Zst} 
The moment map $\mu$ projects any remaining point $[\z]$ of $\SS$ to a
point of $\sC.$ Indeed, we may take $\z$ to be a unit vector of the form
\begin{equation}\label{zst}
\z(\si,\phi) = \sqrt{\!\fr23}\Big(\,e^{i\si}\cos \phi,\ 
\cos(\phi+\fr23\pi),\,\cos(\phi+\fr43\pi)\,\Big)
\end{equation}
for some $\si\in(-\pi,\pi]$ and some $\phi\in(-\ha\pi,\ha\pi].$
\end{lem}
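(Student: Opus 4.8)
The plan is to extract consequences from \eqref{must} coordinate by coordinate and show they force $\mu([\z])$ onto $\sC$. Write $\z=(w_1,w_2,w_3)$ with $\|\z\|=1$, and set $x_i=|w_i|^2$, so $(x_1,x_2,x_3)=\mu([\z])$ lies in the simplex $\sT$. Since $\z_1=\frac1{\sqrt2}(0,1,-\om)$ and $\z_2=\frac1{\sqrt2}(0,1,-\om^2)$, we have
\begin{equation*}
\big|\big<\z,\z_1\big>\big|^2=\ts\frac12\big|w_2-\ol\om\,w_3\big|^2,\qquad
\big|\big<\z,\z_2\big>\big|^2=\ts\frac12\big|w_2-\ol\om^2 w_3\big|^2,
\end{equation*}
and \eqref{must} says both equal $\frac14$. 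First I would add the two equations: expanding, $|w_2|^2+|w_3|^2-2\Re\big[(\om+\om^2)\ol w_2w_3\big]=1$, i.e.\ $x_2+x_3+2\Re(\ol w_2w_3)=1$ using $\om+\om^2=-1$. Since $x_1+x_2+x_3=1$, this reads $\Re(\ol w_2 w_3)=\frac12 x_1$. Subtracting the two equations instead gives $\Im(\ol w_2 w_3)$ in terms of the data — or, more cleanly, I would also use the product/third symmetric combination. The key point is that the two conditions in \eqref{must} pin down \emph{both} the real and imaginary parts of $\ol w_2 w_3$ as explicit functions of $x_1,x_2,x_3$; then $|w_2|^2|w_3|^2 = x_2x_3 = (\Re \ol w_2w_3)^2+(\Im\ol w_2 w_3)^2$ becomes a polynomial relation among $x_1,x_2,x_3$ alone.

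Next I would simplify that relation. With $\Re(\ol w_2w_3)=\frac12 x_1$ in hand, the analogous computation for the difference of the squared inner products yields $\Im(\ol w_2 w_3)$; the cleanest route is to note $|w_2-\ol\om w_3|^2$ and $|w_2-\ol\om^2 w_3|^2$ together with the Weyl-group-symmetric identity force a constraint equivalent to $x_1^2+x_2^2+x_3^2=\frac12$ after substituting $x_2x_3=(\frac12 x_1)^2+\Im(\ol w_2w_3)^2$ and eliminating. (Concretely: expanding $\big|\big<\z,\z_1\big>\big|^2-\big|\big<\z,\z_2\big>\big|^2=0$ gives $\Re\big[(\om-\om^2)\ol w_2 w_3\big]=0$, so $\Im(\ol w_2w_3)=0$ because $\om-\om^2=i\sqrt3$; hence $x_2x_3=\frac14 x_1^2$, and feeding this back together with $x_2+x_3=1-x_1$ into $x_1^2+x_2^2+x_3^2 = x_1^2+(1-x_1)^2-2x_2x_3 = x_1^2+(1-x_1)^2-\frac12 x_1^2 = \frac12$ after using the constraint. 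I should double-check the bookkeeping, but the mechanism is exactly this.) By definition $\sC$ is the intersection of $x_1+x_2+x_3=1$ with $x_1^2+x_2^2+x_3^2=\frac12$, so $\mu([\z])\in\sC$.

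Finally, for the parametrization: once $\mu([\z])\in\sC$, Lemma~\ref{paraC} gives $(x_1,x_2,x_3)=\frac23(\cos^2\phi,\cos^2(\phi+\frac23\pi),\cos^2(\phi+\frac43\pi))$ for a unique $\phi\in[-\frac12\pi,\frac12\pi)$ (equivalently $(-\frac12\pi,\frac12\pi]$ after adjusting the endpoint convention, which is harmless). Thus $|w_j|=\sqrt{\frac23}\,|\cos(\phi+\frac{2(j-1)}{3}\pi)|$. Using the residual torus action \eqref{T} — which does not move $[\z_1],[\z_2]$ beyond a phase we may absorb, since $z_1=0$ on both and we have a $\U(1)$ of stabilizing freedom as in the discussion around \eqref{U1} — I can rotate the phases so that $w_2$ and $w_3$ are real and nonnegative, matching the signs of $\cos(\phi+\frac23\pi)$ and $\cos(\phi+\frac43\pi)$; the relation $\Im(\ol w_2 w_3)=0$ derived above is exactly what makes this simultaneously achievable. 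What remains is the phase $e^{i\si}$ on the first slot, which is unconstrained by \eqref{must} (the conditions involve only $w_2,w_3$), giving the stated form \eqref{zst} with $\si\in(-\pi,\pi]$.

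I expect the main obstacle to be the sign/phase bookkeeping in the last paragraph: verifying that a single element of the stabilizer torus simultaneously makes \emph{both} $w_2$ and $w_3$ real with the correct signs, rather than just one of them. This works precisely because $\Im(\ol w_2w_3)=0$ forces $w_2$ and $w_3$ to have the same or opposite argument, so one torus rotation handles both; but it needs to be stated carefully, and one must track that $\z_1,\z_2$ are preserved (their first coordinate is $0$, and the common phase on slots $2,3$ can be normalized away because $\z_1,\z_2$ already carry fixed phases there that match). The algebraic step showing $\Im(\ol w_2w_3)=0$ and $x_2x_3=\frac14 x_1^2$, by contrast, is a short direct computation from the difference of the two equations in \eqref{must}.
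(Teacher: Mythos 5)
Your strategy is sound and is essentially the paper's: the paper normalizes $z_2=1$ and deduces that $z_3$ is real from the equality of the two squared inner products, which is exactly your $\Im(\ol w_2w_3)=0$, before verifying the quadric condition defining $\sC.$ However, the crux of your computation contains an arithmetic slip that makes the displayed chain of identities false as written. Each condition in \eqref{must} reads $|w_2-\ol\om\, w_3|^2=\tfrac12$ (resp.\ $|w_2-\ol{\om^2}w_3|^2=\tfrac12$), so their \emph{sum} is $2(x_2+x_3)+2\Re(\ol w_2w_3)=1,$ giving $\Re(\ol w_2w_3)=\tfrac12-(x_2+x_3)=x_1-\tfrac12,$ not $\tfrac12x_1.$ Consequently $x_2x_3=(x_1-\tfrac12)^2,$ not $\tfrac14x_1^2,$ and your final identity $x_1^2+(1-x_1)^2-\tfrac12x_1^2=\tfrac12$ is simply not true (test $x_1=\tfrac23,$ which is the point $p_1\in\sC$: the left side is $\tfrac13$). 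With the corrected constant one gets $x_1^2+(1-x_1)^2-2(x_1-\tfrac12)^2=\tfrac12$ identically, so the mechanism you describe does prove $\mu([\z])\in\sC$; you flagged the bookkeeping yourself, but the correction is needed for the proof to stand.

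On the final paragraph: the phase issue is simpler than you fear. You should not invoke the stabilizer torus, since applying a nontrivial torus element genuinely moves the point $[\z]$ (the lemma asserts that $[\z]$ itself, not some isometric image of it, has the form $\z[\si,\phi]$). All that is needed is a choice of representative vector, i.e.\ one overall phase on $\z.$ Because $\ol w_2w_3\in\R,$ a single overall phase makes $w_2$ and $w_3$ simultaneously real, and the product $r_2r_3=\ol w_2w_3=x_1-\tfrac12=\tfrac23\cos(\phi+\tfrac23\pi)\cos(\phi+\tfrac43\pi)$ pins down the relative sign correctly, an overall sign being absorbed projectively; the residual phase on the first slot is your $e^{i\si}.$ This is precisely what the paper's normalization $z_2=1$ accomplishes in one stroke. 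The degenerate case $w_2=0$ must still be treated separately (it gives $\phi=-\pi/6$), as the paper does.
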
 

\noindent To lighten the notation, we shall write $\z[\si,\phi]$ as a
shorthand for $[\z(\si,\phi)],$ so that square brackets on either side
of `$\z$' indicate a projective class.\smallbreak

Lemma~\ref{paraC} tells us that $\z[\si,\phi]$ lies over $\sC.$
Observe that $\mu(\z[\si,\phi])$ depends only on the \emph{angle}
$\phi$ measured around $\sC,$ and not on the \emph{phase} $\si.$
Moreover, as $\si$ and $\phi$ vary, $\z[\si,\phi]$ parametrizes a
pinched two-torus, the pinch point being
\begin{equation}\ts
\z[\si,-\frac12\pi]=\z[\si,\frac12\pi]=[0,1,-1],
\end{equation}
which is evidently independent of $\si.$ Having chosen $[\z_1],[\z_2]$
on $C_1,$ we can see that any third point of $C_1\cap\SS$ must be this
point, which explains the pinching. One should note that
$\z(\si,\phi)=-\z(\si,\phi+\pi),$ which is why $\phi=-\pi/2$ is
excluded from the non-projective representation \eqref{zst}.\medbreak

\begin{proof}[Proof of Lemma~\ref{Zst}] 
Let us suppose that $[\z] = [z_1,z_2,z_3]\in\SS.$ If $z_2=0$ then
$[\z]\in C_2$ and \eqref{zst} will be valid for $\phi=-\pi/6.$ We may
therefore take $z_2=1$ and set $z_1=a,$ $z_3=c$ where $a,c\in\C.$ Then
by assumption, we have
\begin{equation} 
|1-c\ol\om|^2 = |1-c\om|^2,
\end{equation}
which implies that $c$ is real, and
\begin{equation} 
[\z]=\left[a,\ 1,\ \pm|c|\right].
\end{equation}
Using \eqref{must}, we see that
\begin{equation} 
\fr12(1\pm|c|+|c|^2)=\fr14(|a|^2+1+|c|^2),
\end{equation}
so $|a|^2= (1\pm|c|)^2,$ and
\begin{equation} 
|a|^2+1+|c|^2= 2(1 \pm|c|+|c|^2).
\end{equation}
Therefore,
\begin{equation} \ba{rcl} 
|a|^4+1+|c|^4 
&=& 2\pm4|c|+6|c|^2\pm4|c|^3+2|c|^4\y 
&=& \fr12(|a|^2+1+|c|^2)^2.
  \ea\end{equation} 

It follows that $\mu$ does indeed map $[\z]$ into $\sC.$ In view of
Lemma~\ref{paraC}, we must be able to express $[\z]$ in the stated
form for some $e^{i\si}\in\U(1).$
\end{proof}

The points $[\z_1],[\z_2]$ are both fixed by the subgroup $\U(1)$ of \eqref{T}
generated by
\begin{equation}\label{U1}
\left(\!\ba{ccc} 
e^{-2ix} & 0 & 0\\
0 & e^{ix} & 0\\
0 & 0 & e^{ix}
\ea\!\right),
\end{equation}
so we may assume that a third point of $\SS$ is $\z[0,\th]$. The next
result shows that there \emph{does} exist a SIC set containing this
point for any $\th.$

\begin{prop}\label{7} 
For any $\th\in[-\ha\pi,\ha\pi),$ the six points 
\begin{equation}\label{6pts}
\ba{lll}
[0,1,-\om]=[\z_1],\qquad & [0,1,-\om^2]=[\z_2] &\in C_1,\\[5pt]
[1,0,-\om]=\z[-\frac23\pi,\!-\sa\pi],\quad &
[1,0,-\om^2]=\z[\frac23\pi,\!-\sa\pi] &\in C_2,\\[5pt]
[1,-\om,0]=\z[-\frac23\pi,\sa\pi], &
[1,-\om^2,0]=\z[\frac23\pi,\sa\pi] &\in C_3,
\ea\end{equation}
combine with three points
\begin{equation}\label{3pts}\ts
\z[0,\th-\frac13\pi],\qquad \z[0,\th],
\qquad\z[0,\th+\frac13\pi]
\end{equation}
to form a SIC set isometric to a midpoint solution.
\end{prop}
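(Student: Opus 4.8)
The plan is to verify directly that the nine explicit points listed in \eqref{6pts}--\eqref{3pts} are mutually correctly separated, and then to exhibit an element of $\SU(3)$ carrying this configuration to a midpoint solution. The first observation is that the six points in \eqref{6pts} are precisely the Weyl-Heisenberg orbit of $[0,1,-\om]$ together with its images under the cyclic permutation $W$ of \eqref{cyc}; more concretely, they are the three points of $C_1$ collapsed to two (since the third $C_1$-point coincides with the pinch point $[0,1,-1]$, which is \emph{not} among our nine), together with the three points in each of $C_2$ and $C_3$. So in fact I would first check that these six, on their own, are part of a midpoint solution: three equidistant points $[0,1,-\om^j]$ in $C_1$, three in $C_2$, three in $C_3$, which is a midpoint solution by the construction preceding Definition~6.2 of the excerpt (the discussion of \eqref{cimi}). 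Dropping one point from each triple leaves six correctly separated points, and the translation argument around Figure~2 shows intra-circle separations are $1/4$.

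Next I would compute the cross ratios between the three new points \eqref{3pts} and the six old ones. The three new points lie on the pinched torus over $\sC$, at angles $\th-\tfrac13\pi,\ \th,\ \th+\tfrac13\pi$, all with phase $\si=0$; by Lemma~\ref{nicefibres} and the computation in its proof (using that these three angles are equally spaced by $2\pi/3$ around $\sC$ only after the $H$-action — here the relevant fact is rather that $\z[0,\phi]$ for $\phi$ differing by $\tfrac13\pi$ are correctly separated, which I would verify using the explicit form \eqref{zst} and the trigonometric identity $\cos^2 A+\cos^2 B+\cos^2 C=\tfrac32$ for $A+B+C$ a multiple of $\pi$ with pairwise differences $\tfrac23\pi$), the three new points are correctly separated from each other. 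The cross ratio of $\z[0,\phi]$ with $[0,1,-\om]\in C_1$ is $|\langle\z(0,\phi),\z_1\rangle|^2$; expanding via \eqref{zst} and \eqref{zz12} gives $\tfrac13|\cos(\phi+\tfrac23\pi)-\om\cos(\phi+\tfrac43\pi)|^2$, and one checks this equals $\tfrac14$ for \emph{all} $\phi$ — this is exactly the content of \eqref{must}, which holds by construction since $\z[0,\phi]$ lies over $\sC$. The cross ratios with the $C_2$ and $C_3$ points are handled the same way, using the identification of those points with $\z[\pm\tfrac23\pi,\mp\tfrac16\pi]$ and $\z[\pm\tfrac23\pi,\tfrac16\pi]$ given in \eqref{6pts}, so that both arguments lie over $\sC$ and the $H$-action together with the phase relations reduces the six computations to one or two. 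In each case the key is that correct separation of two points over $\sC$ at phases differing appropriately and angles differing by a multiple of $\tfrac13\pi$ reduces, via \eqref{JK}--\eqref{abc=} and $\Delta=0$, to an identity in $\th$ that holds identically.

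Finally, to see the resulting nine-point SIC set is isometric to a midpoint solution, I would produce an explicit torus element or permutation matrix. The cleanest route: the nine points consist of the six in \eqref{6pts} plus three on the pinched torus; but the six in \eqref{6pts} already determine (as noted) a partial midpoint configuration with one point missing from each $C_i$, and the only way to complete \emph{any} SIC set containing these six is forced. Alternatively, and more robustly, I would apply the stabilizer $\U(1)$ of \eqref{U1} and the Weyl group $\Z_3$ to move $\z[0,\th]$ and check that for a suitable group element the three points \eqref{3pts} land in $C_1$, $C_2$, $C_3$ respectively, reproducing the standard form — this is the same mechanism used just after Definition~6.2 to show the midpoint construction is a one-parameter family. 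I expect the main obstacle to be bookkeeping: keeping the phases $\si$ and angles $\phi$ straight across the six points of \eqref{6pts} when they are written in the $\z[\si,\phi]$ notation, and confirming that the trigonometric identities collapse all thirty-six pairwise cross ratios to $\tfrac14$ without a sign or factor-of-$\om$ slip. Everything else is a routine, if lengthy, direct verification.
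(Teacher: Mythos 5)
Your plan splits into (a) a direct check that all 36 pairs of the nine points are correctly separated, and (b) the production of an isometry carrying the configuration to a midpoint solution. Part (a) is workable in principle, though laborious and with some loose ends (several of the pairs you defer to Lemma~\ref{corr} have $\psi=\pm\frac16\pi$, which that lemma explicitly excludes, and the appeals to \eqref{must} and to $\Delta=0$ conflate conditions to be verified with facts already established). The genuine gap is in (b). Every isometry you propose to use --- a torus element, a permutation matrix, the stabilizer \eqref{U1}, the Weyl group --- normalizes the maximal torus $T$ and therefore descends to a symmetry of the moment image $\sT$, carrying fibres of $\mu$ to fibres. Such a map sends the three points \eqref{3pts}, which lie over points of $\sC$ at angles $\th,\th\pm\frac13\pi$, to points lying over the images of those angles; for generic $\th$ these are never the midpoints $m_i$, so the images can never land in $C_1\sqcup C_2\sqcup C_3$. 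No torus-normalizing isometry can turn $\SSt$ into a midpoint solution. Your fallback, that ``the only way to complete any SIC set containing these six is forced,'' is precisely the kind of uniqueness assertion the paper is in the business of proving and cannot be assumed at this stage.

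The paper's proof instead exhibits a single off-torus unitary, the discrete-Fourier-type Clifford element $M$ with $M\kern-1pt AM^{-1}=\om B$ (see Example~\ref{eigen}), which genuinely interchanges the roles of the ``phase'' circles ($\si$ constant) and the ``angle'' circles ($\phi$ constant): it sends $\z[0,\th]$ to $[e^{2i\th},\om,0]\in C_3$ and permutes the six points \eqref{6pts} into $C_1\sqcup C_2$. That one computation simultaneously shows the nine points form a SIC set (their image is a midpoint solution, already known to be one) and that they are isometric to a midpoint solution, so the 36-pair verification is never needed. To repair your argument you must produce such an isometry outside the normalizer of $T$; without it the conclusion ``isometric to a midpoint solution'' does not follow, and even the SIC property would rest on completing every pairwise check in (a).
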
 

\noindent We shall denote this SIC set by $\SSt;$ it is illustrated in
Figure~3.

\begin{proof} 
Consider the matrix
\begin{equation} 
M = \frac1{\sqrt3}\!\left(\ba{ccc} \om^2 & \om & 1 \\ 1 & \om &
\om^2\\ 1 & 1 & 1 \ea\right).
\end{equation} 
It is easy to check that $M\in\U(3)$ and that $M^3=i\om^2I.$ A
calculation shows that
\begin{equation}\ts 
M\kern1pt\z(0,\th)^{\!\top} = 
\fr1{\sqrt2}\!\left(e^{i\th}\om^2,\ e^{-i\th},\ 0\right)^{\!\top},
\end{equation}
so that $M$ maps $\z[0,\th]$ to the point $[e^{2i\th},\,\om,\,0]$
of $C_3.$ Moreover, $M$ maps the array \eqref{6pts} to the array
\begin{equation}\ba{ll}
[0,1,-1], & [1,0,-\om],\\[5pt] 
[1,0,-\om^2],\qquad &[0,1,-\om^2],\\[5pt] 
[0,1,-\om], & [1,0,-1] \ea\end{equation} of points
in $C_1\sqcup\,C_2.$ It follows that $M$ maps $\SSt$ onto three
triples of points, each triple belonging to $C_i$ for some $i=1,2,3.$
\end{proof}

The first six points \eqref{6pts} of $\SSt$ do not depend on $\th,$
whereas the last triple of points can be rotated at will (by varying
$\th$) around a circle $C_3'$ covering $\sC$. For example, $\z[0,0]$
lies over the point $p_1$ of $\sC$ diametrically opposite $m_1$ (see
\eqref{p12}).

\begin{figure}[b]
\scalebox{.72}{\includegraphics{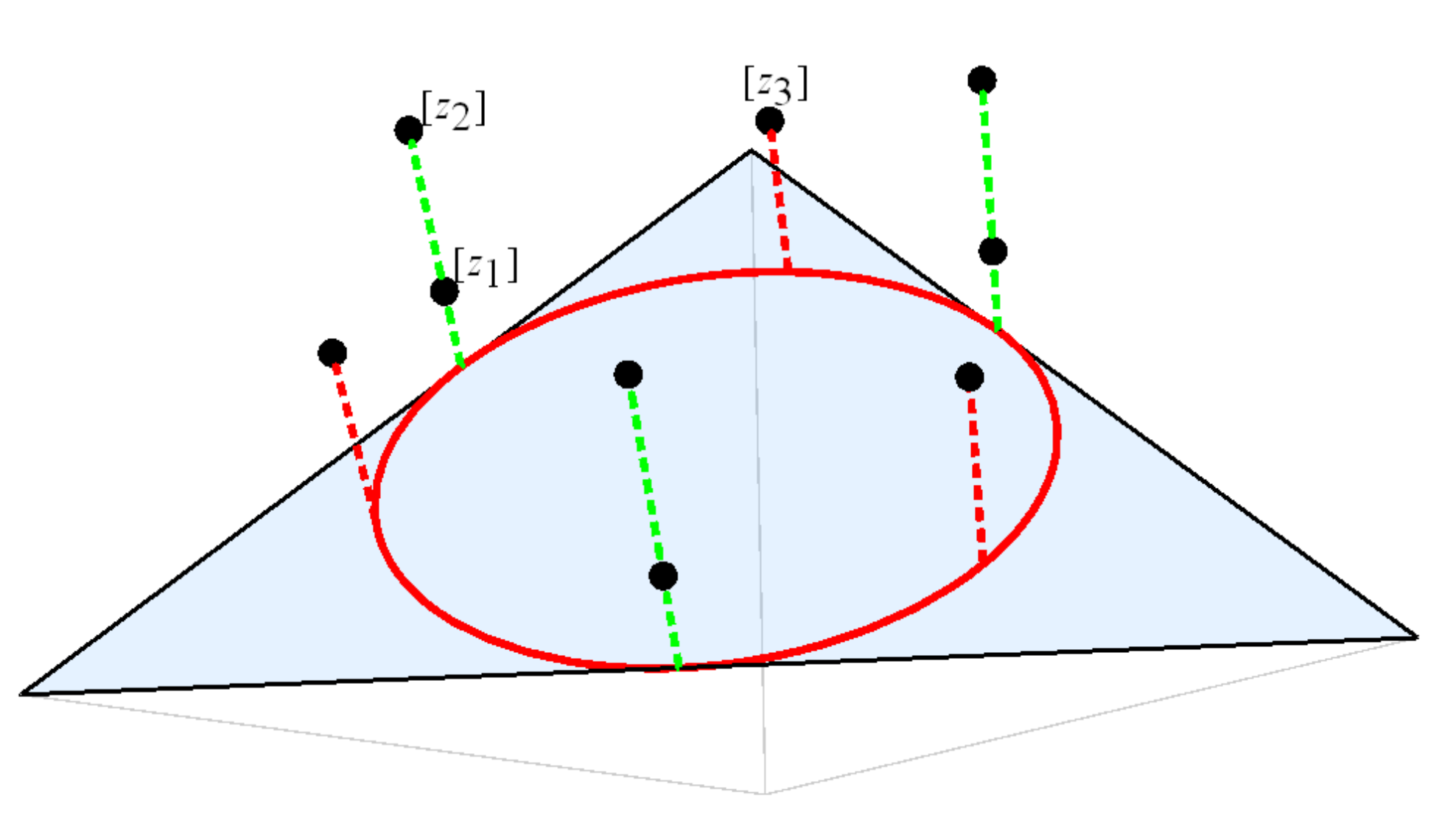}}
\caption{The SIC set $\SSt$ defined by Proposition~\ref{7} contains
  two points in each circle $C_i$ (including $[\z_1],[\z_2]$ in $C_1$)
  that do not depend on $\th,$ together with a triple of points
  (including $[\z_3]$) that $\mu$ also projects to $\sC$ for which
  $\th$ represents the angle around $\sC.$} 
\end{figure}

\begin{rem}\rm 
Nine points in $\CP^2$ are the inflection points of a non-singular
cubic curve if and only if the line determined by any two of them
contains a third. This being the case, there are twelve such lines
altogether, on which the nine points lie by threes, with four of the
twelve lines through each of the nine points, thus forming the
so-called Hesse configuration $\{9_4, 12_3\}$. For the points of
$\SSt$ to arise in this way, and as described by Hughston \cite{Hugh}
and Dang \etal \cite{DBBA}, the projective line $\LL_1\cong\CP^1$
generated $[\z_1],[\z_2]$ must contain a third point of $\SSt.$ But
$\LL_1$ is the inverse image by $\mu$ of the side of $\sT$ containing
$m_1,$ and will only contain another point if $\th$ assumes one of the
values $\pm\pi/2,\pm\pi/6.$ This occurs when the three red legs (the
ones generated by $[\z_3]$ by rotation by $2\pi/3$) in Figure~3 line
up with the green legs (the ones over the midpoints), and $\SSt$ is
then itself a special midpoint solution.
\end{rem} 

\begin{exa}\label{eigen}\rm 
The unitary transformation $M$ maps $C_3'$ to $C_3.$ It permutes the
elements of the SIC set $(\WH)\cdot[0,1,-1],$ though it fixes none of
them. The matrices
\begin{equation}
A = \left(\!\ba{ccc}0&1&0\\0&0&1\\1&0&0\ea\!\right),\quad
B = \left(\!\ba{ccc}1&0&0\\0&\om&0\\0&0&\om^2\ea\!\right)
\end{equation}
generate $W$ and $H$ respectively, and satisfy 
\begin{equation}\label{conj}
M\kern-1pt AM^{-1} = \om B,\qquad
M\kern-1pt BM^{-1} = \om^2A^{-1}B^{-1}.
\end{equation}
It follows that $M$ is an element of the so-called Clifford group, the
normalizer of $\WH$ in $\U(3).$ Modulo phase, this normalizer is
isomorphic to a semidirect product
$\mathrm{SL}(2,\Z_3)\ltimes(\Z_3)^2$ (see Appleby \cite{App} and
Horrocks-Mumford \cite{HM}). Equation \eqref{conj} asserts that $M$
induces the automorphism of $\WH$ given by \begin{equation}
  \left(\!\ba{cc}0&-1\\1&-1\ea\!\right)\in\mathrm{SL}(2,\Z_3).
\end{equation}
It is conjectured that a fiducial vector can always be found in an
eigenspace of some element of the Clifford group (see Zauner
\cite{Z}). In the case of $M,$ a computation shows that any one of its
eigenvectors defines a point of $\CP^2$ whose orbit under $\WH$ is a
configuration of nine points arranged in nine lines. Each of the 27
pairs of points lying on one of the nine lines has a cross ratio
$\kappa=1/3,$ whereas the remaining nine pairs of points have
$\kappa=0.$ Compared to the Hesse configuration above, this means that
three of the twelve triples of points are not collinear, but each of
these three triples forms an orthonormal basis of $\C^3.$
\end{exa}

\begin{rem}\rm 
If an isometry is to fix both $[\z_1]$ and $[\z_2],$ there is no
ambiguity remaining in the choice of $\phi\in[\-\pi/2,\pi/2)$ in
  Lemma~\ref{Zst}. However, we are at liberty to interchange $[\z_1]$
  and $[\z_2]$ by applying either complex conjugation or the unitary
\begin{equation}
\left(\!\ba{ccc} 1 & 0 &
  0\\0 & 0 & 1\\0 & 1 & 0\ea\!\right).
\end{equation} 
The former has the effect of replacing $\si$ by $-\si,$ and the latter
of replacing $\phi$ by $-\phi$ in \eqref{zst}. In particular, the
congruence class of the unordered set $\SSt$ uniquely specifies
$|\th|.$ This fact can also be verified using a triple product that
measures the signed area of the planar geodesic triangle spanned by
three points. See, for example, Brody and Hughston \cite{BH} and
references cited therein.
\end{rem}

\sect{Trigonometry}

From now on, we shall assume that $\SS$ is a SIC set in $\CP^2$ that
contains the points $[\z_1],[\z_2]$ defined by
\eqref{zz12}. Lemma~\ref{Zst} tells that any other point of $\SS$ has
the form
\begin{equation}\label{ZST} 
\z[\si,\phi]=\Big[\,e^{i\si}\cos\phi,\
\cos(\phi+\fr23\pi),\,\cos(\phi+\fr43\pi)\om^2\,\Big],
\end{equation} 
where $(\si,\phi)$ belongs to the rectangle
\begin{equation}
\sR = (-\pi,\pi]\times(-\fr12\pi,\fr12\pi].
\end{equation} 
The next result, from which many others follow, translates distance
into the new `rectangular' coordinates.

\begin{lem}\label{corr} 
Suppose that
$\phi,\psi\in(-\ha\pi,\ha\pi)\setminus\{-\sa\pi,\sa\pi\}.$ Then the
points $\z[\si,\phi]$ and $\z[\tau,\psi]$ are the correct distance
$2\pi/3$ apart if and only if
\begin{equation}\label{CS}
  1-\cos(\si-\tau) = 
\frac{9(1+2\cos(2(\phi-\psi)))\sec\phi\sec\psi}
{16(\cos\phi\cos\psi + 3\sin\phi\sin\psi)}.
\end{equation}
\end{lem}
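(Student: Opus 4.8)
The plan is to compute $|\langle\z(\si,\phi),\z(\tau,\psi)\rangle|^2$ directly from the explicit parametrization \eqref{ZST} and set it equal to $\ha\cdot\fr14\|\z(\si,\phi)\|^2\|\z(\tau,\psi)\|^2$, the condition for the correct separation $2\pi/3$ (recall $\kappa=1/4$ and, by Lemma~\ref{CSI}, the distance is $2\pi/3$ precisely when the cross ratio equals $\fr14$). Since $\z(\si,\phi)$ is a unit vector for $\phi\ne\pm\fr12\pi$ (this was the point of the normalization $\sqrt{2/3}$ in \eqref{zst}), and likewise $\z(\tau,\psi)$, the right-hand side is just $\fr14$, so the condition is $|\langle\z(\si,\phi),\z(\tau,\psi)\rangle|^2=\fr14$. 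Write $c_0=\cos\phi$, $c_1=\cos(\phi+\fr23\pi)$, $c_2=\cos(\phi+\fr43\pi)$ and similarly $c_0',c_1',c_2'$ for $\psi$. Then
\[
\langle\z(\si,\phi),\z(\tau,\psi)\rangle
= \fr23\bigl(e^{i(\tau-\si)}c_0c_0' + c_1c_1' + c_2c_2'\bigr),
\]
so $|\langle\z,\z'\rangle|^2 = \fr49\bigl(A^2 + B^2 + 2AB\cos(\si-\tau)\bigr)$ with $A=c_0c_0'$ and $B=c_1c_1'+c_2c_2'$. Equating this to $\fr14$ gives
\[
\cos(\si-\tau) = \frac{9/16 - A^2 - B^2}{2AB},
\]
and the whole problem reduces to expressing $A$, $B$, $A^2+B^2$ and $AB$ in terms of $\phi,\psi$ and massaging the result into the form \eqref{CS}, i.e. into an expression for $1-\cos(\si-\tau)$.

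The main work is the trigonometric identity manipulation. First I would record the standard sums: $c_0+c_1+c_2=0$, $c_0^2+c_1^2+c_2^2=\fr32$, and the product-to-power-sum relation $c_0c_1+c_1c_2+c_2c_0=-\fr34$. For the cross terms, the key computation is $c_0c_0'$, $c_1c_1'$, $c_2c_2'$ individually via product-to-sum: $c_j c_j' = \ha\cos(\phi-\psi) + \ha\cos\bigl((\phi+\psi)+\fr{4j}{3}\pi\bigr)$. Summing the three second terms over $j=0,1,2$ gives zero, which re-confirms $A+B = c_0c_0'+c_1c_1'+c_2c_2' = \fr32\cos(\phi-\psi)$; I will also need $A-B$, which comes out (after collecting the $j=0$ term against the sum of $j=1,2$) as something proportional to $\cos(\phi+\psi)$ plus a constant — concretely $A = \ha\cos(\phi-\psi)+\ha\cos(\phi+\psi) = \cos\phi\cos\psi$ trivially, so really the content is in $B = \cos(\phi-\psi)\cdot\fr12 - \ha\cos(\phi+\psi) + (\text{the }j=1,2\text{ oscillatory terms summed})$. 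I would compute $B$ cleanly as $c_1c_1'+c_2c_2'$ using the angle-addition formula $c_j = -\ha\cos\phi \mp \fr{\sqrt3}{2}\sin\phi$ for $j=1,2$ (upper/lower sign), giving $c_1c_1'+c_2c_2' = \ha\cos\phi\cos\psi + \fr32\sin\phi\sin\psi$. Then $A = \cos\phi\cos\psi$ and $B = \ha\cos\phi\cos\psi + \fr32\sin\phi\sin\psi$, so $A+B = \fr32(\cos\phi\cos\psi+\sin\phi\sin\psi) = \fr32\cos(\phi-\psi)$ (consistency check), and the denominator on the right of \eqref{CS} is exactly $2\cdot\fr13\cdot 3\bigl(\cos\phi\cos\psi+3\sin\phi\sin\psi\bigr)$ up to the bookkeeping — indeed $2AB = 2\cos\phi\cos\psi\cdot\ha(\cos\phi\cos\psi+3\sin\phi\sin\psi) = \cos\phi\cos\psi(\cos\phi\cos\psi+3\sin\phi\sin\psi)$.

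From here it is algebra: $1-\cos(\si-\tau) = 1 - \dfrac{9/16 - A^2 - B^2}{2AB} = \dfrac{2AB + A^2 + B^2 - 9/16}{2AB} = \dfrac{(A+B)^2 - 9/16}{2AB}$. Now $(A+B)^2 = \fr94\cos^2(\phi-\psi)$, so the numerator is $\fr94\cos^2(\phi-\psi) - \fr9{16} = \fr9{16}\bigl(4\cos^2(\phi-\psi) - 1\bigr) = \fr9{16}\bigl(2(1+\cos 2(\phi-\psi)) - 1\bigr) = \fr9{16}\bigl(1 + 2\cos 2(\phi-\psi)\bigr)$, using $2\cos^2x = 1+\cos 2x$. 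Dividing by $2AB = \cos\phi\cos\psi(\cos\phi\cos\psi + 3\sin\phi\sin\psi)$ and writing $\cos\phi\cos\psi = 1/(\sec\phi\sec\psi)$ in the numerator's denominator yields precisely \eqref{CS}. The one genuine subtlety to address is the hypothesis $\phi,\psi\notin\{-\fr16\pi,\fr16\pi\}$ together with $\phi,\psi\ne\pm\fr12\pi$: these are exactly the values where $2AB$ could vanish — $\cos\phi\cos\psi=0$ at $\pm\fr12\pi$, and the factor $\cos\phi\cos\psi+3\sin\phi\sin\psi$ vanishes (for $\phi=\psi$) when $\tan^2\phi = -\fr13$... which never happens for real $\phi$, so actually the dangerous locus of the second factor is more subtle; I will note that $\cos\phi\cos\psi + 3\sin\phi\sin\psi = 2\cos(\phi-\psi) - \cos(\phi+\psi)$ and track its zeros, but in any case the excluded values $\pm\fr16\pi$ are precisely the midpoints $m_2,m_3$ where the corresponding fibre of $\mu$ degenerates to a circle rather than a $2$-torus, so the phase $\si$ is not an independent coordinate there and the formula \eqref{CS} loses meaning — hence the hypothesis. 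I expect the main obstacle to be purely clerical: keeping the factors of $\fr23$ and $\ha$ straight through the product-to-sum expansions so that the final constants $9$ and $16$ emerge correctly; there is no conceptual difficulty once $A$ and $B$ are identified.
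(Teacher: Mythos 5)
Your main computation is correct and follows essentially the same route as the paper's proof: with $A=\Gamma_0=\cos\phi\cos\psi$ and $B=\Gamma_1+\Gamma_2=\frac12(\cos\phi\cos\psi+3\sin\phi\sin\psi)$, the separation condition is $A^2+B^2+2AB\cos(\si-\tau)=\frac9{16}$, and your observation that the numerator of $1-\cos(\si-\tau)$ is $(A+B)^2-\frac9{16}=\frac94\cos^2(\phi-\psi)-\frac9{16}=\frac9{16}\big(1+2\cos2(\phi-\psi)\big)$ is a slightly cleaner derivation of the quantity that the paper obtains by saying ``a calculation shows''. (The stray factor ``$\frac12\cdot\frac14$'' in your statement of the separation condition is a slip that does not propagate, since you subsequently use the correct value $\frac14$.)

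The genuine gap is in the step you yourself flag as the one subtlety. Your claim that the excluded values $\pm\frac12\pi,\pm\frac16\pi$ are \emph{exactly} where $2AB$ could vanish is false: the factor $\cos\phi\cos\psi+3\sin\phi\sin\psi=2\cos(\phi-\psi)-\cos(\phi+\psi)$ vanishes along an entire curve in the $(\phi,\psi)$-rectangle (for instance near $\phi\approx0.343\pi$, $\psi\approx-0.057\pi$), not only at the excluded points, and the appeal to the degeneration of the fibres of $\mu$ over $m_2,m_3$ does not address this. The correct statement --- and the part of the paper's proof you are missing --- is that the vanishing of $B$ is incompatible with correct separation under the stated hypotheses: if $B=0$ then the separation condition collapses to $A^2=\frac9{16}$, so $\cos\phi\cos\psi=\pm\frac34$ and $\sin\phi\sin\psi=\mp\frac14$, whence $\cos(\phi-\psi)=\pm\frac12$ and $\tan\phi\tan\psi=-\frac13$; then $\tan(\phi-\psi)=\frac32(\tan\phi-\tan\psi)=\pm\sqrt3$ gives $(\tan\phi+\tan\psi)^2=(\tan\phi-\tan\psi)^2+4\tan\phi\tan\psi=0$, forcing $\phi=-\psi=\pm\frac16\pi$, which is excluded. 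Without this argument you cannot divide by $2AB$, and the ``only if'' direction of the lemma is not established; the converse direction is unproblematic, since the validity of \eqref{CS} presupposes a nonzero denominator and one simply clears it.
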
 

\begin{proof} 
Not only do we have to establish the formula, but we also need to show
that the assumptions imply that the denominator of the fraction is
non-zero. We use the abbreviated notation
\begin{equation}\ba{rcl}
\Ga_0 &=& \cos\phi\cos \psi,\y \Ga_1 &=&
\cos(\phi+\frac23\pi)\cos(\psi+\frac23\pi),\y \Ga_2 &=&
\cos(\phi+\frac43\pi)\cos(\psi+\frac43\pi).
\ea\end{equation} 
The condition on the cross ratio for correct separation is that
\begin{equation} 
\fr49\big|e^{i(\si-\tau)}\Ga_0+\Ga_1+\Ga_2\big|^2=\fr14,
\end{equation}
which gives
\begin{equation} 
2\cos(\si-\tau)\Ga_0(\Ga_1+\Ga_2)+\Ga_0^2+(\Ga_1+\Ga_2)^2=\fr9{16},
\end{equation}
and therefore
\begin{equation}\label{minus9}
32\Ga_0(\Ga_1+\Ga_2)(1-\cos(\si-\tau)) = 
32\Ga_0(\Ga_1+\Ga_2)+ 16\Ga_0^2+16(\Ga_1+\Ga_2)^2 - 9.
\end{equation}
A calculation shows that the right-hand side of \eqref{minus9} is equal to
\begin{equation}
9(1+2\cos(2(\phi-\psi))),
\end{equation}
which vanishes when $\cos(\phi -\psi)=\pm1/2.$ By hypothesis,
$\Ga_0\ne0.$ If
\begin{equation}
\Ga_1+\Ga_2 = \fr12[\cos \phi\cos \psi + 3\sin\phi\sin\psi]
\end{equation}
vanishes, then
\begin{equation} 
\cos \phi\cos \psi +\sin \phi\sin \psi=\cos(\phi-\psi)
=\pm\fr12,
\end{equation}
and hence
\begin{equation} 
\cos \phi\cos \psi=\pm\fr34,\quad\sin \phi\sin \psi=\mp\fr14.
\end{equation}
Now set $x=\tan \phi$ and $y=\tan\psi.$ Then $xy=-1/3$ and it holds
that
\begin{equation} 
\pm\sqrt3=\tan(\phi-\psi)=\frac{x-y}{1+xy}=\fr32(x-y).
\end{equation}
We therefore have
\begin{equation} (x+y)^2=(x-y)^2 +4xy=0,
\end{equation}
and $\phi=-\psi=\pm\pi/6,$ values that are excluded. We may
therefore assume that $\Ga_0(\Ga_1+\Ga_2)\ne0,$ and \eqref{CS}
follows.
\end{proof}

\begin{lem}\label{pi2}
If $\SS$ contains the pinch point $[0,1,-1]$ as well as $[0,1,-\om]$
and $[0,1,-\om^2],$ then $\SS$ is a midpoint solution.
\end{lem}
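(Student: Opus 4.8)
The plan is to use the extra hypothesis --- that the pinch point $[0,1,-1]$ lies in $\SS$ --- to force the latitude $\phi$ of every remaining point of $\SS$ to equal $\pm\pi/6$, and then to count. First I would invoke Lemma~\ref{Zst}: since $\SS$ already contains $[\z_1]=[0,1,-\om]$ and $[\z_2]=[0,1,-\om^2]$, each of the other seven points of $\SS$ has the form $\z[\si,\phi]$ for some $\si\in(-\pi,\pi]$ and $\phi\in(-\ha\pi,\ha\pi]$. One of those seven is the pinch point itself, which \eqref{zst} realises at $\phi=\pm\ha\pi$; since a SIC set has distinct points, the remaining six have latitudes $\phi_1,\dots,\phi_6$ in the \emph{open} interval $(-\ha\pi,\ha\pi)$.

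The one step requiring any work is the cross ratio between the pinch point and a general $\z[\si,\phi]$. Writing $[0,1,-1]$ as the unit vector $\frac1{\sqrt2}(0,1,-1)$ and using \eqref{zst}, the vanishing of the first coordinate of $(0,1,-1)$ means that only the real coordinates $\cos(\phi+2\pi/3)$ and $\cos(\phi+4\pi/3)$ enter; the identity $\cos(\phi+2\pi/3)-\cos(\phi+4\pi/3)=-\sqrt3\sin\phi$ then collapses the inner product to $-\sin\phi$, so the cross ratio is $\sin^2\phi$, independent of the phase $\si$. Because $\SS$ is a SIC set, Lemma~\ref{CSI} forces $\sin^2\phi_i=1/4$, hence $\phi_i\in\{-\pi/6,\,\pi/6\}$, for each $i$. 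I do not expect any serious obstacle here: once this elementary identity is in hand the lemma is essentially a corollary of the set-up.

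To finish I would read off the geometry of $\phi=\pm\pi/6$. By Lemma~\ref{paraC} these latitudes are exactly those of the midpoints $m_2$ and $m_3$; substituting into \eqref{zst} gives $\z[\si,-\pi/6]=[e^{i\si},0,-1]\in C_2$ and $\z[\si,\pi/6]=[e^{i\si},-1,0]\in C_3$. Hence the six remaining points of $\SS$ all lie on $C_2\sqcup C_3$, and none on $C_1$ (each has non-vanishing first coordinate). Since $C_2$ and $C_3$ are contained in the projective lines $\{z_2=0\}$ and $\{z_3=0\}$ respectively, Remark~\ref{HP3} --- no SIC set has four points on a $\CP^1$ --- prevents either from carrying four of the six, so each carries exactly three. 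Together with the three given points $[0,1,-\om^j]$ on $C_1$, this shows $\SS$ consists of three points on each of the three (pairwise disjoint) midpoint circles $C_1,C_2,C_3$ of \eqref{cimi}, i.e.\ $\SS$ is a midpoint solution.
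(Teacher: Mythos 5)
Your proof is correct and follows essentially the same route as the paper's: both reduce the hypothesis to the single separation condition between $[0,1,-1]$ and a general $\z[\si,\phi],$ which via the identity $\cos(\phi+\frac23\pi)-\cos(\phi+\frac43\pi)=-\sqrt3\sin\phi$ forces $\sin^2\phi=\frac14,$ hence $\phi=\pm\pi/6$ and membership of $C_2\sqcup C_3.$ You additionally spell out the final count (three points on each circle, via the no-four-on-a-line observation of Remark~\ref{HP3}), a step the paper leaves implicit.
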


\begin{proof}
By hypothesis, $\SS$ contains three points of the circle $C_1.$ If
$\z[\si,\phi]$ is a fourth point of $\SS,$ then \eqref{ZST} is
correctly separated from $[0,1,-1]$ and
\begin{equation}
\cos(\phi+\fr23\pi)-\cos(\phi+\fr43\pi)=\pm\fr{\sqrt3}2.
\end{equation}
This implies that $\sin\phi=\pm1/2,$ and forces $\z[\si,\phi]$ to lie
on $C_2\sqcup C_3.$ Therefore $\SS$ lies in the disjoint union
$C_1\sqcup C_2\sqcup C_3.$
\end{proof}

One can rewrite \eqref{CS} as
\begin{equation}\label{rew}\ba{rcl}
 \cos(\si - \tau) 
&=& \ds
1-\frac{9(1 + 2 \cos 2(\phi-\psi))}
{4(4\cos^2\phi\cos^2\psi+3\sin 2\phi\sin2\psi)}\\[15pt]
&=& \ds
\frac{-5 + 4\cos2\phi+ 4\cos2\psi- 14\cos2\phi\cos2\psi-6\sin2\phi\sin2\psi}
{4(1 + \cos2\phi+\cos2\psi+\cos2\phi\cos2\psi+ 3\sin2\phi\sin2\psi)}.
\ea\end{equation}
We shall convert the right-hand side into a rational function by setting
\begin{equation}\label{tans}
x=\tan\phi,\quad y=\tan\psi.  
\end{equation} 
In the light of Lemma~\ref{pi2}, we assume from now on that $x,y$ are
finite.

Equation~\eqref{rew} simplifies to
\begin{equation}\label{stxy}
 \cos(\si - \tau) =
\frac{-11+9x^2+9y^2-27x^2y^2-24xy}{16(1+3xy)}.
\end{equation}
If $1+3xy=0,$ then the numerator on top of it must also vanish, so
$x^2+y^2=2/3$ and $(x+y)^2=0.$ Thus (as in the previous proof)
$x=-y=\pm\rec.$ This means that $\z[\si,\phi]$ lies on one of the circles
$C_2,C_3,$ and $\z[\tau,\psi]$ lies on the other, so there are no
restrictions on $\si$ and $\tau.$

The main result of this section is the following, which establishes a
criterion for the existence in $\CP^2$ of five points that are
correctly separated from one another.

\begin{theorem}\label{fa}
Suppose that $\z[\si,\phi],\z[\tau,\psi],\z[\upsilon,\chi]$ are three
points of $\CP^2,$ a distance $2\pi/3$ away from each other (and from
$[\z_1],[\z_2]$). Set $p=x+y+z,$ $q=yz+zx+xy,$ $r=xyz,$ where
$x=\tan\phi,$ $y=\tan\psi,$ $z=\tan\chi.$ Then $F(p,q,r)=0$ where
\begin{equation}\label{F0}\ba{c}
F(p,q,r)= 9 - 22 p^2 + 9 p^4 + 87 q - 126 p^2 q + 27 p^4 q + 298 q^2 - 226 p^2q^2\\ 
+ 24 p^4 q^2+ 414 q^3 - 138 p^2 q^3 + 189 q^4 + 27 q^5 - 3 p r - 50 p^3 r - 15 p^5 r\\ 
+ 88 p q r- 48 p^3 q r + 234 p q^2 r + 18 p^3 q^2 r - 144 p q^3 r + 81 p q^4 r + 189 r^2\\
 -  480 p^2 r^2 - 153 p^4 r^2 + 1398 q r^2 - 306 p^2 q r^2 +  2736 q^2 r^2- 486 p^2 q^2 r^2\\
 + 810 q^3 r^2 + 243 q^4 r^2 -  558 p r^3 - 486 p^3 r^3 + 2376 p q r^3 - 810 p q^2 r^3\\
 + 567 r^4 -  162 p^2 r^4 + 6399 q r^4 + 486 q^2 r^4 + 1701 p r^5 + 2187 r^6.
\ea\end{equation}
\end{theorem}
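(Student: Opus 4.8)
The plan is to start from the three scalar equations
$$\cos(\si-\tau)=R(x,y),\quad \cos(\tau-\upsilon)=R(y,z),\quad \cos(\upsilon-\si)=R(z,x),$$
where $R(x,y)=\dfrac{-11+9x^2+9y^2-27x^2y^2-24xy}{16(1+3xy)}$ is the right-hand side of \eqref{stxy} (valid for finite $x,y$ by Lemma~\ref{pi2}), and to eliminate the three phase differences $\si-\tau$, $\tau-\upsilon$, $\upsilon-\si$. These three angles are not independent: they sum to $0$ mod $2\pi$. First I would record the elementary identity that if $\alpha+\beta+\gamma\equiv0$ and we write $c_1=\cos\alpha$, $c_2=\cos\beta$, $c_3=\cos\gamma$, then
$$c_1^2+c_2^2+c_3^2-2c_1c_2c_3-1=0.$$
(This is the familiar relation $\cos^2\alpha+\cos^2\beta+\cos^2\gamma+2\cos\alpha\cos\beta\cos\gamma=1$ specialised to $\gamma=-(\alpha+\beta)$; it is exactly the vanishing of the Gram determinant of three unit vectors in the plane making those pairwise angles, or equivalently it comes from squaring $c_3=\cos(\alpha+\beta)=c_1c_2-s_1s_2$ and using $s_i^2=1-c_i^2$.) Substituting $c_1=R(x,y)$, $c_2=R(y,z)$, $c_3=R(z,x)$ into this identity produces a single polynomial relation among $x,y,z$ after clearing the common denominator $16^2(1+3xy)(1+3yz)(1+3zx)$ — or rather its square, since the identity is quadratic in each $c_i$.

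Next I would clear denominators and check that the resulting polynomial in $\Z[x,y,z]$ is symmetric under all permutations of $x,y,z$: $R$ is manifestly symmetric in its two arguments, and the Gram identity is symmetric in $c_1,c_2,c_3$, so the only thing to verify is that the denominator-clearing is done symmetrically, which it is. By the fundamental theorem of symmetric functions the cleared polynomial can then be rewritten in terms of the elementary symmetric functions $p=x+y+z$, $q=yz+zx+xy$, $r=xyz$; carrying out this rewriting (a routine but lengthy computer-algebra step) yields a polynomial $F(p,q,r)$, and I claim that after removing any spurious constant factor it is precisely \eqref{F0}. To pin down that no extra factors have been introduced: the factors $(1+3xy)$ etc.\ that were cleared are the denominators of the $c_i$, which are nonzero on the locus of genuine correctly-separated configurations (again by the argument in the proof of Lemma~\ref{corr} and the paragraph after \eqref{stxy}, the vanishing of $1+3xy$ forces $x=-y=\pm\rec$, a case already handled), so the vanishing locus of $F$ contains the configuration locus; and conversely $F$ as written is obtained from the construction, so equality with \eqref{F0} is a finite polynomial identity that one verifies by expansion.

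The main obstacle is purely computational: the honest expansion of the Gram identity with $R$ substituted, followed by reduction to $p,q,r$, is a sizeable symbolic computation (the degree-$6$-in-$r$, degree-$5$-in-$p$ shape of \eqref{F0} shows it is not small), and one must be careful that the symmetrisation step does not silently drop a factor or pick up the wrong normalisation. I would do this in a computer-algebra system and then \emph{independently} certify the output by (i) substituting numerically a known midpoint-solution value of $\th$ — recall from Proposition~\ref{7} and \eqref{3pts} that $(x,y,z)=(\tan(\th-\tfrac13\pi),\tan\th,\tan(\th+\tfrac13\pi))$ must satisfy $F=0$ for every $\th$, so the one-parameter curve it traces must lie on $\{F=0\}$ — and (ii) checking the special values $\th=\pm\pi/2,\pm\pi/6$ corresponding to midpoint configurations where $x,y,z$ become a permutation of $\{0,\pm\rec\}$ (or a point at infinity, handled separately). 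There is no conceptual difficulty beyond bookkeeping; the only genuinely mathematical point, namely that clearing the denominators $1+3xy$ is harmless, is already in hand from the earlier lemmas.
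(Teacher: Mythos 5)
Your proposal is correct and follows essentially the same route as the paper: the authors likewise apply the identity $\cos^2\!A+\cos^2\!B+\cos^2\!C=1+2\cos A\cos B\cos C$ (for $A+B+C=0$) to the three phase differences, substitute the rational function from \eqref{stxy}, clear the denominators $(1+3xy)(1+3yz)(1+3zx)$, and reduce the resulting symmetric polynomial to $p,q,r$ via \texttt{SymmetricReduction}. The only differences are cosmetic (your Gram-determinant phrasing of the identity, and the added numerical certification).
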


\begin{proof}
Although \eqref{F0} is rather complicated, the existence of such an
expression is a consequence of the elementary trigonometric identity
\begin{equation}\label{trig}
  A+B+C=0\ \Rightarrow\ \cos^2\!A+\cos^2\!B+\cos^2\!C=1+2\cos A\cos
  B\cos C,
\end{equation}
which is tailor made for \eqref{stxy}. The identity itself can be
proved by writing applying more standard ones to the sum $A+(B+C).$
Denote the right-hand side of \eqref{stxy} by the symmetric function
$c(x,y).$ Then
\begin{equation}\label{ccc}
c(x,y)^2+c(y,z)^2+c(z,x)^2 = 1 + 2c(x,y)c(y,z)c(z,x).
\end{equation}
This simplifes into the vanishing of the quotient
\begin{equation}\label{243}
\frac{243\,f(x,y,z)}{2048(1+3xy)^2(1+3yz)^2(1+3zx)^2},
\end{equation}
in which $f$ is a totally symmetric polynomial. We can then use the
\emph{Mathematica} command $\mathtt{SymmetricReduction}$ to express
\begin{equation}
f(x,y,z)=F(p,q,r)
\end{equation}
as a function of the elementary symmetric polynomials, and the result
follows.
\end{proof}

\sect{Graphical interpretation}\label{Gi}

Suppose once again that $\SS$ is a SIC set in $\CP^2$ containing
$[\z_1]=[0,1,-\om]$ and $[\z_2]=[0,1,-\om^2],$ and (in view of
Lemma~\ref{pi2}) \emph{not} the third point $[0,1,-1]$ of $C_1.$ The
planar parametrization \eqref{CS} of the remaining points of $\SS$
enables us to describe graphically the quest for such SIC sets. Before
we do this, we prove two results that help with their classification.

Setting $\phi=-\pi/6$ in \eqref{zst} defines the circle $C_2,$ and
$\phi=\pi/6$ the circle $C_3.$ It will be convenient to consider three
more circles $C_-,C_0,C_+$ given by $\si=-2\pi/3$, $0$, $2\pi/3$
respectively. Unlike $C_1,C_2,C_3,$ these three are not disjoint: they
meet in $[0,1,-1].$ The circles $C_2,C_3$ are represented by
horizontal lines in $\sR,$ and $C_-,C_0,C_+$ by equally-spaced
vertical lines; all five have diameter $\pi.$ The lines representing
$C_-,C_0$ and $C_2$ are visible in Figure~7.

\begin{lem}\label{pi6}
  If $\SS$ contains a point $\z[\si,\phi]$ with $|\phi|=\pi/6$ then
  $\SS$ is isometric to a midpoint solution.
\end{lem}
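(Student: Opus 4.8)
The plan is to pin $\SS$ down to one of the sets $\SSt$ of Proposition~\ref{7}. Write the given point with $|\phi|=\sa\pi$ as $P=\z[\si_0,\phi_0]$. By Lemma~\ref{pi2} we may assume $[0,1,-1]\notin\SS$ (otherwise $\SS$ is already a midpoint solution), so the seven points of $\SS$ other than $[\z_1],[\z_2]$ all have the form $\z[\si,\phi]$ with $\phi\ne\pm\ha\pi$ (Lemma~\ref{Zst}). First I would reduce to the case $\phi_0=\sa\pi$, i.e. $P\in C_3$: if $\phi_0=-\sa\pi$ then $P\in C_2$, and the coordinate transposition $[z_1,z_2,z_3]\mapsto[z_1,z_3,z_2]$ --- an isometry fixing $[0,1,-1]$, interchanging $[\z_1]$ and $[\z_2]$, and swapping $C_2$ with $C_3$ --- carries $\SS$ to a congruent SIC set with the required property. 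The key computation is that setting $x=\tan\sa\pi=\rec$ in the right-hand side of \eqref{stxy} turns its numerator into $-8(1+\sqrt 3\,y)$ and its denominator into $16(1+\sqrt 3\,y)$; hence $\cos(\si-\tau)=-\ha$ \emph{identically} in $y=\tan\psi$, except when $1+3xy=0$, that is $\psi=-\sa\pi$, when the partner point lies in $C_2$. Since a point of $C_2$ and a point of $C_3$ are automatically correctly separated (their inner product only sees the first coordinate, of modulus $1/\sqrt 2$ for each), and since \eqref{stxy} itself is valid for any pair not of this $C_2$--$C_3$ type (the derivation in Lemma~\ref{corr} fails only when $\phi=-\psi=\pm\sa\pi$), this shows: every point of $\SS\setminus\{[\z_1],[\z_2]\}$ not lying in $C_2$ has phase $\si\equiv\si_0\pm\frac23\pi\pmod{2\pi}$. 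Thus the seven remaining points of $\SS$ lie on $C_2$ together with the two circles $L_\pm=\{\z[\si_0\pm\frac23\pi,\phi]\}$, whereas $P\in C_3$ lies on neither of $L_\pm$.

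Each of $C_2,C_3,L_+,L_-$ lies on a projective line ($L_\pm$ being $\U(1)$-translates of $C_\pm$, which lie on lines), so by Lemma~\ref{CSI} (cf.\ Remark~\ref{HP3}) each meets $\SS$ in at most three points; in particular $C_3\cap\SS$ has at most two points besides $P$, and since these two also lie on $L_+\cup L_-$, counting the six points of $\SS\setminus\{[\z_1],[\z_2],P\}$ forces at least one of them to lie outside $C_2\cup C_3$. Writing $c(u,v)$ for the value of the right-hand side of \eqref{stxy}, one has the identities $c(u,v)=1\iff u^2+v^2-3u^2v^2-8uv-3=0$ and $c(u,v)=-\ha\iff(3u^2-1)(3v^2-1)=0$, and in particular $c(\rec,v)=-\ha$ for all $v\ne-\rec$. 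From these I would extract: (a) a circle $L_\pm$ cannot contain both a point of $C_3$ and a point outside $C_2\cup C_3$, since two points of equal phase are correctly separated only if $c=1$, while $c(\rec,v)=-\ha$; (b) if some $L_\pm$ contains three points outside $C_2\cup C_3$, then correct separation with any point of the other circle forces $(3u^2-1)(3v^2-1)=0$ with $u\ne\pm\rec$, hence $v=\pm\rec$, so the other circle carries no point outside $C_2\cup C_3$ and, by (a), at most one point altogether; (c) via \eqref{stxy}, the phase of a point of $C_2\cap\SS$ is pinned, against any point outside $C_2\cup C_3$, into a two-element set of residues modulo $2\pi$, so $C_2$ cannot contain three points.

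Running through the finitely many distributions of the six points among $C_2,L_+,L_-$ (symmetric in $L_+\leftrightarrow L_-$), facts (a)--(c) leave only one possibility: two points in $C_2$, three points outside $C_2\cup C_3$ on one $L_\pm$, and one further point, necessarily in $C_3$, on the other. For this configuration the three points on (say) $L_+$ have tangents $u_1,u_2,u_3$ with $c(u_i,u_j)=1$; since the two roots in $v$ of $u^2+v^2-3u^2v^2-8uv-3=0$ are $\tan(\phi\pm\frac13\pi)$ when $u=\tan\phi$, this triple must equal $\{\tan(\th-\frac13\pi),\tan\th,\tan(\th+\frac13\pi)\}$ for some $\th\in[-\ha\pi,\ha\pi)$. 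By (c) the two points of $C_2\cap\SS$ then have phases $\si_0$ and $\si_0-\frac23\pi$, and the second point of $C_3\cap\SS$ has phase $\si_0-\frac23\pi$; together with $[\z_1],[\z_2]$ this determines all nine points, and after normalising $\si_0=-\frac23\pi$ by the $\U(1)$ of \eqref{U1} one reads off exactly the set $\SSt$ of Proposition~\ref{7}. As $\SSt$ is isometric to a midpoint solution, so is $\SS$.

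\textbf{Expected main obstacle.} The heart of the argument is the elimination in the second and third paragraphs: one has to track carefully which pairs of points are governed by \eqref{stxy} (all but the information-free $C_2$--$C_3$ pairs) and handle the possibility of empty buckets, and then check that the single surviving distribution reproduces the one-parameter family $\SSt$ and nothing larger --- which comes down to the small trigonometric fact that the conic $u^2+v^2-3u^2v^2-8uv-3=0$ is exactly the locus $\{(\tan\phi,\tan(\phi\pm\frac13\pi))\}$.
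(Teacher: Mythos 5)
Your argument is correct and follows essentially the same route as the paper's proof: the computation $c(\pm\rec,y)=-\ha$ confines the remaining points to $C_2$ and the two phase-circles at $\si_0\pm\frac23\pi$, the bound of three points of $\SS$ per projective line together with the identities $c(u,v)=-\ha\Leftrightarrow(3u^2-1)(3v^2-1)=0$ and $c(u,v)=1\Leftrightarrow u^2+v^2-3u^2v^2-8uv-3=0$ eliminates all but one distribution, and the roots $\tan(\phi\pm\frac13\pi)$ identify the survivor with $\SSt.$ One clause should be repaired: in (b), ``at most one point altogether'' on the other circle is false and does not follow from (a) --- in the configuration you yourself arrive at, that circle contains \emph{two} points of $\SS,$ namely its intersection with $C_3$ and the $C_2$ point of phase $\si_0-\frac23\pi$; what your final count actually needs, and what the preceding sentence already gives, is that the other circle contains at most one point outside $C_2.$
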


\begin{proof}
We can use the isometry \eqref{U1} to shift all points of $\SS$ by a
translation parallel to the horizontal axis within our rectangle
$\sR.$ We may therefore assume that $\si=0.$ Suppose for definiteness
that $\phi=\pi/6,$ so that $x=\rec$ and $\z[\si,\phi]\in C_3.$ Suppose
that $\z[\tau,\psi]$ is a fourth point of $\SS,$ and apply
\eqref{stxy}. The numerator equals
\begin{equation} 
-11+9x^2+9y^2-27x^2y^2-24xy=-8(1+\sqrt3y),
\end{equation} 
and the right-hand side of \eqref{stxy} becomes $-1/2$ unless
$y=-\rec.$ It follows that $\tau=\pm2\pi/3$ or $\psi=-\pi/6.$ Indeed,
the set of points correctly separated from $[\z_1],$ $[\z_2]$ and
$\z[0,\pi/6]$ is the union $C_-\cup C_2\cup C_+.$ This union must now
contain six points of $\SS,$ and no circle can contain more than
three.

Now suppose that $\SS$ contains distinct points $\z[\frac23\pi,\psi]
\in C_+$ and $\z[\upsilon,-\sa\pi]\in C_2$. Then \eqref{stxy} tells us
that either $y=\rec$ (and so $\psi=\pi/6$), or else
\begin{equation}\ts
\cos(\frac23\pi-\upsilon)=-\frac12
\end{equation}
and $\upsilon=-2\pi/3$ or $\upsilon=0$. So either the first point lies
on $C_3$, or else the second point lies on $C_-\sqcup C_0$. Now
suppose that $\SS$ contains $\z[-\frac23\pi,\psi]\in C_-$ and
$\z[\frac23\pi,\chi]\in C_+$ This time, \eqref{stxy} yields
\begin{equation}
(3y^2-1)(3z^2-1)=0,
\end{equation}
and at least one of the two points is one of the last four in
\eqref{6pts}. We may also suppose that $[0,1,-1]\not\in\SS$ by
Lemma~\ref{pi2}. It then follows that $\SS$ consists of $[\z_1]$ and
$[\z_2]$, the two points in $C_-\cap(C_2\sqcup C_3),$ the two points
in $C_0\cap(C_2\sqcup C_3)$ and three points in $C_+,$ \emph{or} the
same thing with $C_-$ and $C_+$ interchanged. Applying \eqref{U1} with
$x=\pm2\pi/3,$ we obtain exactly the SIC set $\SSt$ for some $\th$
(like the one that includes the green points in Figures~6 and 7). Then
the result follows from Proposition~\ref{7}.
\end{proof}

\begin{lem}\label{LR} 
  Suppose that $\SS$ is a SIC set that contains $[\z_1],[\z_2]$ and
  $\z[0,\th].$ Recall that any SIC set has this property up to
  isometry. If $\SS$ contains distinct points
  $\z[\si_1,\phi],\z[\si_2,\phi]$ with $\si_1,\si_2\in(-\pi,\pi)$ then
  it is isometric to a midpoint solution.
\end{lem}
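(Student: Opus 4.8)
The plan: if $[0,1,-1]\in\SS$ then $\SS$ is isometric to a midpoint solution by Lemma~\ref{pi2}, and if $\SS$ meets $C_2\cup C_3$ the same follows from Lemma~\ref{pi6}; so I shall show that the remaining case — $[0,1,-1]\notin\SS$ and $\SS$ disjoint from $C_2\cup C_3$ — is incompatible with the existence of the pair $\z[\si_1,\phi],\z[\si_2,\phi]$, and hence cannot occur. Assume that remaining case. By Lemma~\ref{Zst}, every point of $\SS$ other than $[\z_1],[\z_2]$ is of the form $\z[\upsilon,\chi]$ with $\chi\in(-\ha\pi,\ha\pi)\setminus\{-\sa\pi,\sa\pi\}$; in particular $\phi$ is of this kind, so $\cos\phi\ne0$ and $\z[\si_1,\phi]\ne\z[\si_2,\phi]$ forces $\si_1\ne\si_2$ (the remaining coordinates of these two points agree). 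Moreover any denominator $1+3\tan\chi\tan\chi'$ occurring below is nonzero, its vanishing forcing $|\chi|=|\chi'|=\sa\pi$, as in the discussion after \eqref{stxy}. Hence Lemma~\ref{corr}, in the rational form \eqref{stxy}, applies to any two points of $\SS$ written in the $\z$-parametrization.

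The key step. Write $c(x,y)$ for the right-hand side of \eqref{stxy}, and let $P=\z[\upsilon,\chi]$ be any of the five points of $\SS\setminus\{[\z_1],[\z_2],\z[\si_1,\phi],\z[\si_2,\phi]\}$ (the four listed points are distinct, since $\z[\si_i,\phi]$ has nonzero first coordinate). Correct separation of $P$ from each of the two pair points yields, by \eqref{stxy},
\[\cos(\upsilon-\si_1)=c(\tan\chi,\tan\phi)=\cos(\upsilon-\si_2),\]
the middle term being common because $\z[\si_1,\phi]$ and $\z[\si_2,\phi]$ share the latitude $\phi$. Since $\si_1\ne\si_2$, this forces $2\upsilon\equiv\si_1+\si_2\pmod{2\pi}$, so $\upsilon$ takes one of just two values, differing by $\pi$. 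Replacing $\SS$ by its image under the isometry \eqref{U1} with parameter $(\si_1+\si_2)/6$ — which fixes $[\z_1]$ and $[\z_2]$ and carries each phase $\si$ to $\si-(\si_1+\si_2)/2$ — we may assume $\upsilon\in\{0,\pi\}$ for each of these five points. In the form \eqref{zst}, such a point is represented by a real unit vector in $\R^3\subset\C^3$; so $\SS$ now contains five points of this real three-space.

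It remains to reach a contradiction. By Lemma~\ref{CSI}, two correctly separated real unit vectors $\uu,\uu'$ satisfy $\big<\uu,\uu'\big>=\pm\ha$, so the five real points of $\SS$ span five \emph{distinct} lines through the origin of $\R^3$, pairwise inclined at $60^\circ$. But $\R^3$ admits at most three such lines: given unit vectors $\uu_1,\dots,\uu_4$ with $\big<\uu_i,\uu_j\big>=\pm\ha$ for $i\ne j$, normalize signs so that $\big<\uu_1,\uu_i\big>=\ha$ for $i=2,3,4$; a short calculation then gives $\det\bigl(\big<\uu_i,\uu_j\big>\bigr)=\frac{1}{64}\bigl(18+2(-3)^k-24k\bigr)$, where $k\in\{0,1,2,3\}$ is the number of pairs in $\{2,3,4\}$ at inner product $-\ha$, and this is never zero — contradicting the singularity of the Gram matrix of four vectors in $\R^3$. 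Five such lines are therefore impossible, so the case assumed at the outset cannot occur; thus $[0,1,-1]\in\SS$ or $\SS$ meets $C_2\cup C_3$, and in either case $\SS$ is isometric to a midpoint solution.

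I expect the steps needing the most care to be the reduction in the first paragraph — making sure that, once the cases covered by Lemmas~\ref{pi2} and~\ref{pi6} are removed, the identity \eqref{stxy} is genuinely available for every relevant pair, with no vanishing denominator and no accidental coincidence among the nine points — together with the correct tracking of the action of \eqref{U1} on the phase $\si$ and of which representative of $\z[\si,\phi]$ is the real one (the form \eqref{zst} rather than \eqref{ZST}). The closing Gram-determinant calculation is then routine arithmetic.
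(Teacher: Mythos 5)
Your proof is correct, and while it opens with the same decisive observation as the paper, it finishes by a genuinely different route. The shared step is that a point $\z[\upsilon,\chi]$ correctly separated from both $\z[\si_1,\phi]$ and $\z[\si_2,\phi]$ must satisfy $\cos(\upsilon-\si_1)=\cos(\upsilon-\si_2)$, hence $2\upsilon\equiv\si_1+\si_2 \pmod{2\pi}$; the paper uses the hypothesis $\z[0,\th]\in\SS$ to reduce to $\si_1=-\si_2$ and concludes every further point has phase $0$ or $\pi$. From there the paper stays inside the rectangle $\sR$: it pins the points on the axis $\tau=0$ down to $\z[0,\th]$, $\z[0,\th\pm\frac13\pi]$ and finishes by counting (with a separate remark needed for the circle $\tau=\pi$). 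You instead absorb the common phase by the isometry \eqref{U1} (correctly tracked: parameter $x$ shifts $\si$ to $\si-3x$), observe that the five remaining points are then represented by \emph{real} unit vectors, and kill them with the fact that $\R^3$ contains at most three lines pairwise inclined at $60^\circ$, via the Gram determinant $\fr1{64}\big(18+2(-3)^k-24k\big)$, whose four values $\fr5{16},-\fr3{16},-\fr3{16},-\fr{27}{16}$ are indeed all nonzero. This endgame buys several things: it treats the phases $0$ and $\pi$ symmetrically rather than having to dispose of $\tau=\pi$ separately, it does not use the point $\z[0,\th]$ at all, and the equiangular-lines step is self-contained linear algebra; the preliminary reductions (no pinch point, no intersection with $C_2\cup C_3$, nonvanishing of $1+3xy$) are all handled correctly, so the identity \eqref{stxy} is legitimately available for every pair you invoke it on.
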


\begin{proof} 
First observe that $\si_1+\si_2=0;$ this follows by applying
Lemma~\ref{corr} in which we can set $(\tau,\psi)=(0,\th)$ to obtain
$\cos\si_1=\cos\si_2.$ So take $\si=\si_1.$

In view of Lemma~\ref{pi6}, we may suppose that $x=\tan\phi$ is
different from $\pm\rec.$ We can choose a sixth point $\z[\tau,\psi]$
of $\SS$ such that $\tau\ne\pi,$ since the circle $\tau=\pi$ can
contain at most three points a distance $2\pi/3$ apart. It follows
from \eqref{stxy} that either $1+3xy=0$ (and we can apply
Lemma~\ref{pi6}) or
\begin{equation}
\cos(\si-\tau)=\cos(-\si-\tau).
\end{equation}
Since $\si=0$ and $\si=\pi$ do not yield distinct points, the only
possibility remaining from our assumption is that $\tau=0.$ If
$t=\tan\th$ and $y=\tan\psi$, \eqref{stxy} implies that
\begin{equation}
t^2+y^2-3t^2y^2-8ty-3=0.
\end{equation}
This gives 
\begin{equation}
   y=\frac{t\pm\sqrt3}{1\mp\sqrt3\,t},
\end{equation}
$\psi=\th\pm\pi/3$ modulo $\pi.$ This is the configuration of three
points visible on the central vertical axis in Figure~7. All together,
$\SS$ now contains at most seven points including $[\z_1]$ and
$[\z_2],$ which is a contradiction. Using \eqref{stxy}, one can in
fact show that given the sixth point, either $\phi$ or $\psi$ must
equal $\pm\pi/6.$
\end{proof}

\def\vsp{\vskip10pt}
\begin{figure}[b]
\scalebox{1.1}{\includegraphics{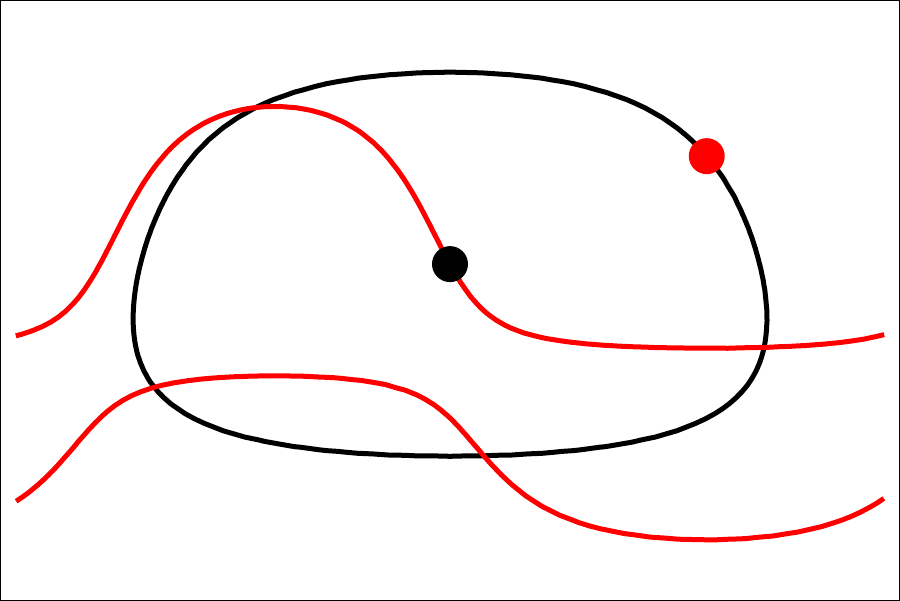}}
\caption{The black (resp., red) point is correctly separated from all
  points on the black (resp., red) curve. The two points cannot belong
  to a SIC set because there are only four remaining points correctly
  separated from them both.}  \vsp
\end{figure}

\begin{figure}[b]
\scalebox{1.1}{\includegraphics{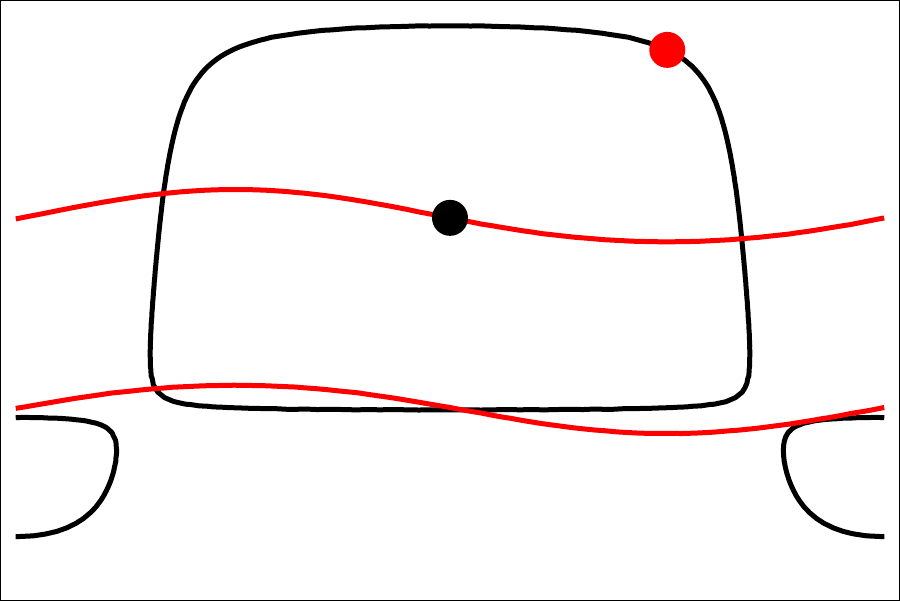}}
\caption{The points that are correctly separated from the black point
  can form a disconnected set. Here, there appear to be five points
  correctly separated from the red and black points, but these five
  points do not in fact form part of a SIC set.}  \vsp
\end{figure}

We are now in a position to illustrate the problem of finding SIC sets
that contain $[\z_1]$ and $[\z_2].$ We can (and shall) assume that a
third point of $\SS$ is $\z[0,\th]$ for some fixed
$\th\in(-\pi/2,\pi/2).$ This point corresponds to one on the central
vertical axis of the rectangle $\sR,$ and will be displayed by a black
dot in the figures. We shall draw some curves to illustrate the
concept of correctly separated points in $\sR,$ meaning that the
distance between the points they represent in $\CP^2$ equals $2\pi/3.$
A fourth point $\z[\si,\phi]$ of $\SS$ will be displayed by a red dot.

\begin{figure}[t]
\scalebox{1.1}{\includegraphics{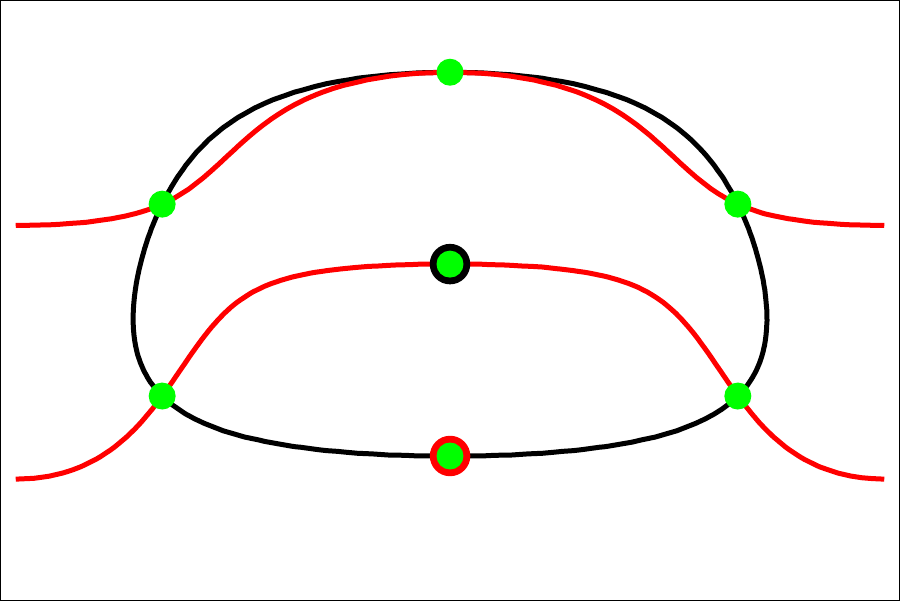}}
\caption{Here the fourth point $\z[0,\frac1{16}\pi-\frac13\pi]$ belongs
  to $\SSt$ which is generated by the remaining five points on the
  intersection of the black and red curves.} \vsp
\end{figure}

\begin{figure}[t]
\scalebox{1.1}{\includegraphics{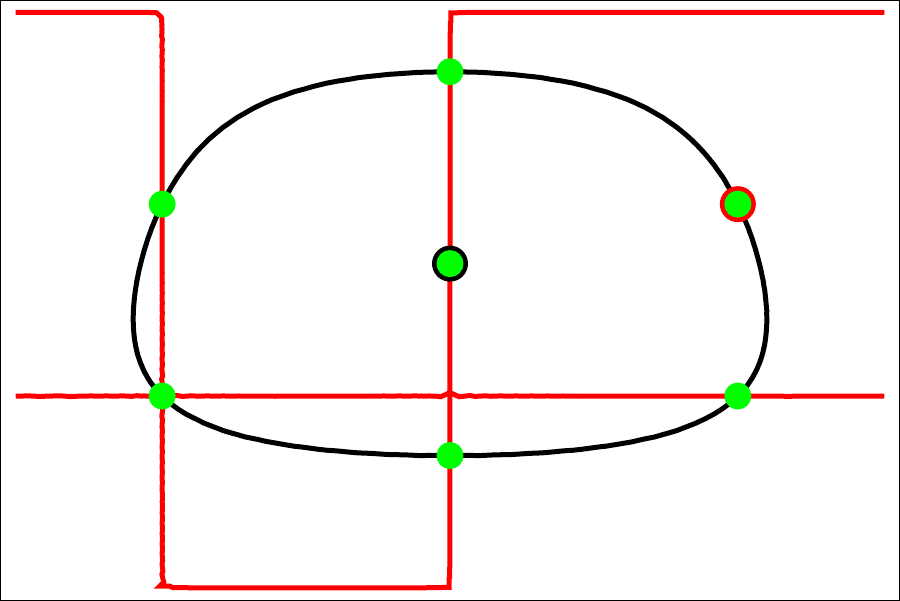}}
\caption{Here the fourth point $\z[\frac23\pi,\sa\pi]\in\SSt$ is
  equidistant from all points on the circles $C_-,C_0,C_2$,
  represented by straight lines in $\sR.$ The segments top and bottom
  collapse to the pinch point.} \vsp
\end{figure}

In Figure~4, $\th=\pi/16$ so that the third point $\z[0,\th]$ of $\SS$
is close to centre of $\sR.$ The black curve is the set of points
$\z[\si,\phi]$ which are a distance $2\pi/3$ from $\z[0,\th].$ The
remaining six points of $\SS$ must therefore lie on this curve. One
such example is represented by the red dot, which actually has
$\phi=\pi/4.$ Points $\z[\tau,\psi]$ a distance $2\pi/3$ apart from
this red point are those on the red curve (which has two
components). The intersection of the black and red curves consists of
points which are correctly separated from both the third and fourth
points. Since there are only four of these (we require five), the
value $x=\tan\phi=1$ cannot in fact occur when $\th=\pi/16.$

The nature of the black curve is heavily dependent on the value
  chosen of $\th$ and $\tt=\tan\th.$ If $\z[\pi,\phi]$ is correctly
  separated from $\z[0,\th]$ and $x=\tan\phi$ then
\begin{equation}\label{pi} 
9(1-3\tt^2)x^2+24\tt x+9\tt^2+5=0.
\end{equation}
Computing the roots of the discriminant as a function of $\tt ,$
\eqref{pi} has distinct roots if and only if
$|\tt|>\sqrt{5/27}=0.430\ldots$ In this case, the black curve has two
connected components, and an example is visible in Figure~5 for which
$\th=\pi/7.$ This time the red point $(\si,\phi)$ is chosen (with $x$
approximately $4.75$) so that there are exactly five points correctly
separated from both the (black and red) third and fourth
points. Subsequent analysis will show that these five points are not
correctly separated from each other.

Although the fourth (red) point in Figure~4 is not admissible (nor, in
fact, is that in Figure~5), Proposition~\ref{7} implies that there
\emph{does} exists a SIC set, namely $\SSt,$ containing the first
three points, so there must be at least six points on the black curve
that \emph{are} admissible. For Figures~6 and 7, we return to the
value $\th=\pi/16,$ and display these six points in green.

In Figure~6, we have chosen the fourth point $\z[\si,\phi]$ to be the
admissible one with $\si=0$ and $\phi$ negative. In Figure~7, we have
chosen the fourth point to be one of the points of $\SSt$ that does
not depend on $\th.$ Recall that the top and bottom boundary of $\sR$
is a single point, and that the horizontal lines $\phi=\pm\pi/6$ are
the circles $C_2,C_3.$ Figure~7 illustrates the fact that any point of
$C_2$ is correctly separated from $[\z_1],$ $[\z_2]$ and a given point
of $C_3,$ as we explained in the proof of Lemma~\ref{pi6}.

\sect{Symmetrization}\label{Sym}

We suppose now that $\SS$ is a SIC set containing, in addition to
$[\z_1]=[0,1,-\om]$ and $[\z_2]=[0,1,-\om^2],$ four more points
$\z[\si_i,\phi_i],$ $i=3,4,5,6,$ with $\tt=\phi_3,$ $x=\phi_4,$
$y=\phi_5,$ $z=\phi_6.$

In view of Theorem~\ref{fa} and equation \eqref{243}, our task is to
investigate the system of polynomial equations given by
\begin{equation}\label{task}
f(x,y,z)=0,\ \ f(\tt,y,z)=0,\ \ f(\tt,x,z)=0,\ \ f(\tt,x,y)=0.
\end{equation}
Since $f$ is itself symmetric, the whole system is invariant under the
action of the group of permutations of $\tt,x,y,z.$ There are
refinements of Buchberger's algorithm for dealing with symmetric
ideals, but we shall adopt the technique outlined in
\cite{St}. Namely, we shall convert the system into a system four
equations, each of which involves only the elementary symmetric
polynomials defined by
\begin{equation}\label{esp}\ba{l} 
a=\tt+x+y+z,\\
b=\tt x+\tt y+\tt z+yz+zx+xy,\\
c=xyz+\tt yz+\tt xz+\tt xy,\\
d=\tt xyz.
\ea\end{equation}
To accomplish this, first define 
\begin{equation}
F_1=f(x,y,z)+f(\tt,y,z)+f(\tt,x,z)+f(\tt,x,y),
\end{equation}
and consider
\begin{equation} 
g(\tt,x,y,z)=\frac{f(\tt,x,y)-f(\tt,x,z)}{y-z}.
\end{equation}
This is a \emph{polynomial} in $\tt,x,y,z,$ so we can symmetrize it to get
\begin{equation}\ba{r} 
F_2=g(\tt,x,y,z)+g(\tt,y,z,x)+g(\tt,z,x,y)\\
   +g(x,y,\tt,z)+g(x,z,\tt,y)+g(y,z,x,\tt).
\ea\end{equation}
Next, set
\begin{equation} 
h(\tt,x,y,z)=\frac{g(\tt,x, y, z)-g(\tt, y, x, z)}{x - y},
\end{equation}
so as to define
\begin{equation}\ba{r} 
F_3= h(\tt,x,y,z) + h(\tt,y,z,x) + h(\tt,z,x,y)\\
+h(x,y,\tt,z) + h(x,z,\tt,y) + h(y,x,\tt,z)\\
+h(z,x,y,\tt) + h(x,y,z,\tt) + h(y,z,x,\tt)\\
 + h(z,y,\tt,x) + h(y,z,\tt,x) + h(z,x,t,y),\\[10pt]
\ds F_4= \frac{h(\tt,x,y,z)-h(x,\tt,y,z)}{\tt- x}.
\phantom{oooooo}
\ea\end{equation}
\medbreak

Each of $F_1,F_2,F_3,F_4$ is a symmetric polynomial in $\tt,x,y,z,$
and can therefore be expressed as a polynomial in $a,b,c,d.$ The proof
of Theorem~\ref{strong} proceeds by examination of the system
\begin{equation}\label{4F}
F_1(a,b,c,d)=0,\ \ F_2(a,b,c,d)=0,\ \ 
F_3(a,b,c,d)=0,\ \ F_4(a,b,c,d)=0.
\end{equation}
To determine the polynomials $F_i$ in practice, we used again the
\emph{Mathematica} command $\mathtt{SymmetricReduction}$. For
completeness we list them explicitly:

{\normalsize\begin{equation*}\ba{l} F_1=36 -
  66 a^2 + 27 a^4 + 218 b - 288 a^2 b + 54 a^4 b + 614 b^2 - 452 a^2
  b^2 + 48 a^4 b^2 + 828 b^3 - 276 a^2 b^3\j + 378 b^4 + 54 b^5 - 41 a
  c + 159 a^3 c - 63 a^5 c - 567 a b c + 270 a^3 b c - 246 a b^2 c +
  18 a^3 b^2 c - 279 a b^3 c\j + 81 a b^4 c + 834 c^2 - 708 a^2 c^2 -
  153 a^4 c^2 + 1968 b c^2 - 171 a^2 b c^2 + 2871 b^2 c^2 - 486 a^2
  b^2 c^2 + 810 b^3 c^2\j + 243 b^4 c^2 - 693 a c^3 - 486 a^3 c^3 +
  2376 a b c^3 - 810 a b^2 c^3 + 567 c^4 - 162 a^2 c^4 + 6399 b c^4 +
  486 b^2 c^4\j + 1701 a c^5 + 2187 c^6 - 712 d + 687 a^2 d + 414 a^4
  d - 2632 b d + 351 a^2 b d + 216 a^4 b d - 4470 b^2 d + 1107 a^2 b^2
  d\j - 5454 b^3 d + 243 a^2 b^3 d - 1782 b^4 d - 486 b^5 d + 453 a c
  d + 927 a^3 c d - 3330 a b c d + 2835 a^3 b c d - 7857 a b^2 c d\j +
  1458 a b^3 c d + 666 c^2 d - 1485 a^2 c^2 d - 4968 b c^2 d + 2268
  a^2 b c^2 d - 24786 b^2 c^2 d - 1944 b^3 c^2 d - 6075 a c^3 d\j -
  9477 a b c^3 d - 1701 c^4 d - 13122 b c^4 d + 4656 d^2 - 531 a^2 d^2
  - 2673 a^4 d^2 + 14436 b d^2 + 9774 a^2 b d^2 + 12042 b^2 d^2\j -
  2349 a^2 b^2 d^2 + 13608 b^3 d^2 + 972 b^4 d^2 + 3861 a c d^2 - 1944
  a^3 c d^2 + 37665 a b c d^2 + 13365 a b^2 c d^2 + 6966 c^2 d^2\j +
  8991 a^2 c^2 d^2 + 7776 b c^2 d^2 + 19683 b^2 c^2 d^2 + 13122 a c^3
  d^2 - 11448 d^3 - 11907 a^2 d^3 - 35640 b d^3 - 12393 a^2 b d^3\j -
  7290 b^2 d^3 - 4374 b^3 d^3 - 16281 a c d^3 - 26244 a b c d^3 -
  13122 c^2 d^3 + 8748 d^4 + 6561 a^2 d^4 + 13122 b d^4.
  \ea\end{equation*}}\\[-15pt] 
{\normalsize\begin{equation*}\ba{l} F_2= 63 a -
  243 a^3 + 81 a^5 + 829 a b - 846 a^3 b + 81 a^5 b + 1706 a b^2 - 642
  a^3 b^2 + 1092 a b^3 + 18 a^3 b^3 + 45 a b^4\j + 81 a b^5 - 1086 c +
  741 a^2 c - 18 a^4 c - 2348 b c + 123 a^2 b c - 153 a^4 b c + 24 b^2
  c - 630 a^2 b^2 c + 2493 b^3 c\j - 486 a^2 b^3 c + 810 b^4 c + 243
  b^5 c + 135 a c^2 + 81 a^3 c^2 - 18 a b c^2 - 486 a^3 b c^2 + 2376 a
  b^2 c^2 - 810 a b^3 c^2\j - 162 c^3 + 567 b c^3 - 162 a^2 b c^3 +
  6399 b^2 c^3 + 486 b^3 c^3 + 1701 a b c^4 + 2187 b c^5 - 1512 a d +
  2583 a^3 d\j + 405 a^5 d - 8709 a b d + 2232 a^3 b d - 11583 a b^2 d
  + 1944 a^3 b^2 d - 7479 a b^3 d - 891 a b^4 d + 1926 c d + 1080 a^2
  c d\j + 1701 a^4 c d + 270 b c d - 5859 a^2 b c d - 2862 b^2 c d +
  4374 a^2 b^2 c d - 11988 b^3 c d - 972 b^4 c d - 2916 a c^2 d\j +
  486 a^3 c^2 d - 25272 a b c^2 d - 7533 a b^2 c^2 d - 5103 a^2 c^3 d
  - 1701 b c^3 d - 8748 b^2 c^3 d - 6561 a c^4 d + 10422 a d^2\j -
  3807 a^3 d^2 + 37071 a b d^2 - 3888 a^3 b d^2 + 29160 a b^2 d^2 +
  4617 a b^3 d^2 + 486 c d^2 + 14337 a^2 c d^2 + 41472 b c d^2\j +
  17010 a^2 b c d^2 + 7290 b^2 c d^2 + 6561 b^3 c d^2 + 15309 a c^2
  d^2 + 26244 a b c^2 d^2 + 13122 c^3 d^2 - 46656 a d^3\j - 6561 a^3
  d^3 - 28431 a b d^3 - 10935 a b^2 d^3 - 10206 c d^3 - 13122 a^2 c
  d^3 - 30618 b c d^3 + 19683 a d^4.\ea\end{equation*}}\\[-15pt]
{\normalsize\begin{equation*}\ba{l} F_3=780 - 978 a^2 - 180 a^4 + 27
  a^6 + 3468 b - 1320 a^2 b - 396 a^4 b + 4968 b^2 - 81 a^2 b^2 + 54
  a^4 b^2 + 2268 b^3\j - 108 a^2 b^3 + 324 b^4 + 243 a^2 b^4 - 594 a c
  - 693 a^3 c - 459 a^5 c + 2250 a b c - 1161 a^3 b c + 6993 a b^2 c -
  1458 a^3 b^2 c\j + 2430 a b^3 c + 729 a b^4 c + 900 c^2 - 1188 a^2
  c^2 - 1458 a^4 c^2 + 648 b c^2 + 7128 a^2 b c^2 - 2430 a^2 b^2 c^2 +
  1701 a c^3\j - 486 a^3 c^3 + 19197 a b c^3 + 1458 a b^2 c^3 + 5103
  a^2 c^4 + 6561 a c^5 - 5304 d + 4446 a^2 d + 4509 a^4 d - 22644 b
  d\j - 6318 a^2 b d + 3645 a^4 b d - 32076 b^2 d - 10044 a^2 b^2 d -
  10692 b^3 d - 486 a^2 b^3 d - 2916 b^4 d + 6642 a c d\j + 4779 a^3 c
  d - 34668 a b c d + 5832 a^3 b c d - 26244 a b^2 c d - 2916 a b^3 c
  d - 7776 c^2 d - 16281 a^2 c^2 d - 76788 b c^2 d\j - 18225 a^2 b c^2
  d - 5832 b^2 c^2 d - 25515 a c^3 d - 26244 a b c^3 d - 26244 c^4 d +
  19440 d^2 + 18954 a^2 d^2 - 2187 a^4 d^2\j + 50868 b d^2 + 19440 a^2
  b d^2 + 75816 b^2 d^2 + 9477 a^2 b^2 d^2 + 5832 b^3 d^2 + 85050 a c
  d^2 + 11664 a^3 c d^2\j + 65610 a b c d^2 + 19683 a b^2 c d^2 +
  20412 c^2 d^2 + 19683 a^2 c^2 d^2 + 78732 b c^2 d^2 - 68040 d^3 -
  39366 a^2 d^3\j - 32076 b d^3 - 13122 a^2 b d^3 - 26244 b^2 d^3 -
  65610 a c d^3 + 26244 d^4.  \ea\end{equation*}}\\[-15pt]
 {\normalsize\begin{equation*}\ba{l}
F_4= 70 a - 168 a^3 + 9 a^5 + 464 a b - 228 a^3 b + 525 a b^2 + 18 a^3 b^2 - 36 a b^3 + 81 a b^4 - 26 c - 399 a^2 c - 153 a^4 c\j
+ 1158 b c - 387 a^2 b c + 2655 b^2 c - 486 a^2 b^2 c + 810 b^3 c + 243 b^4 c - 504 a c^2 - 486 a^3 c^2 + 2376 a b c^2 - 810 a b^2 c^2\j
+ 567 c^3 - 162 a^2 c^3 + 6399 b c^3 + 486 b^2 c^3 + 1701 a c^4 + 2187 c^5 - 762 a d + 963 a^3 d - 3942 a b d + 1215 a^3 b d\j
- 4320 a b^2 d - 162 a b^3 d + 486 c d - 675 a^2 c d - 3348 b c d + 1944 a^2 b c d - 11988 b^2 c d - 972 b^3 c d - 6075 a c^2 d\j
- 6075 a b c^2 d - 1701 c^3 d - 8748 b c^3 d + 4266 a d^2 - 729 a^3 d^2 + 10692 a b d^2 + 3159 a b^2 d^2 + 5994 c d^2\j
+ 3888 a^2 c d^2 + 4374 b c d^2 + 6561 b^2 c d^2 + 6561 a c^2 d^2 - 4374 a d^3 - 4374 a b d^3 - 4374 c d^3.
\ea\end{equation*}}

When $d=0$ (so at least one of $\tt,x,y,z$ vanishes), the expressions
for the $F_i$ simplify greatly, and explicit solutions to \eqref{4F}
can be computed. Not all of the solutions are valid because both
\eqref{4F} is only a necessary (not a sufficent) condition on the
variables $\tt,x,y,z.$ The symmetrization process can introduce
solutions that arise when these quantities are not distinct, as in
\eqref{tt} below. Another problem is the ambiguity of sign in the
horizontal coordinate of $\sR,$ and this will result in our method
capturing solutions like that illustrated in Figure~8.

\sect{Conclusion}

We shall use the theory of Gr\"obner bases to analyse the ideal
\begin{equation}
I=\left<F_1,F_2,F_3,F_4\right>
\end{equation}
of the polynomial ring $\R[a,b,c,d].$ In view of Lemma~\ref{pi6}, we
are not interested in solutions to \eqref{4F} for which $\pm\rec$ is a
root of the polynomial
\begin{equation}\label{qr}
g(x)=x^4-ax^3+bx^2-cx+d.
\end{equation}
Equivalently we want solutions for which
\begin{equation}
\ba{rcl} G(a,b,c,d) &=& 81g(\frac1{\sqrt3})g(-\frac1{\sqrt3})\\[8pt]
&=& 1 - 3a^2 + 6b+ 9b^2 - 18ac- 27c^2 + 18d+ 54bd+ 81d^2
\ea\end{equation} is non-zero. Nor are we interested in solutions of
\eqref{4F} that give rise to a repeated root of \eqref{qr}, for these
can be ignored thanks to Lemma~\ref{LR}.

Using the notion of quotient ideal (see Cox \etal \cite[Chap.~4,
  \S4]{CLO}) we compute the quotient $I:\left<G\right>.$ This is done
by finding a Gr\"obner basis for
\begin{equation}
J=\left<uF_1,uF_2,uF_3,uF_4,(1-u)G\right>
\end{equation}
using a lexicographic ordering with the dummy variable $u$ first in
the dictionary.  Those basis elements that do not involve $u$ are
necessarily divisible by $G$ and provide a basis for the quotient. The
order of the remaining variables is also important, and we used the
\emph{Mathematica} command
$\mathtt{gb:=GroebnerBasis[J,\{u,a,c,b,d\}]}.$ The first element
$\mathtt{gb[[1]]/G}$ equals
\begin{equation}
-(d-1)^3(3 d-1)^3(3+b+3d)(9d-1)^3(1+3b+9d)^3(19+9b+27d),
\end{equation}
and thus we obtain

\begin{theorem}\label{key5} 
Let $\SS=\{[\z_i]\}$ be a SIC set satisfying \eqref{zz12}. Let
$t,x,y,z$ be the `vertical tangents' of $[\z_3],[\z_4],[\z_5],[\z_6],$
and define $b,d$ as in \eqref{esp}. If $\SS$ is not isometric to a
midpoint solution then one or more of the following equations must
hold:
\begin{equation}
d=1,\ d=\fr13,\ d=\fr19,
\ b=-(3d+3),\ b=-\fr13(9d+1),\ -\fr19(27d+19).
\end{equation}
\end{theorem}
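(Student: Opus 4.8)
The plan is to recast the hypothesis as a statement about a single point $(a,b,c,d)$ of the affine variety $V(I)$ lying off the hypersurface $G=0$, and then to read the conclusion off the one Gr\"obner basis element $\mathtt{gb[[1]]}$ displayed above. First I would clear away the degenerate possibilities. Assume that $\SS$ is not isometric to a midpoint solution. Then $[0,1,-1]\notin\SS$ by Lemma~\ref{pi2}, so by Lemma~\ref{Zst} each of $[\z_3],\dots,[\z_9]$ has the form $\z[\si,\phi]$ with $|\phi|<\pi/2$; in particular the vertical tangents $t,x,y,z$ are finite. Lemma~\ref{pi6} gives $\phi_i\ne\pm\pi/6$ for $i=3,\dots,6$, so none of $t,x,y,z$ equals $\pm\rec$; and Lemma~\ref{LR}, after normalising one phase using the residual $\U(1)$ of \eqref{U1}, shows that $t,x,y,z$ are pairwise distinct. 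Hence the quartic $g$ of \eqref{qr} has four simple roots, none equal to $\pm\rec$, so $G(a,b,c,d)\ne0$.

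Next I would place $(a,b,c,d)$ in $V(I)$. Since $\SS$ is a SIC set, any two of $[\z_1],\dots,[\z_6]$ are a distance $2\pi/3$ apart, so Theorem~\ref{fa} applied to each of the four triples drawn from $\{[\z_3],[\z_4],[\z_5],[\z_6]\}$ yields $F=0$; and because the factors $1+3(\cdot)(\cdot)$ in the denominator of \eqref{243} are all non-zero — once more because $\phi_i\ne\pm\pi/6$, cf.\ the discussion after \eqref{stxy} — the system \eqref{task} holds. As $t,x,y,z$ are distinct, every divided difference occurring in the definitions of $F_2,F_3,F_4$ is a genuine polynomial evaluated away from its pole, so the vanishing of \eqref{task} forces $F_1=F_2=F_3=F_4=0$; that is, $(a,b,c,d)\in V(I)$.

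Finally, $\mathtt{gb[[1]]}$ is a Gr\"obner basis element of $J$ free of the dummy variable $u$, hence it lies in the elimination ideal $J\cap\R[a,b,c,d]$, which by the construction of $J$ equals $I\cap\langle G\rangle$ and is in particular contained in $I$ (Cox \etal \cite{CLO}). Therefore $\mathtt{gb[[1]]}$ vanishes on all of $V(I)$, and in particular $\mathtt{gb[[1]]}(a,b,c,d)=0$. Since $G(a,b,c,d)\ne0$, dividing by $G$ gives $(\mathtt{gb[[1]]}/G)(a,b,c,d)=0$, and the displayed factorisation of $\mathtt{gb[[1]]}/G$ then shows that at least one of its six factors vanishes at $(a,b,c,d)$ — which is exactly the asserted list of equations.

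The serious work here is computational and is delegated to the machine: producing $F_1,\dots,F_4$ via \texttt{SymmetricReduction}, computing a Gr\"obner basis of $J$ in the lexicographic order with $u$ first and the remaining variables ordered $a,c,b,d$ so that the leading element factors as stated, and carrying the arithmetic faithfully throughout. The conceptual points that need care are that $G\ne0$ is precisely the algebraic translation of ``not reducible by Lemma~\ref{pi6}''; that the ordinary quotient $I:\langle G\rangle$ (rather than the saturation) is enough, since we use only the single generator $\mathtt{gb[[1]]}/G$ together with the trivial implication that a polynomial in $I$ vanishes on $V(I)$; and the exclusion of coincident vertical tangents, which is supplied by Lemma~\ref{LR}.
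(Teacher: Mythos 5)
Your proposal is correct and follows essentially the same route as the paper: translate the hypothesis into $(a,b,c,d)\in V(I)$ with $G(a,b,c,d)\ne0$ (using Lemmas~\ref{pi2}, \ref{pi6} and \ref{LR} to exclude the pinch point, the values $\pm\rec$, and coincident tangents, the last being what legitimises the divided differences in $F_2,F_3,F_4$), and then read the conclusion off the factorisation of $\mathtt{gb[[1]]}/G$ coming from the quotient-ideal computation. Your reconstruction in fact spells out the logical bookkeeping (why $G\ne0$, why all four $F_i$ vanish, why $\mathtt{gb[[1]]}\in I$) more explicitly than the paper does.
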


We shall examine each possibility in turn.

\smallbreak\noindent \textbf{Case (i).} $d=1/9.$ If $I'$ denotes the
ideal $I$ with $9d-1$ adjoined (in practice, we can merely set
$d=1/9$), one repeats the procedure to determine a basis of
$I':\left<G\right>.$ The new second element $\mathtt{gb[[2]]/G}$
equals
\[ (12b+8-27c^2)(3b+2)(3b+10)(9b+22).\]
First suppose that $b=(27c^2-8)/12.$ This leads to $a+3c=0$ and the
quartic \eqref{qr} has a pair of double roots
\begin{equation}
   x=\fr1{12}(-9c\pm\sqrt{48+81c^2}).
\end{equation}
Expressed more simply, the roots are 
\begin{equation}\label{tt}
x=t,\ t,\ -\fr1{3t},\ -\fr1{3t},
\end{equation}
and we can ignore this solution in view of Lemma~\ref{LR}.

If $b=-2/3$ we get $a=c=0$ and all the roots of \eqref{qr}
are $\pm\rec.$
If $b=-10/3$ we get $a=0$ and $c=\pm8/(3\sqrt3);$ one root
of \eqref{qr} is still $\pm\rec.$
If $b=-22/9,$ we have an instance of Case (iv) in which $a=0$ and
\begin{equation}\label{97}
   c=\pm\fr8{27}\sqrt{\smash{26\pm2\sqrt{97}}\vphantom{I^I}}.
\end{equation}
Provided we take a minus sign inside the square root, \eqref{qr} has
four distinct roots, and provides the `fake SIC set' discussed below
and illustrated in Figure~8.

\smallbreak\noindent\noindent\textbf{Case (ii).} Setting $1+3b+9d=0$
and re-evaluating the quotient ideal forces $d$ to equal one of
$1,1/3,1/9.$ The first leads to
\[a=0,\ b=-\fr{10}3,\ c=0,\ d=1,\]
giving roots of \eqref{qr} that are repeated and include
$\pm\rec.$  The case $d=1/3$ produces no new solutions.

\smallbreak\noindent\noindent\textbf{Case (iii).} $3+b+3d=0.$ This
leads to the solutions
\[a=\pm\fr8{\sqrt3},\ b=-6,\ c=0,\ d=1\]
and
\[a=\pm\fr8{\sqrt3},\ b=0,\ c=0,\ d=-1.\]
In the former case, $\pm\rec$ is still a root of \eqref{qr}.
In the latter case, the quartic has two non-real roots.

\smallbreak\noindent\noindent\textbf{Case (iv).} $19+9b+27d=0.$ This
is in some sense the generic case. It leads to 
\begin{equation}\label{144} \ba{r}
16+9a^2+27ac-144d=0,\hskip200pt\y 4194304 - 73728 a^2 - 132192 a^4 +
6561 a^6 - 4866048ac -746496 a^3c\\ + 78732 a^5 c - 8626176 c^2-
699840 a^2 c^2 + 354294 a^4 c^2 + 1679616 a c^3\\ + 708588 a^3 c^3 +
1889568 c^4 +531441 a^2 c^4=0, \ea\end{equation} giving rise to a
one-parameter family of solutions to \eqref{4F}. To describe this
family, we fix $\tt=\tan\th$ exactly as we did in the
figures of Section~\ref{Sym}. We set

\begin{equation}\label{a2p}
a=\tt+p,\ \ b=\tt p+q,\ \ c=\tt q+r,\ \ d=\tt r
\end{equation}
(the notation is as in Theorem~\ref{fa}), and compute a Gr\"obner
basis of the ideal $K$ generated by $19+9b+27d$ and the left-hand
sides of \eqref{144} in terms of $t.$ This can be accomplished with
the \emph{Mathematica} command\\ 
\hphantom{ooo}$\mathtt{GroebnerBasis[K,\{r,q,p\},
    CoefficientDomain\to RationalFunctions]}.$\\ Provided $t\ne0$ and
$|t|\ne\rec,$ the leading terms are $p,q,r^6.$ This means that the
non-leading monomials are $1,r,r^2,r^3,r^4,r^5,$ and that there exist
six solutions over $\C$ counting multiplicity \cite{Sturm}.

\begin{proof}[Completion of the proof of Theorem~\ref{strong}]
Let us first summarize the argument so far. The existence of six
correctly-separated points in $\CP^2,$ including the ones
$[\z_1],[\z_2]$ we fixed from the start of Section~\ref{2pH} onwards,
leads to a solution of the system \eqref{4F}. Lemma~\ref{pi6} allows
us to dispense with cases in which one root $t$ of \eqref{qr} (or, one
root of $x^3-px^2+qx-r=0$) equals $\pm\rec;$ such cases give rise to
SIC sets isometric to $\SSt.$

Theorem~\ref{key5} provides conditions for any extra solutions, and we
are led to focus on Case (iv), which does supply a family of solutions
to \eqref{4F}. We must show that these do not harbour an undetected
SIC set. In accordance with \eqref{U1}, we can assume that a third
point of $\SS$ equals $\z[0,\th]$ and apply Lemma~\ref{LR}. The
remaining six points of a SIC set would give rise to $\binom63=20$
solutions for each fixed $x.$ But Case (iv) provides at most six sets
of roots.
\end{proof}

We can now be certain that the solutions in Case (iv) are not SIC
sets. For any given rational value of $t,$ the solutions are roots of
polynomials whose coefficients are known exactly. Experimentally, the
number of real solutions varies from two to five according to the
following table:
\begin{equation*}
\ba{|c|c|}\hline
\hbox{real solutions} & \hbox{range}\\\hline\hline
2 & \hphantom{0.11<}|t|<0.1898\\\hline
3 & \quad 0.1899<|t|<0.4386\quad\\\hline
5 & 0.4387<|t|<\rec\\\hline
4 & \rec<|t|<1.1546\\\hline
3 & 1.1547<|t|\hphantom{<0.11}\\\hline
\ea
\end{equation*}

\noindent Although not SIC sets, these solutions validate \eqref{4F}
by virtue of `cross-field passes' of the type described below. Their
changing number as $|t|$ increases reflects the transitional nature of
the curves displayed in Figures~4 to 7.

\begin{exa}\label{fake}\rm
Take $(a,b,c,d)=(0,-22/9,c,1/9)$ where $c$ is given by \eqref{97} with
both minus signs. Then \eqref{qr} becomes
\begin{equation}
 27x^4-66x^2+8\sqrt{\smash{26-2\sqrt{97}}\vphantom{I^I}}x+3=0,
\end{equation}
and has four real roots, namely $x_3=t=-1.687\ldots$ and
\begin{equation}\label{456}
x=x_4=-0.109\ldots\quad y=x_5=0.442\ldots\quad z=x_6=1.354\ldots
\end{equation}

Let $\phi_i=\arctan x_i.$ Set $\si_3=0,$ but for $i=4,5,6$ choose
$\si_i>0$ such that $\z[\si_i,\phi_i]$ is correctly separated from
$\z[0,\phi_3].$ Then $[\z_1],[\z_2],\z[0,\phi_3]$ are
all a distance $2\pi/3$ from each of the six
points $\z[\pm\si_i,\phi_i]$ for $i=4,5,6.$ Moreover, the pairs 
\begin{equation}\label{last6}\ba{lll}
\z[\si_4,\phi_4],\ \z[\si_5,\phi_5]; &
\z[\si_4,\phi_4],\ \z[\si_6,\phi_6]; &
\z[\si_5,\phi_5],\ \z[-\si_6,\phi_6];\\[3pt]
\z[-\si_4,\phi_4],\z[-\si_5,\phi_5];\quad &
\z[-\si_4,\phi_4],\z[-\si_6,\phi_6];\quad &
\z[-\si_5,\phi_5],\ \z[\si_6,\phi_6] \ea\end{equation} are a distance
$2\pi/3$ apart. This does not contradict Lemma~\ref{LR} because
$\z[-\si_i,\phi_i]$ and $\z[\si_i,\phi_i]$ are not correctly
separated. All together, we have constructed nine points in $\CP^2$
for which 27 of the $\binom92$ pairs are correctly separated, though
the resulting configuration is less symmetrical than that of
Example~\ref{eigen}. The seven points $\z[\pm\si_i,\phi_i]$ are shown
in Figure~8; distinguishing a different root $x_3$ from the list
\eqref{456} would give a different picture of the same phenomenon.
\end{exa}

\begin{figure}[t]
\scalebox{1.3}{\includegraphics{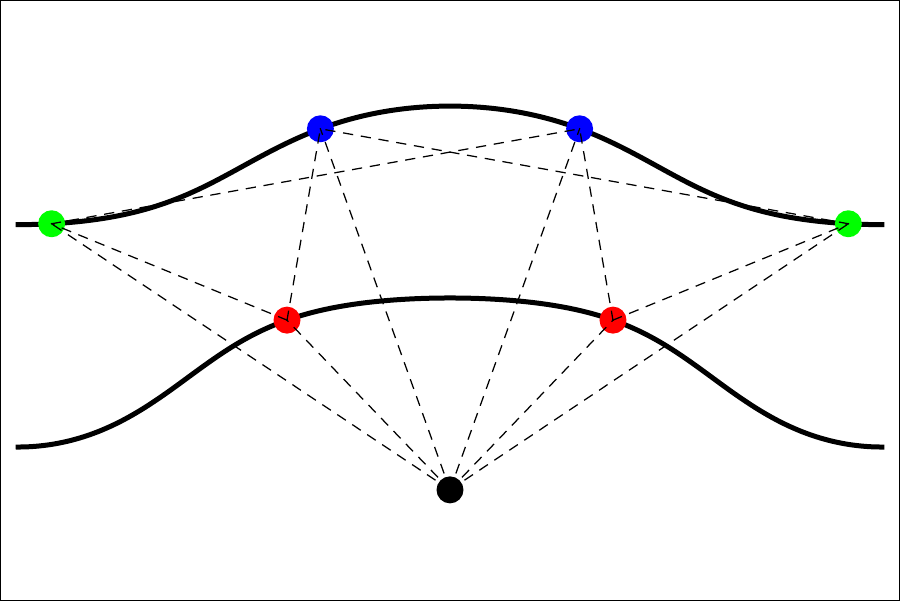}}
\caption{A representation of seven points in $\CP^2$ all a distance
  $2\pi/3$ from $[\z_1],[\z_2].$ Pairs joined by dashed edges are also
  $2\pi/3$ apart.}
\vskip20pt
\end{figure}

\vskip20pt
\normalsize
\section*{Acknowledgments} 

LPH acknowledges support from the Fields Institute, Ontario, the
Perimeter Institute, Ontario, and Tulane University, New Orleans, and
wishes to thank S.~Abramsky, D.~Appleby, R.~Blume-Kohout, D.~Brody,
H.~Brown, S.~Flammia, C.~Fuchs, L.~Hardy, and H.~Zhu for stimulating
discussions. SMS acknowledges support arising from visits to the
University of Nijmegen, the University of Sofia and the University of
Turin, and thanks N.~Lora Lamia for helpful comments. The authors are
grateful to J.~Armstrong for suggesting the use of quotient ideals and
their computation, which improved an earlier approach involving a
count of multiplicities of known solutions.

\bibliographystyle{plain}
\bibliography{hex_2015}

\vfill

\flushleft
\def\yo{\\[1pt]}

Lane Hughston\yo
Department of Mathematics, Brunel University London, Uxbridge UB8~3PH, UK\yo
Department of Mathematics, University College London, London WC1E~6BT,
UK\yo 

St Petersburg State University of Information Technologies, Mechanics
and Optics,\\ Kronwerkskii ave 49, 199034 St Petersburg, Russia\yo
{\it E-mail:} lane.hughston@brunel.ac.uk

\vskip10pt

Simon Salamon\yo 
Department of Mathematics, King's College London, Strand, London
WC2R~2LS, UK\yo 
{\it E-mail:} simon.salamon@kcl.ac.uk

\enddocument